\documentclass[reqno,a4paper]{amsart}

\usepackage{txfonts,amsmath,amssymb,bbm,amsthm,mathrsfs,enumitem,hyperref,color,comment,needspace}
\hypersetup{hidelinks,hypertexnames=false}
\numberwithin{equation}{section}

\newtheorem{theorem}{Theorem}[section]
\newtheorem{proposition}[theorem]{Proposition}
\newtheorem{lemma}[theorem]{Lemma}

\newtheorem{assumption}{Assumption}
\newtheorem{definition}{Definition}[section]

\newtheorem{corollary}{Corollary}[section]

\newcommand{\X}{\mathsf X}
\newcommand{\Y}{\mathsf Y}
\newcommand{\Z}{\mathsf Z}
\newcommand{\Fc}{\mathcal F}
\newcommand{\Pc}{\mathcal P}
\newcommand{\Fb}{\mathbb F}
\newcommand{\Prob}{\mathbb P}
\newcommand{\PathLaw}[1]{\mathbf{#1}}
\newcommand{\E}{\mathbb E}

\newcommand{\Ent}{H}
\newcommand{\FI}{I}
\newcommand{\PFI}{\mathcal I}
\newcommand{\MargEnt}{M}
\newcommand{\MargFI}{J}
\newcommand{\Diss}{\mathcal D}

\newcommand{\Wass}[1]{W_{#1}}
\newcommand{\TV}{d_{\mathrm{TV}}}
\newcommand{\Hell}{d_{\mathrm H}}

\newcommand{\Drift}{\mathsf b}
\newcommand{\RelScore}[2]{R_{#1\mid#2}}
\newcommand{\CondScore}[2]{m_{#1\mid#2}}

\newcommand{\Bridge}[2]{\bar\pi^{#1,#2}}

\DeclareMathOperator*{\argmin}{arg\,min}

\title{Initialization Stability and Convergence of Projected Langevin Dynamics}
\author{Qi Feng}
\address{Department of Mathematics, Florida State University}
\email{qfeng2@fsu.edu}

\author{Nathan Sauldubois}
\address{Department of Finance and Risk Engineering, New York University}
\email{ns6982@nyu.edu}

\author{Xin Zhang}
\address{Department of Finance and Risk Engineering, New York University}
\email{xz1662@nyu.edu}
\thanks{X. Zhang is partially supported by the NSF Grant DMS-2508556. N. Sauldubois and X. Zhang thank Daniel Lacker for helpful discussions.}

\date{}

\begin{document}

\begin{abstract}
The entropic regularized optimal transport (EOT) plays a fundamental role in the computation of optimal transport. In \cite{CLP}, the authors proposed a gradient flow approach to EOT, and derived a projected Langevin diffusion that converges in distribution to the solution of EOT when initialized with a coupling between the two given marginals.
In this paper, motivated by samplings of optimal couplings for EOT, we analyze the projected Langevin dynamics with arbitrary initial distribution, and prove that its long time behavior is the same as if it was initialized from a coupling. 
\end{abstract}

\maketitle

\section{Introduction}
Let $\X=\mathbb T^{d_x}$, $\Y=\mathbb T^{d_y}$ be flat tori of dimensional $d_x$ and $d_y$ respectively,  and
$\Z=\X\times\Y$. Given a lower semi-continuous cost function $c: \X \times \Y \to \mathbb R$ and marginal distributions $\mu \in \mathcal{P}(\X)$, $\nu \in \mathcal{P}(\Y)$, the entropic regularized optimal transport (EOT) seeks to solve the optimization problem 
\begin{align*}
    \inf_{P \in \Pi(\mu,\nu)}J(P), \qquad  J(P):=\int_{\Z} c \, dP+\epsilon H(P \mid \mu \otimes \nu), 
\end{align*}
where $\Pi(\mu,\nu)$ denotes the set of couplings between $\mu$ and $\nu$ and $H(P \mid \mu \otimes \nu)$ is the relative entropy from $P$ to $\mu \otimes \nu$. This problem admits a unique minimizer,
\begin{align*}
\pi = \argmin_{P \in \Pi(\mu,\nu)} J(P),
\end{align*}
and has been extensively studied due to its central role in the computation of optimal transport, particularly in connection with the Sinkhorn algorithm.

In \cite{CLP}, the authors proposed a gradient flow approach to EOT, viewing $\Pi(\mu,\nu)$ as a sub-Riemannian manifold of $(\mathcal{P}(\Z),W_2)$
Suppose $\mu,\nu$ have densities of the form  
\begin{align*}
    \mu(dx)= e^{-U(x)} \,dx, \qquad \nu(dy)=e^{-V(y)}\,dy.
\end{align*}
Under some assumptions on $U,V,c$, \cite{CLP}
formally derived a projected Langevin dynamic associated to the gradient flow
\begin{equation}\label{eq:dynamics}
\left\{
\begin{aligned}
 dX_t={}&\bigl(\E^{P_t}[\nabla_xc(X_t,Y_t)\vert X_t]
 -\nabla_xc(X_t,Y_t)-\epsilon\nabla U(X_t)\bigr) \, dt
 +\sqrt{2\epsilon}\,dW_t,\\
 dY_t={}&\bigl(\E^{P_t}[\nabla_yc(X_t,Y_t)\vert Y_t]
 -\nabla_yc(X_t,Y_t)-\epsilon\nabla V(Y_t)\bigr) \, dt
 +\sqrt{2\epsilon}\,dB_t,
\end{aligned}
\right.  \tag{PLD}
\end{equation}
where $P_t:=\mathcal{L}(X_t,Y_t)$ denotes the joint law of $X_t,Y_t$, and processes $(W_t)$, $(B_t)$ are independent Brownian motions of dimensions $d_x$
and $d_y$, respectively.

In \cite{CLP}, the authors established the well-posedness of \eqref{eq:dynamics} and the convergence $P_t \to \pi$ as $t \to \infty$ for $P_0 \in \Pi(\mu,\nu)$, leaving the case $P_0 \notin \Pi(\mu,\nu)$ open. Furthermore in data-driven applications, however, the marginals $\mu,\nu$, or equivalently the potentials $U,V$, are generally unknown, while the gradients $\nabla U,\nabla V$ can be estimated directly from data via score matching \cite{Hyvarinen2005,SongErmon2019}. Motivated by applications to sampling of optimal couplings for EOT, we therefore analyze the system \eqref{eq:dynamics} when only gradients $\nabla U, \nabla V$ are available. 

Here is our standing assumption and main result. Let $W_2$ denote the $2$-Wasserstein distance on $\mathcal{P}(\Z)$, and $\pi$ be the minimizer of EOT.

\begin{assumption}\label{ass:regularity}
There exists an $\alpha_0 \in (0,1)$ such that 
\[
 U\in C^{1,\alpha_0}(\X),\qquad
 V\in C^{1,\alpha_0}(\Y),\qquad
 c\in C^{1,\alpha_0}(\Z).
\]
\end{assumption}
\begin{theorem}
\label{thm:main}
Under Assumption~\ref{ass:regularity}, \eqref{eq:dynamics} admits a unique weak
solution for every $P_0\in\Pc(\Z)$. Moreover, there exist $r>0$ and $C_\tau<\infty$ for each $\tau>0$, such that every weak solution to \eqref{eq:dynamics}  satisfies
\begin{equation}\label{eq:main-wasserstein-bound}
\Ent(P_t\mid\pi)
\le C_\tau e^{-r(t-\tau)}\Wass{2}(P_0,\pi)^2,
\qquad t\ge\tau. \tag{Rate}
\end{equation}
Here $r$ depends only on $\X,\Y,c,U,V,\epsilon$, while $C_\tau$ additionally depends on $\tau$.
\end{theorem}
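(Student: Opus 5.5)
The plan is to split the dynamics into an autonomous marginal part and a "coupling" part. The starting observation is that the interaction drifts have zero conditional mean. Indeed $\E[\E^{P_t}[\nabla_xc\vert X_t]-\nabla_xc(X_t,Y_t)\mid X_t]=0$, so by the mimicking (Gyöngy) argument the $X$-marginal $\mu_t:=\mathcal L(X_t)$ solves the linear Fokker--Planck equation
\[
\partial_t\mu_t=\epsilon\,\nabla\cdot\bigl(\nabla\mu_t+\mu_t\nabla U\bigr),
\]
and similarly $\nu_t:=\mathcal L(Y_t)$ solves the Langevin Fokker--Planck equation for $\nu$. On a torus $U,V$ are bounded, so $\mu,\nu$ satisfy log-Sobolev inequalities by Holley--Stroock. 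Hence $\Ent(\mu_t\mid\mu)\le e^{-2\epsilon\rho(t-s)}\Ent(\mu_s\mid\mu)$, with the analogous bound for $\nu_t$. This settles the marginals completely, independently of the coupling.

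\textbf{Well-posedness.} I would treat \eqref{eq:dynamics} as a conditional McKean--Vlasov equation. Since $\nabla_x c,\nabla_y c$ are bounded and Hölder, the nonlinear drift $\E^{P}[\nabla_xc(X,Y)\mid X]$ is bounded by $\|\nabla c\|_\infty$ regardless of $P$. I would then use Girsanov against the reference diffusion with drift $-\epsilon\nabla U$, $-\epsilon\nabla V$ and run a Picard/contraction argument in relative entropy (or total variation) on path space over short time intervals, in the style of Jabin--Wang and Lacker. A bounded drift gives $\Ent(\PathLaw{P}\mid\PathLaw{Q})\lesssim\int_0^T\|\text{drift difference}\|^2_{L^2}dt$. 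I would control the difference of conditional expectations $\E^{P}[\cdot\mid X]-\E^{Q}[\cdot\mid X]$ in $L^2(P_X)$ by $\TV(P,Q)$, using that the marginals $P_X=Q_X=\mu_t$ agree; they agree because the marginal law is autonomous, as noted above. Uniqueness of the marginals is exactly what makes this closed. Iterating over intervals gives existence and uniqueness for every $P_0$.

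\textbf{Regularization on $[0,\tau]$.} The next step is to show $\Ent(P_\tau\mid\pi)\le C_\tau\Wass2(P_0,\pi)^2$. Freezing the drift along the solution, the process is a diffusion with bounded measurable drift, nondegenerate noise, and Hölder-continuous reference part. I would apply Wang's log-Harnack inequality, obtained via a coupling by change of measure that uses the bound $\|\Drift\|_\infty$. Combined with the fact that $\pi$ is invariant for \eqref{eq:dynamics} (shown in \cite{CLP}), this should give
\[
\Ent(P_\tau\mid\pi)\le \frac{C}{\tau}\Wass2(P_0,\pi)^2 .
\]
The subtlety is that the drifts for $P$ and $\pi$ differ through the conditional expectations. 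I would absorb this into the Girsanov term, bounding it by $\TV(P_s,\pi)^2$ and then by an entropy term that is closed with Gronwall on $[0,\tau]$. The same argument gives $\Ent(\mu_\tau\mid\mu)+\Ent(\nu_\tau\mid\nu)\le C_\tau\Wass2(P_0,\pi)^2$.

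\textbf{Long-time decay (main obstacle).} For $t\ge\tau$ I would compute $\frac{d}{dt}\Ent(P_t\mid\pi)$, writing $\pi=e^{(\varphi\oplus\psi-c)/\epsilon}\mu\otimes\nu$ with Schrödinger potentials. The result should be
\[
\frac{d}{dt}\Ent(P_t\mid\pi)=-\epsilon\,\PFI(P_t\mid\pi)+\mathcal E_t ,
\]
where $\PFI$ is the projected Fisher information appearing in \cite{CLP}, i.e.\ the Fisher information of the relative score after subtracting its conditional means. The error $\mathcal E_t$ arises only because $P_t\notin\Pi(\mu,\nu)$. I would bound it by Cauchy--Schwarz as
\[
|\mathcal E_t|\le \delta\,\PFI(P_t\mid\pi)+C_\delta\bigl(\FI(\mu_t\mid\mu)+\FI(\nu_t\mid\nu)\bigr).
\]
The hard step is a projected log-Sobolev inequality for non-couplings. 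In \cite{CLP} the inequality $\Ent(P\mid\pi)\le C\,\PFI(P\mid\pi)$ holds on $\Pi(\mu,\nu)$, and I would extend it to
\[
\Ent(P\mid\pi)\le C\,\PFI(P\mid\pi)+C\bigl(\Ent(P_X\mid\mu)+\Ent(P_Y\mid\nu)\bigr).
\]
To do this I would first try the chain rule $\Ent(P\mid\pi)=\Ent(P_X\mid\mu)+\E_{P_X}\Ent(P^{x}\mid\pi^{x})$ together with the uniform LSI for the conditionals $\pi^x,\pi^y$, which holds because $c,\psi$ are bounded. With this inequality, Gronwall plus the exponential decay of the marginal entropies gives $\Ent(P_t\mid\pi)\le C e^{-r(t-\tau)}\bigl(\Ent(P_\tau\mid\pi)+\Ent(\mu_\tau\mid\mu)+\Ent(\nu_\tau\mid\nu)\bigr)$. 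Integrating the marginal Fisher information terms in time is harmless, since $\int_s^\infty\FI(\mu_t\mid\mu)\,dt\le\Ent(\mu_s\mid\mu)/\epsilon$ decays exponentially. Taking $r$ slightly below the minimum of the marginal and projected rates and combining with the regularization step yields \eqref{eq:main-wasserstein-bound}.
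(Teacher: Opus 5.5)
\textbf{Main gap: the extended projected log-Sobolev inequality.} Your overall architecture matches the paper: autonomous marginals, entropy dissipation through $\PFI$ plus marginal error terms, and Gronwall against exponentially decaying marginal entropies. The crux, however, does not go through as proposed.

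The chain rule plus a uniform LSI (constant $\lambda_c$) for the conditionals $\pi^x$ gives $\int\Ent(P^x\mid\pi^x)\,dP_\X(x)\le(2\lambda_c)^{-1}\int|\nabla_yR|^2\,dP$, with $R=\log(dP/d\pi)$. The integral $\int|\nabla_yR|^2\,dP$ is the $\Y$-part of $\PFI(P\mid\pi)$ plus $\int|m_\Y|^2\,dP$, where
\[
 m_\Y:=\E^P[\nabla_yR\mid Y]=\nabla_y\log\frac{dP_\Y}{d\nu}+\frac1\epsilon\bigl(\E^{P}[\nabla_yc\mid Y]-\E^{\pi}[\nabla_yc\mid Y]\bigr).
\]
The squared $L^2(P)$ norm of the last term is controlled only through Pinsker and the chain rule, by $2\epsilon^{-2}\|\nabla_yc\|_\infty^2\Ent(P\mid\pi)$. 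You therefore arrive at
\[
 \Ent\le\MargEnt+(2\lambda_c)^{-1}\bigl(\PFI+2\MargFI+4\epsilon^{-2}\|\nabla_yc\|_\infty^2\,\Ent\bigr).
\]
This can be absorbed only under a weak-interaction condition $2\|\nabla_yc\|_\infty^2<\lambda_c\epsilon^2$, which is not assumed. The term is generically of the same size as $\Ent(P\mid\pi)$, even for $P\in\Pi(\mu,\nu)$ near $\pi$.

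Two further points. First, \cite{CLP} prove a projected LSI only for quadratic cost, so there is no coupling-case inequality to extend. Second, your bound $|\mathcal E_t|\le\delta\PFI+C_\delta\MargFI$ already presupposes the inequality, since the cross term $\int s_t\cdot q_t\,dP_t$ is naturally bounded by $\sqrt2G\sqrt{\Ent\,\MargFI}$.

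The paper replaces this with a non-quantitative argument in three pieces:
\begin{enumerate}
\item A projected LSI on $\Pi(Q_\X,Q_\Y)$ for every $Q$ with $\underline q\le q\le\overline q$ and $\|\nabla\log q\|_\infty\le L$, proved by contradiction and compactness (Proposition~\ref{prop:global-plsi}).
\item Local bridges $\Bridge{\mu_t}{\nu_t}=e^{a\oplus b}\pi$, built by the implicit function theorem in $C^{1,\sigma}$ (Lemmas~\ref{lem:bridge-smooth}, \ref{lem:fiber-projection}, Proposition~\ref{prop:bridge-stability}). Near $\pi$ they give $\Ent(P_t\mid\pi)=\Ent(P_t\mid\Bridge{\mu_t}{\nu_t})+\Ent(\Bridge{\mu_t}{\nu_t}\mid\pi)$, $\PFI(P_t\mid\pi)=\PFI(P_t\mid\Bridge{\mu_t}{\nu_t})$ and $\Ent(\Bridge{\mu_t}{\nu_t}\mid\pi)\le C\MargEnt(t)$.
\item $C^{1,\sigma}$-compactness of the positive-time densities together with $\Diss(P)=0\iff P=\pi$. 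This yields $\Diss\ge\kappa_s\Ent$ only along the flow for $t\ge s$ (Lemma~\ref{lem:compact-coercivity}). Because $\kappa_s$ depends on $s$, the paper fixes $s=1$ and handles $[\tau,1]$ by a short-time bound, so that $r$ is independent of $\tau$.
\end{enumerate}

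\textbf{The regularization step also fails as sketched.} The squared drift error $\E^{P_s}|\Drift[P_s]-\Drift[\pi]|^2$ is not bounded by $\TV(P_s,\pi)^2$. If $P_s$ differs from $\pi$ only through the conditional laws on an $x$-set of mass $\delta$, then $\TV\le\delta$ but the error is of order $\delta$. Only a bound linear in $\TV$, or $2G^2\Ent(P_s\mid\pi)$, holds.

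Moreover, entropy has no triangle inequality, so a log-Harnack term and a Girsanov term cannot simply be added. Any Girsanov comparison (say with the linear flow of drift $\Drift[\pi]$ from $P_0$) needs $\int_0^\tau\E^{P_s}|\Drift[P_s]-\Drift[\pi]|^2\,ds$. Near $s=0$ the available bounds on this integrand are either linear in $W=\Wass{1}(P_0,\pi)$ or quadratic but non-integrable (like $W^2/s$). So no Gronwall closes, and interpolating gives at best $W^2\log(1/W)$.

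The paper instead linearizes at the density level. Let $S_h$ be the semigroup of $\epsilon\Delta-\operatorname{div}(\Drift[\pi]\,\cdot)$. The flux bound $\|(\Drift[P_t]-\Drift[\pi])p_t\|_a\le C\|p_t-\pi\|_a$ is linear in $p_t-\pi$. It follows from identities such as $\mu_t(A_t-A_\pi)=\int_\Y(\nabla_xc-A_\pi)(p_t-\pi)\,dy$, where $A_t=\E^{P_t}[\nabla_xc\mid X]$ and $A_\pi=\E^{\pi}[\nabla_xc\mid X]$. Combining Duhamel's formula, this flux bound, and Henry--Gronwall with the integrable kernel $(t-s)^{-1/2}$ gives $\|p_t-\pi\|_1\le Ct^{-1/2}W$. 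A bootstrap to $L^2$ and $\Ent\le\chi^2\le(\min_\Z\pi)^{-1}\|p_\tau-\pi\|_2^2$ conclude (Proposition~\ref{prop:wasserstein-smoothing}).

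Finally, your Picard scheme for existence has a similar mismatch. The iterate $\Phi(Q)$ of the frozen equation does not have the autonomous marginals. Girsanov also measures the conditional-mean difference under $\Phi(Q_1)$ rather than $Q_1$, so no contraction follows, and $\TV$ control only yields the non-Lipschitz modulus $\sqrt{\Ent}$. The paper obtains existence from a fixed-point theorem on a weakly compact convex set with uniform parabolic density bounds. It uses the Girsanov--Gronwall argument, as you do, only for uniqueness.
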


The positive time $t \geq \tau$ in \eqref{eq:main-wasserstein-bound} is essential for
singular initialization: if $P_0\not\ll\pi$, weak convergence to $P_0$ and
lower semicontinuity of entropy imply
$\liminf_{t\downarrow0}\Ent(P_t\mid\pi)=+\infty$.
The argument has three main ingredients.  First, uniform ellipticity in \eqref{eq:dynamics} smooths the density at every positive time and yields a finite relative entropy bound in terms of the initial Wasserstein error.
Next, for probability measures $P,Q \in \mathcal{P}(\Z)$ with positive densities, denoting $R := \log\frac{dP}{dQ}$, the projected Fisher information is defined via
\begin{equation}\label{eq:projected-fisher}
 \PFI(P\mid Q)
 :=
 \E^P 
 \big[
 \vert\nabla_x R 
 -\E^{P} [\nabla_x R \mid X ] 
 \vert^2 
+
 \vert \nabla_y R 
 -\E^{P}  [ \nabla_y R \mid Y ]
 \vert^2
 \big]. \tag{PFI}
 \end{equation}
Let $\mu_t$ and $\nu_t$ denote the laws of $X_t$ and $Y_t$, respectively. The time derivative of $H(P_t \mid \pi)$ can be bounded in terms of the projected relative fisher information $\PFI(P_t \mid \pi)$ and the marginal deviations of  $\mu_t$ from $\mu$ and $\nu_t$ from $\nu$.  Finally, combining a local projected logarithmic Sobolev inequality, which bounds the relative entropy by the projected relative Fisher information, with compactness of the positive-time densities yields an exponential convergence rate.

\subsection{Related literature}

EOT is closed related to Schr\"odinger problems; see L\'eonard~\cite{Leonard2014} and
Nutz~\cite{Nutz2022} for nice surveys. The optimizer of EOT admits the form 
\begin{equation}\label{eq:pi-form}
\pi(dx,dy)=e^{\phi(x)+\psi(y)-  \frac{c(x,y)}{\epsilon}}\mu(dx)\nu(dy),
\end{equation}
with some potential functions $\phi,\psi$.
This structure also underlies Sinkhorn's algorithm. Cuturi~\cite{Cuturi2013}
demonstrates the computational advantages of entropic regularization
 for discrete optimal transport, while
Peyr\'e and Cuturi~\cite{PeyreCuturi2019} provide a broad treatment of
computational optimal transport and its applications. Benamou, Carlier,
Cuturi, Nenna, and Peyr\'e~\cite{BenamouEtAl2015} formulate regularized
transport problems as Kullback--Leibler projection problems and develop
iterative Bregman projection methods for their numerical solution. Stability of EOT with respect to marginal distributions and cost functions has also been studied extensively; e.g. 
Ghosal, Nutz, Bernton~\cite{GhosalNutzBernton2022}, Eckstein and Nutz~\cite{EcksteinNutz2022}, Nutz and Wiesel~\cite{NutzWiesel2023}. These static stability results
are relevant to our construction of entropy-minimizing bridges with
the evolving marginals of~\eqref{eq:dynamics} in Section~\ref{sec:local-static-estimates}.

Langevin diffusions provide a probabilistic realization of entropy
gradient flows. The law of the overdamped Langevin diffusion solves a
Fokker--Planck equation, whose interpretation as a Wasserstein gradient
flow of free energy is developed by Jordan, Kinderlehrer, and
Otto~\cite{JordanKinderlehrerOtto1998}. Relative entropy dissipates through
relative Fisher information, and a logarithmic Sobolev inequality yields
exponential convergence to equilibrium; see~\cite{BakryGentilLedoux2014}.
From the sampling perspective, Dalalyan~\cite{Dalalyan2017} and Durmus and
Moulines~\cite{DurmusMoulines2017} establish nonasymptotic convergence
bounds for discretizations of overdamped Langevin dynamics under
appropriate assumptions on the target density.

The closest work  of our paper is that of Conforti, Lacker, and Pal~\cite{CLP}, who
introduce projected Langevin dynamics and establish weak well-posedness
and convergence to the entropic optimal coupling for every initial law
in $\Pi(\mu,\nu)$, under suitable regularity and integrability assumptions
in the Euclidean setting.  For quadratic cost, they also
obtain exponential convergence through a projected logarithmic Sobolev
inequality.

Sauldubois and Zhang~\cite{SaulduboisZhang2026} study a related extension
to entropy-regularized weak optimal transport, where the cost may depend
nonlinearly on the conditional laws of a coupling. They
formally derive a gradient flow in adapted Wasserstein space by projecting
onto the set of couplings with prescribed marginals. Under suitable assumptions, they establish weak existence and uniqueness, and convergence to the entropic weak transport
optimizer in the adapted Wasserstein topology. 

Our argument also relates to entropy-dissipation methods for nonlinear
diffusion equations. Bayraktar, Feng, and Li~\cite{BFL} study
Vlasov--Fokker--Planck equations with weak self-consistency and obtain
exponential convergence through an auxiliary Fisher information
functional and conditions on the interaction kernel. The separation of conditional and marginal
contributions is also related in spirit to the two-scale method of
Grunewald, Otto, Villani, and Westdickenberg~\cite{GOVW2009}, which combines
functional inequalities for conditional measures and a coarse-grained
measure.

\subsection{Organization of the paper}
Section~\ref{sec:preliminaries} establishes weak existence and uniqueness of \eqref{eq:dynamics},  and regularity of density for positive time.  It concludes
with the Wasserstein-to-entropy estimate.
Section~\ref{sec:local-static-estimates}  proves a local
projected logarithmic Sobolev inequality.
Section~\ref{sec:convergence-rate} derives the marginal and joint entropy
identities and 
completes the proof of Theorem~\ref{thm:main}.

\Needspace{7\baselineskip}
\subsection{Notation}
\label{sec:notation}
The spaces $\X,\Y,\Z$ are equipped with their flat metrics and volume measures $dx$, $dy$, and $dz=dx\,dy$. We write
$d_\Z$ for the product geodesic distance. For $z=(x,y)$, $\nabla=(\nabla_x,\nabla_y)$ and
$\Delta=\Delta_x+\Delta_y$ denote the spatial gradient and Laplacian; divergence is also
spatial. The components of a vector field $v$ are denoted by $v_\X,v_\Y$, and $|v|$ is its
Euclidean norm.

For a measurable space $E$, $\Pc(E)$ is the set of probability measures on $E$. We use $P,Q$ for
spatial laws and $p,q$ for their densities with respect to volume measure, when these exist. In
spatial derivatives and function norms, a measure may be identified with its density. We write
$\E^{P}[f]=\int f\,dP$. For probability measures $\alpha,\beta$, $\Pi(\alpha,\beta)$ denotes
their couplings, that is, probability measures on the product space with marginals
$\alpha,\beta$. The relative entropy from $\alpha$ to $\beta$ is defined via 
\begin{align}\label{eq:relative-entropy}
H(\alpha \mid \beta)= \int \log \frac{d\alpha}{d \beta} \,d\alpha, \quad \text{if } \alpha \ll \beta,
\end{align}
and $H(\alpha \mid \beta)= +\infty$ if $\alpha \not \ll \beta$.

On $\Z$, the coordinate maps are $X(x,y)=x$ and $Y(x,y)=y$. The marginals and conditional laws satisfy
\begin{equation}
 P_\X =P\circ X^{-1},\qquad P_\Y=P\circ Y^{-1},
 \quad \text{and}  \quad
 P(dx,dy) =P_\X(dx)P_{\Y\mid x}(dy).
 \label{eq:conditional-laws}
\end{equation}
Here $P_{\Y\mid x}$ is a Borel probability kernel; $P_{\X\mid y}$ is defined
similarly. Conditional expectations are taken in Borel versions, defined up to the relevant
marginal null sets. See Proposition~5.1 of~\cite{BrunickShreve2013}
(or Proposition~3.3 of~\cite{CLP}) for jointly Borel versions.

For $E\in\{\X,\Y,\Z\}$ and $0<\theta<1$, $C^{1,\theta}(E)$ denotes the space of continuously
differentiable functions with $\theta$-H\"older gradient, with norm
$\|f\|_{C^{1,\theta}}=\|f\|_\infty+\|\nabla f\|_\infty+[\nabla f]_\theta$, where
$[g]_\theta=\sup_{z\ne z'}|g(z)-g(z')|/d_E(z,z')^\theta$. Let $C^{(1+\theta)/2,1+\theta}$ denote the corresponding parabolic H\"older space,
with time exponent $(1+\theta)/2$ and spatial exponent $1+\theta$. The same notation is used for
vector-valued functions, and domains are omitted when they are determined by the context.

For $1\le p<\infty$, $L^p(Q)$ has norm $\|f\|_{L^p(Q)}=(\int|f|^p\,dQ)^{1/p}$; $L^\infty(Q)$
uses the essential supremum. $L^p$ norms are defined using volume measure.

If $Q$ has a positive continuous density, $H^1(Q)$ consists of functions with weak gradient and
finite squared norm $\|f\|_{H^1(Q)}^2=\int(|f|^2+|\nabla f|^2)\,dQ$. Compactness then makes
$H^1(Q)$ and the usual $H^1$ equal as sets, with equivalent norms.

For $P,Q\in\Pc(\Z)$ and $p=1,2$, the $p$-Wasserstein distance is defined by
\begin{equation}
 \Wass{p}(P,Q)^p=\inf_{\gamma\in\Pi(P,Q)}\int_{\Z\times\Z}d_\Z(z,z')^p\,d\gamma(z,z').
 \label{eq:wasserstein-distances}
\end{equation}
Here the infimum runs over all transport plans with marginals $P,Q$. These distances
are finite for all probability measures on the compact space $\Z$, so no moment restriction
is needed.

For a weak solution to \eqref{eq:dynamics}, let $P_t=\mathcal{L}(X_t,Y_t)$, $p_t$ denote its density at $t>0$, and
$\mu_t=(P_t)_\X$, $\nu_t=(P_t)_\Y$.

For a distribution $P \in \mathcal{P}(\Z)$, the SDE drift has components
\begin{equation}
 \begin{aligned}
 \Drift_\X[P](x,y)&:=\E^{P}[\nabla_xc\vert X=x]-\nabla_xc(x,y)-\epsilon\nabla U(x),\\
 \Drift_\Y[P](x,y)&:=\E^{P}[\nabla_yc\vert Y=y]-\nabla_yc(x,y)-\epsilon\nabla V(y).
 \end{aligned}
 \label{eq:drift-definition}
\end{equation}
Thus the Fokker-Planck equation to \eqref{eq:dynamics} is given by $$\partial_t p_t=\epsilon\Delta p_t-\nabla\!\cdot(p_t\Drift[P_t]).$$ For distributions $P,Q \in \mathcal{P}(\Z)$ with positive regular densities, denote 
\begin{equation}
 \begin{aligned}
 \RelScore{P}{Q}&:=\log(dP/dQ),\\
 \CondScore{P}{Q}&:=(\E^{P}[\nabla_x\RelScore{P}{Q}\vert X],\E^{P}[\nabla_y\RelScore{P}{Q}\vert Y]).
 \end{aligned}
 \label{eq:relative-score-fields}
\end{equation}

\section{Well-posedness and regularization}
\label{sec:preliminaries}

In this section, we establish several preliminary results that will be needed for the analysis of stability with respect to the initial condition. We first prove the well-posedness of equation \eqref{eq:dynamics}, in the sense of \cite[Definition 3.1]{CLP}. We recall the definition for completeness.

\begin{definition}
Let $P_0 \in \Pc(\X\times\Y)$. A weak solution of equation \eqref{eq:dynamics} consists of a filtered probability space
\[
(\Omega,\Fc,\Fb, \Prob),
\]
supporting independent Brownian motions $W$ and $B$ of dimensions $d_x$ and $d_y$, respectively, together with two $\Fb$-adapted continuous processes $X$ and $Y$ with values in $\X$ and $\Y$, respectively, and two $\Fb$-progressively measurable processes $\hat A$ and $\hat B$ with values in $\mathbb R^{d_x}$ and $\mathbb R^{d_y}$, respectively, such that:
\begin{enumerate}[label=(\roman*), ref=(\roman*)]

\item
\(
(X_0,Y_0)\sim P_0
\).

\item \label{weak-sol:MV-a:int}
There exists $p>1$ such that, for every $T>0$,
\[
\int_0^T
\E^{\Prob}\left[
\left|
\hat A_t
-
\nabla_x c(X_t,Y_t)
-
\epsilon \nabla_x U(X_t)
\right|^p
+
\left|
\hat B_t
-
\nabla_y c(X_t,Y_t)
- 
\epsilon\nabla_y V(Y_t)
\right|^p
\right]
\,dt
<\infty.
\]

\item \label{weak-sol:MV-a:sde}
The following system of SDEs holds $\Prob$-almost surely:
\begin{equation*}
\left\{
\begin{aligned}
dX_t
&=
\bigl(
\hat A_t
-\nabla_x c(X_t,Y_t)
-\epsilon \nabla U(X_t)
\bigr)\,dt
+\sqrt{2\epsilon}\,dW_t,
\\
dY_t
&=
\bigl(
\hat B_t
-\nabla_y c(X_t,Y_t)
-\epsilon \nabla V(Y_t)
\bigr)\,dt
+\sqrt{2\epsilon}\,dB_t.
\end{aligned}
\right.
\end{equation*}

\item
For almost every $t\geq 0$,
\[
\hat A_t
=
\E^{\Prob}[\nabla_xc(X_t,Y_t)\vert X_t],
\qquad
\hat B_t
=
\E^{\Prob}[\nabla_yc(X_t,Y_t)\vert Y_t],
\qquad \Prob\text{-a.s.}
\]
\end{enumerate}
\end{definition}

Uniqueness is understood in law: for a given initial distribution, any two weak solutions have the same law as stochastic processes.

Once weak existence for the McKean--Vlasov SDE has been established, an important difference from the setting of \cite{CLP} must be taken into account. In their setting, the associated flow of probability measures satisfies
\[
P_t\in\Pi(\mu,\nu)
\qquad\text{for all }t\geq 0.
\]
This property is no longer true in our setting.

\subsection{Weak existence and uniqueness}

The proof of weak existence and uniqueness is very similar to that of \cite{CLP}; see in particular the proof of Proposition $3.8$ therein. We follow the same line of argument and provide a condensed proof for completeness.

For $T>0$, we use the path space and time-marginal convention
\begin{equation}
 \begin{aligned}
 \Omega_T&:=C([0,T],\Z),\qquad P_t:=\PathLaw{P}\circ(X_t,Y_t)^{-1},\\
 \PathLaw{P}^t&:=\PathLaw{P}\circ(\omega\mapsto\omega\vert_{[0,t]})^{-1},\qquad 0\le t\le T.
 \end{aligned}
 \label{eq:path-law-conventions}
\end{equation}
Here $X_t,Y_t$ are the coordinate processes, $\PathLaw{P}$ is a path law, and $\PathLaw{P}^t$ is its restriction to $[0,t]$.

\begin{proposition}
\label{prop:well-posedness}
Under Assumption~\ref{ass:regularity}, for every
$P_0\in\Pc(\Z)$, equation \eqref{eq:dynamics} admits a unique weak solution.
\end{proposition}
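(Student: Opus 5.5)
The drift of \eqref{eq:dynamics} is bounded, and the only nonlinearity sits in the conditional expectations $\E^{P}[\nabla_x c\mid X]$ and $\E^{P}[\nabla_y c\mid Y]$. Since these are bounded by $\|\nabla c\|_\infty$, I would follow the strategy of \cite[Proposition~3.8]{CLP}. The plan is to obtain existence by a fixed-point argument on path laws, controlled by relative entropy via Girsanov, and to read off uniqueness from the same contraction estimate. Compactness of $\Z$ and boundedness of $\nabla c,\nabla U,\nabla V$ (Assumption~\ref{ass:regularity}) remove all the integrability issues that \cite{CLP} had to handle.

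\textbf{Step 1: the frozen problem.} Fix $T>0$ and a path law $\PathLaw{Q}\in\Pc(\Omega_T)$. Let $\Phi(\PathLaw{Q})$ be the law of the solution of the SDE with drift $\Drift[Q_t](X_t,Y_t)$, started from $P_0$ and driven by the Brownian motions. I take Borel versions of the conditional expectations, jointly measurable in $(t,x)$ and $(t,y)$, as in \cite[Proposition~5.1]{BrunickShreve2013}. This drift is a bounded measurable function of $(t,X_t,Y_t)$ with noise $\sqrt{2\epsilon}$ times the identity. By Girsanov, weak existence and uniqueness in law follow from the reference law, namely Brownian motion on the torus started at $P_0$. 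Any weak solution of \eqref{eq:dynamics} is exactly a fixed point $\PathLaw{P}=\Phi(\PathLaw{P})$, and condition~\ref{weak-sol:MV-a:int} holds trivially because every term is bounded.

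\textbf{Step 2: entropy contraction.} For two path laws $\PathLaw{Q},\PathLaw{Q}'$, Girsanov's theorem gives
\[
H\bigl(\Phi(\PathLaw{Q})^t\mid\Phi(\PathLaw{Q}')^t\bigr)=\frac1{4\epsilon}\int_0^t\E^{\Phi(\PathLaw{Q})}\bigl[|\Drift[Q_s]-\Drift[Q'_s]|^2(X_s,Y_s)\bigr]\,ds .
\]
The difference of drifts involves only $\E^{Q_s}[\nabla_xc\mid X=x]-\E^{Q'_s}[\nabla_xc\mid X=x]$ and its $y$-analogue. These are bounded by $2\|\nabla c\|_\infty$, but the pointwise difference of conditional expectations is not controlled by $\TV(Q_s,Q'_s)$ alone. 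This is the main obstacle.

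I would handle it as \cite{CLP} does, working with the fixed-point map for the conditional drifts $\hat A_t=a(t,X_t)$, $\hat B_t=b(t,Y_t)$ rather than for laws. For a bounded measurable pair $(a,b)$, let $\PathLaw{P}^{a,b}$ be the law of the linear SDE with these inputs. Define
\[
\Psi(a,b)(t,\cdot)=\bigl(\E^{P^{a,b}_t}[\nabla_xc\mid X=\cdot],\;\E^{P^{a,b}_t}[\nabla_yc\mid Y=\cdot]\bigr).
\]
Differences of conditional expectations would be weighted by the marginal density; the key estimate should be of the form
\[
\E^{P}\bigl[|\E^{P}[f\mid X]-\E^{P'}[f\mid X]|^2\bigr]\le C\|f\|_\infty^2\,H(P\mid P').
\]
I would prove it via the chain rule for entropy: $\E^P[f\mid X=x]-\E^{P'}[f\mid X=x]$ is a difference of integrals against the conditional kernels $P_{\Y\mid x}$ and $P'_{\Y\mid x}$. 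Pinsker's inequality bounds it by $\|f\|_\infty\sqrt{2H(P_{\Y\mid x}\mid P'_{\Y\mid x})}$, and integrating over $P_\X$ gives at most $H(P\mid P')$.

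The Girsanov computation also needs an expectation under a \emph{single} measure. I would use $\Phi(\PathLaw{Q})$ as the common measure and compare the two frozen laws each against $\PathLaw{P}^{\Psi}$, an intermediate law with a shared drift. This gives an integral inequality of the form $h(t)\le C\int_0^t h(s)\,ds$ for $h(t)=H(\PathLaw{P}^t\mid\PathLaw{P}'^t)$. Gronwall's inequality then yields uniqueness directly: two solutions with the same $P_0$ have $h\equiv0$.

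\textbf{Step 3: existence.} The same estimate shows that Picard iterates $\PathLaw{P}^{n+1}=\Phi(\PathLaw{P}^n)$ satisfy $h_{n+1}(t)\le C\int_0^th_n$. Hence $h_n(T)\le (CT)^n/n!$, and the iterates are Cauchy in total variation by Pinsker's inequality. Their limit $\PathLaw{P}$ is a fixed point: total variation convergence of the time marginals passes to the conditional expectations in $L^1$ by the estimate in Step~2, and the Girsanov densities converge. Consistency across $T$ gives a solution on $[0,\infty)$.

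Should the direct comparison in Step~2 fail, as a fallback I would first obtain existence by Schauder's theorem on the convex compact set of bounded drift pairs in a weak-$*$ topology, using continuity in $L^1$ of $(a,b)\mapsto\Psi(a,b)$ together with the positive-time smoothing of the densities. I would keep Gronwall only for uniqueness.
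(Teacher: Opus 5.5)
Your uniqueness argument is correct and is essentially the paper's: Girsanov between two solutions, Pinsker on the conditional kernels, the entropy chain rule, data processing, and Gronwall. For two solutions the Girsanov expectation is taken under $\PathLaw{P}_1$ itself. Its time marginal $P_{1,s}$ is exactly the law you condition under, so your estimate $\E^{P}\bigl[|\E^{P}[f\mid X]-\E^{P'}[f\mid X]|^2\bigr]\le 2\|f\|_\infty^2\Ent(P\mid P')$ applies directly, and no intermediate law is needed. That estimate uses only the inequality $\int\Ent(P_{\Y\mid x}\mid P'_{\Y\mid x})\,dP_\X\le\Ent(P\mid P')$. You therefore do not need Lemma~\ref{lem:autonomous-marginals}, which the paper invokes to turn this into an equality.

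The gap is the existence argument of Step~3. For Picard iterates, Girsanov gives
\[
\Ent\bigl((\PathLaw{P}^{n+2})^t\mid(\PathLaw{P}^{n+1})^t\bigr)=\frac1{4\epsilon}\int_0^t\E^{P^{n+2}_s}\bigl[|\Drift[P^{n+1}_s]-\Drift[P^{n}_s]|^2\bigr]\,ds .
\]
Here the difference $\E^{P^{n+1}_s}[\nabla_xc\mid X]-\E^{P^{n}_s}[\nabla_xc\mid X]$ is integrated against $(P^{n+2}_s)_\X$. Your estimate controls it only in $L^2\bigl((P^{n+1}_s)_\X\bigr)$, and these marginals differ. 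The $\X$-marginal of a frozen law $\Phi(\PathLaw{Q})$ is driven by $\E^{Q_t}[\nabla_xc\mid X]-\E^{\Phi(\PathLaw{Q})_t}[\nabla_xc\mid X]-\epsilon\nabla U$, which reduces to the autonomous marginal equation only at a fixed point.

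As you note yourself, the pointwise difference of conditional expectations is not controlled by any divergence of the joint laws. Closing $h_{n+1}\le C\int_0^t h_n$ would therefore require a bound on $d(P^{n+2}_s)_\X/d(P^{n+1}_s)_\X$ that is uniform down to $s=0$. You do not supply this, and it does not follow from two-sided Aronson bounds: their Gaussian exponents differ, so the ratio bound degenerates as $s\downarrow0$ for singular $P_0$. Relative entropy has no triangle inequality, so comparing both frozen laws with an intermediate $\PathLaw{P}^{\Psi}$ does not repair this. The Cauchy property of the iterates, and hence existence, is unproven as written.

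Your fallback is the right route and is essentially what the paper does. It is a Schauder--Tychonoff fixed point for the frozen map. Continuity comes from uniform H\"older bounds and positive lower bounds on the densities on $[\delta,T]\times\Z$. These turn weak convergence into uniform convergence of densities, hence total-variation convergence of the time marginals and $L^2$ convergence of the conditional expectations.

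The paper runs this on path laws. It works in the $L^2(\PathLaw{Q}^0)$-weakly compact convex hull of $\Phi(\mathcal K^{\mathrm{fp}}_0)$, built from the uniform Girsanov $L^q$ bounds, and gets continuity from the Girsanov entropy bound. Your version on the weak-$*$ compact convex ball of drift pairs bounded by $K_c$ gets invariance for free. You must then show that weak-$*$ convergence of drifts yields convergence of the densities. This needs uniform H\"older bounds, passage to the limit in the Fokker--Planck equation, and uniqueness for the linear equation with bounded drift to identify the limit. Existence should rest on this argument, not on the Picard iteration.
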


\begin{proof}
Fix $T>0$. On $\Omega_T$, define
\begin{equation}\label{eq:cost-gradient-bound}
 c_x:=\nabla_xc,\quad c_y:=\nabla_yc,\quad K_c:=\max(\|c_x\|_\infty,\|c_y\|_\infty)<\infty,
\end{equation}
where boundedness follows from Assumption~\ref{ass:regularity} and compactness. Let $\PathLaw{Q}^0$ be Wiener measure with initial law $P_0$ and diffusion coefficient $\epsilon$.

For $\PathLaw{Q}\in\Pc(\Omega_T)$ with time marginals $Q_t$, Proposition~5.1 of~\cite{BrunickShreve2013} (or Proposition~3.3 of~\cite{CLP}) gives jointly Borel versions of $\E^{Q_t}[c_x\vert X=x]$ and $\E^{Q_t}[c_y\vert Y=y]$, with values in $\mathbb R^{d_x}$ and $\mathbb R^{d_y}$, respectively. Denote by $\Phi(\PathLaw{Q})$ the path law of the frozen equation
\begin{equation}
 \left\{
 \begin{aligned}
 dX_t&=(\E^{Q_t}[c_x\vert X=X_t]-c_x(X_t,Y_t)-\epsilon\nabla U(X_t))\,dt
       +\sqrt{2\epsilon}\,dW_t,\\
 dY_t&=(\E^{Q_t}[c_y\vert Y=Y_t]-c_y(X_t,Y_t)-\epsilon\nabla V(Y_t))\,dt
       +\sqrt{2\epsilon}\,dB_t,
 \end{aligned}
 \right.
 \label{eq:frozen-equation}
\end{equation}
starting from $P_0$. Its drift $\Drift[Q_t]$, defined in~\eqref{eq:drift-definition}, is bounded uniformly in $\PathLaw{Q}$ by~\eqref{eq:cost-gradient-bound} and the bounds on $\nabla U,\nabla V$. Hence, Girsanov's theorem gives weak well-posedness and, for every $q<\infty$,
\begin{equation}
 K_q^{\mathrm{Gir}}:=\sup_{\PathLaw{Q}\in\Pc(\Omega_T)}
 \left\|\frac{d\Phi(\PathLaw{Q})}{d\PathLaw{Q}^0}\right\|_{L^q(\PathLaw{Q}^0)}<\infty.
 \label{eq:girsanov-uniform-bound}
\end{equation}

Let us set
\begin{equation}
 \mathcal K^{\mathrm{fp}}_0=\left\{\PathLaw{Q}:\frac{d\PathLaw{Q}}{d\PathLaw{Q}^0}\in L^2(\PathLaw{Q}^0),\
 \left\|\frac{d\PathLaw{Q}}{d\PathLaw{Q}^0}\right\|_2\le K_2^{\mathrm{Gir}}\right\},
 \label{eq:fixed-point-class}
\end{equation}
and let $\mathcal K^{\mathrm{fp}}$ be the closed convex hull of $\Phi(\mathcal K^{\mathrm{fp}}_0)$. The $L^2$ weak compactness of $\mathcal K^{\mathrm{fp}}_0$ and~\eqref{eq:girsanov-uniform-bound} show that $\mathcal K^{\mathrm{fp}}$ is weakly compact and convex, $\mathcal K^{\mathrm{fp}}\subset\mathcal K^{\mathrm{fp}}_0$, and $\Phi(\mathcal K^{\mathrm{fp}})\subset\mathcal K^{\mathrm{fp}}$.

All equations have the same nondegenerate diffusion and a uniformly bounded drift. Standard H\"older estimates, as in Theorem~6.2.2 of~\cite{BogachevKrylovRocknerShaposhnikov2015}, give uniform upper and H\"older bounds on $[\delta,T]\times\Z$. Writing $p_t^{\Phi(\PathLaw{Q})}$ for the time-marginal density, for some $\theta\in(0,1)$,
$$
 |p_t^{\Phi(\PathLaw{Q})}(z)-p_{t'}^{\Phi(\PathLaw{Q})}(z')|
 \le C_\delta\bigl(|t-t'|^{\theta/2}+d_\Z(z,z')^\theta\bigr),\qquad \delta\le t,t'\le T,\quad z,z'\in\Z.
$$
These bounds, and the corresponding positive lower bounds, are preserved by convex combinations and weak limits.

Thus, if $\PathLaw{Q}^n\to\PathLaw{Q}$ weakly in $\mathcal K^{\mathrm{fp}}$ from~\eqref{eq:fixed-point-class}, their time-marginal densities converge uniformly on $[\delta,T]\times\Z$; in particular $Q_t^n\to Q_t$ in total variation for each $t>0$. Consequently, as in Proposition~3.4 of~\cite{CLP},
\begin{equation*}
 \E^{Q_t^n}[c_x\vert X]\longrightarrow\E^{Q_t}[c_x\vert X]\quad\text{in }L^2(Q_t),\qquad\text{for a.e. }t>0.
\end{equation*}
The same holds for $c_y$ conditioned on $Y$. The density bounds also give convergence in $L^2((\Phi(\PathLaw{Q}))_t)$ on $[\delta,T]$. By Girsanov's theorem, with entropy~\eqref{eq:relative-entropy},
$$
 \Ent(\Phi(\PathLaw{Q})\mid\Phi(\PathLaw{Q}^n))
 \le\frac1{4\epsilon}\int_0^T
 \E^{\Phi(\PathLaw{Q})}\!\left[
 |\Drift[Q_t](X_t,Y_t)-\Drift[Q_t^n](X_t,Y_t)|^2\right]dt.
$$
The right hand side converges toward $0$ by dominated convergence.
Thus the map $\Phi$ defined by~\eqref{eq:frozen-equation} is weakly continuous on $\mathcal K^{\mathrm{fp}}$, and the fixed-point theorem yields weak existence.
\end{proof}

For the uniqueness, we need the following lemma.

\begin{lemma}\label{lem:autonomous-marginals}
    Let $\PathLaw{P}$ be the path law of a weak solution of \eqref{eq:dynamics}. Its time marginals $P_t$ have densities $p_t$ for $t>0$, as do their marginal laws $\mu_t,\nu_t$.
    More precisely, if $\mu_0$ and $\nu_0$ denote the marginals of $P_0$,
then the marginals $(P_t)_\X = \mu_t$ and $(P_t)_\Y = \nu_t$ of $P_t$ are given by the unique solution of
\begin{equation}
\begin{aligned}
 \partial_t\mu_t
 &=\epsilon\Delta_x\mu_t
   +\epsilon\nabla_x\!\cdot(\mu_t\nabla U),
 &\mu_{t=0}&=\mu_0,\\
 \partial_t\nu_t
 &=\epsilon\Delta_y\nu_t
   +\epsilon\nabla_y\!\cdot(\nu_t\nabla V),
 &\nu_{t=0}&=\nu_0.
\end{aligned}
 \label{eq:autonomous-marginals-existence}
\end{equation}
\end{lemma}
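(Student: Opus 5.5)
The plan is to derive the marginal equations directly from the SDE by testing against functions of $x$ alone (respectively $y$ alone). The key point is that the conditional-expectation term in the drift is exactly what cancels the $c$-dependence once we integrate out $y$.

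\emph{Step 1: existence of densities.} For $t>0$ we use that the drift $\hat A_t - c_x(X_t,Y_t)-\epsilon\nabla U(X_t)$ is bounded. This holds because $\hat A_t$ is a conditional expectation of $c_x$, so $|\hat A_t|\le K_c$ a.s. for a.e. $t$, with $K_c$ as in \eqref{eq:cost-gradient-bound}. The same bound holds for the $Y$-component. Girsanov's theorem then makes the path law $\PathLaw{P}$ on $[0,T]$ absolutely continuous with respect to the Wiener measure $\PathLaw{Q}^0$ with initial law $P_0$. The density $d\PathLaw{P}/d\PathLaw{Q}^0$ lies in every $L^q$, exactly as in \eqref{eq:girsanov-uniform-bound}. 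Since the time-$t$ marginal of $\PathLaw{Q}^0$ is $P_0$ convolved with the heat kernel on the torus, it has a smooth, strictly positive density for $t>0$. Absolute continuity therefore transfers, so $P_t$ has a density $p_t$. Its marginals $\mu_t,\nu_t$ then have densities obtained by integrating $p_t$.

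\emph{Step 2: the marginal equation.} Take $f\in C^2(\X)$ and apply It\^o's formula to $f(X_t)$, then take expectations; the martingale term vanishes because $\nabla f$ is bounded. This gives
\begin{equation*}
\frac{d}{dt}\E[f(X_t)]=\E\bigl[\nabla f(X_t)\cdot(\hat A_t-c_x(X_t,Y_t)-\epsilon\nabla U(X_t))\bigr]+\epsilon\E[\Delta f(X_t)],
\end{equation*}
in integrated form. By condition (iv) of the definition, $\hat A_t=\E[c_x(X_t,Y_t)\mid X_t]$ for a.e. $t$. Since $\nabla f(X_t)$ is $X_t$-measurable and bounded, the tower property gives $\E[\nabla f(X_t)\cdot \hat A_t]=\E[\nabla f(X_t)\cdot c_x(X_t,Y_t)]$, and the two $c_x$ terms cancel. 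What remains is
\begin{equation*}
\int f\,d\mu_t=\int f\,d\mu_0+\int_0^t\int \bigl(\epsilon\Delta f-\epsilon\nabla U\cdot\nabla f\bigr)\,d\mu_s\,ds .
\end{equation*}
This is the weak (distributional) form of \eqref{eq:autonomous-marginals-existence} for $\mu_t$, with $\mu_t$ weakly continuous in $t$ by path continuity. The computation for $\nu_t$ is symmetric.

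\emph{Step 3: uniqueness for the linear equation.} It remains to show that the linear Fokker--Planck equation with bounded drift $-\epsilon\nabla U$ and constant diffusion on the compact torus has a unique weakly continuous measure-valued solution from any initial measure. This is the only step requiring care, since $\mu_0$ may be singular. The cleanest route is duality. Fix $t$ and a smooth $g$, and solve the backward Kolmogorov equation
\begin{equation*}
\partial_s u+\epsilon\Delta u-\epsilon\nabla U\cdot\nabla u=0\ \text{on } [0,t), \qquad u(t)=g .
\end{equation*}
Because $\nabla U\in C^{\alpha_0}$, Schauder theory gives $u\in C^{1+\alpha_0/2,2+\alpha_0}$. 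Using $u(s,\cdot)$ as a time-dependent test function, which is justified by extending Step 2 to $C^{1,2}$ test functions, yields $\int g\,d\mu_t=\int u(0)\,d\mu_0$. This determines $\mu_t$. Alternatively, one can invoke the superposition principle together with uniqueness in law for the linear SDE $dX=-\epsilon\nabla U\,dt+\sqrt{2\epsilon}\,dW$, whose drift is Lipschitz. Either argument is standard; the Schauder regularity of the dual solution is the only technical input.
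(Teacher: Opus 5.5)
Your proposal is correct and follows the paper's argument: It\^o's formula together with the tower property cancels the $\nabla_x c$ terms and gives the weak linear Fokker--Planck equation, and uniqueness of that linear equation finishes the proof. The paper simply cites Theorem~9.4.3 of Bogachev--Krylov--R\"ockner--Shaposhnikov for uniqueness, whereas you supply a Schauder/duality argument, and you also use Girsanov to make explicit the existence of densities, which the paper's proof leaves implicit. One small correction: under Assumption~\ref{ass:regularity}, $\nabla U$ is only $C^{\alpha_0}$, not Lipschitz, so in your alternative superposition route the uniqueness in law of $dX=-\epsilon\nabla U(X)\,dt+\sqrt{2\epsilon}\,dW$ should be justified by Girsanov (bounded drift, additive nondegenerate noise) rather than by a Lipschitz condition.
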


\begin{proof}
    It follows readily from It\^o's formula and properties of conditional expectation that $\mu_t$ and $\nu_t$ are weak solutions of the Fokker--Planck equations \eqref{eq:autonomous-marginals-existence}. Furthermore, by the hypotheses on $U$ and $V$, these equations admit a unique weak solution each, by Theorem $9.4.3$ of \cite{BogachevKrylovRocknerShaposhnikov2015}.
\end{proof}

We now turn to the proof of uniqueness in Proposition~\ref{prop:well-posedness}.

\begin{proof}
The uniqueness follows from the same line of argument as in~\cite{CLP}.
Let $\PathLaw{P}_1$ and $\PathLaw{P}_2$ be two weak solution laws, with the conventions~\eqref{eq:path-law-conventions} and time marginals $P_{1,t},P_{2,t}$. Recall $c_x,c_y$ from~\eqref{eq:cost-gradient-bound}. The drift $\Drift[P_{i,t}]$ is uniformly bounded, so Girsanov's theorem applies and gives
\begin{align*}
 \Ent(\PathLaw{P}_1^t\mid\PathLaw{P}_2^t)
 &=\frac1{4\epsilon}\int_0^t\E^{\PathLaw{P}_1}
 \bigl[|\Drift[P_{1,s}](X_s,Y_s)-\Drift[P_{2,s}](X_s,Y_s)|^2\bigr]\,ds\\
 &=\frac1{4\epsilon}\int_0^t\E^{P_{1,s}}
 \Bigl[|\E^{P_{1,s}}[c_x\vert X]-\E^{P_{2,s}}[c_x\vert X]|^2
 +|\E^{P_{1,s}}[c_y\vert Y]-\E^{P_{2,s}}[c_y\vert Y]|^2\Bigr]ds.
\end{align*}
Write $P_{i,s;\Y\mid x}$ and $P_{i,s;\X\mid y}$ for the corresponding conditional laws, as in~\eqref{eq:conditional-laws}, and use $\TV(P,Q):=\sup_A|P(A)-Q(A)|$. Since $c_x,c_y$ are bounded,
$$
 \E^{P_{1,s}}
 \bigl[|\E^{P_{1,s}}[c_x\vert X]-\E^{P_{2,s}}[c_x\vert X]|^2\bigr]
 \le4\|c_x\|_\infty^2\E^{P_{1,s}}
 \bigl[\TV^2(P_{1,s;\Y\mid X},P_{2,s;\Y\mid X})\bigr],
$$
and similarly
$$
 \E^{P_{1,s}}
 \bigl[|\E^{P_{1,s}}[c_y\vert Y]-\E^{P_{2,s}}[c_y\vert Y]|^2\bigr]
 \le4\|c_y\|_\infty^2\E^{P_{1,s}}
 \bigl[\TV^2(P_{1,s;\X\mid Y},P_{2,s;\X\mid Y})\bigr].
$$
By Pinsker's inequality, $\TV^2(P,Q)\le \Ent(P\mid Q)/2$. Using the bound $K_c$ from~\eqref{eq:cost-gradient-bound}, we obtain
$$
 \Ent(\PathLaw{P}_1^t\mid\PathLaw{P}_2^t)
 \le\frac{K_c^2}{2\epsilon}\int_0^t\E^{\PathLaw{P}_1}
 \Bigl[\Ent(P_{1,s;\Y\mid X_s}\mid P_{2,s;\Y\mid X_s})
 +\Ent(P_{1,s;\X\mid Y_s}\mid P_{2,s;\X\mid Y_s})\Bigr]ds.
$$
By Lemma~\ref{lem:autonomous-marginals}, both solutions have the same marginal flows $\mu_s,\nu_s$. Thus
$$
 \Ent(\PathLaw{P}_1^t\mid\PathLaw{P}_2^t)
 \le\frac{K_c^2}{2\epsilon}\int_0^t
 \Bigl(\E^{\mu_s}[\Ent(P_{1,s;\Y\mid x}\mid P_{2,s;\Y\mid x})]
 +\E^{\nu_s}[\Ent(P_{1,s;\X\mid y}\mid P_{2,s;\X\mid y})]\Bigr)ds.
$$
The chain rule for entropy and the equal marginals give
$$
 \E^{\mu_s}[\Ent(P_{1,s;\Y\mid x}\mid P_{2,s;\Y\mid x})]
 =\Ent(P_{1,s}\mid P_{2,s})
 =\E^{\nu_s}[\Ent(P_{1,s;\X\mid y}\mid P_{2,s;\X\mid y})].
$$
Consequently,
$$
 \Ent(\PathLaw{P}_1^t\mid\PathLaw{P}_2^t)
 \le\frac{K_c^2}{\epsilon}\int_0^t \Ent(P_{1,s}\mid P_{2,s})\,ds.
$$
Data processing gives $\Ent(P_{1,s}\mid P_{2,s})\le \Ent(\PathLaw{P}_1^s\mid\PathLaw{P}_2^s)$. Since the two initial laws coincide, Gronwall's inequality yields $\PathLaw{P}_1=\PathLaw{P}_2$.
\end{proof}

We conclude this subsection with the following lemma, which gives some estimates on the density of the weak solution of \eqref{eq:dynamics} and is a straightforward consequence of standard bootstrap arguments.

\begin{lemma}
\label{lem:asymptotic-compactness}
Under Assumption~\ref{ass:regularity}, every global weak solution
from an arbitrary $P_0\in\Pc(\Z)$ has a strictly positive density $p_t$ at every
positive time.  For every $\tau>0$ and $0<\theta<\alpha_0$, there are
constants $c_\tau,C_{\tau,\theta}>0$, depending only on the fixed model data,
$\tau$, and, for $C_{\tau,\theta}$, $\theta$, such that, uniformly over
initial laws and solutions,
\begin{equation}
 c_\tau\le p_t\le C_{\tau,\theta},
 \qquad
 \sup_{t\ge\tau}\|p_t\|_{C^{1,\theta}(\Z)}\le C_{\tau,\theta}.
 \label{eq:compact-bounds}
\end{equation}
More precisely, the parabolic H\"older bounds satisfy
\begin{equation}
 \sup_{t\ge\tau}
 \|p\|_{C^{(1+\theta)/2,\,1+\theta}([t,t+1]\times\Z)}
 \le C_{\tau,\theta}.
 \label{eq:fr-parabolic-regularity}
\end{equation}
\end{lemma}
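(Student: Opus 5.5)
The plan is to treat the Fokker--Planck equation $\partial_t p_t=\epsilon\Delta p_t-\nabla\cdot(p_t\Drift[P_t])$ as a \emph{linear} equation in divergence form, with the drift frozen at the given solution. By~\eqref{eq:cost-gradient-bound} and Assumption~\ref{ass:regularity}, this drift is bounded by $K:=2K_c+\epsilon(\|\nabla U\|_\infty+\|\nabla V\|_\infty)$ uniformly over all solutions and all initial laws. We then run a bootstrap: first boundedness and H\"older continuity of $p$, then H\"older regularity of the drift, then Schauder estimates. Because every estimate is local in time on windows $[t-\tau,t+1]$, the bounds are uniform in $t\ge\tau$. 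The only input from the initial law is total mass one, which is why the constants do not depend on $P_0$.

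\textbf{Step 1 (bounds and $C^\theta$ regularity).} Fix $t_0\ge\tau$ and work on $[t_0-\tau,t_0+1]$, restarting the dynamics at time $t_0-\tau$ from the probability measure $P_{t_0-\tau}$; by uniqueness (Proposition~\ref{prop:well-posedness}) it suffices to bound the solution at times $\ge \tau$ after an arbitrary start. For a divergence-form equation with bounded drift on the compact torus, Aronson-type Gaussian bounds for the fundamental solution $\Gamma(t,z;s,z')$ hold with constants depending only on $\epsilon$, $K$, $\tau$ and the dimension. Writing $p_t(z)=\int\Gamma(t,z;0,z')\,P_0(dz')$ then gives two-sided bounds $c_\tau\le p_t\le C_\tau$ for $t\ge\tau$. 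On a compact torus, the Gaussian lower bound integrated over the whole space is bounded below by a positive constant. One could also use the Harnack inequality (Theorem~8.2.1 of~\cite{BogachevKrylovRocknerShaposhnikov2015}), or the H\"older estimates already quoted in the proof of Proposition~\ref{prop:well-posedness}. In the same way, De Giorgi--Nash--Moser gives $\|p\|_{C^{\theta_1/2,\theta_1}([t,t+1]\times\Z)}\le C$ for some $\theta_1>0$.

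\textbf{Step 2 (H\"older drift).} We show that $\Drift[P_t]$ is H\"older in $(t,z)$. The terms $\nabla_x c$ and $\nabla U$ are $C^{\alpha_0}$ by assumption. The conditional expectation can be written as
\[
\E^{P_t}[\nabla_xc\mid X=x]=\frac{\int \nabla_xc(x,y)p_t(x,y)\,dy}{\int p_t(x,y)\,dy}.
\]
The numerator is $C^{\min(\alpha_0,\theta_1)}$ in $x$ and H\"older in $t$. The denominator is bounded below by $c_\tau$. Hence the drift lies in $C^{\beta/2,\beta}$ with $\beta=\min(\alpha_0,\theta_1)$.

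\textbf{Step 3 (Schauder bootstrap).} For the equation $\partial_t p=\epsilon\Delta p-\nabla\cdot(p\,\Drift)$ with $p\,\Drift\in C^{\beta/2,\beta}$, interior-in-time divergence-form Schauder estimates (on $[t-\tau/2,t+1]$ versus $[t-\tau,t+1]$, with cutoff in time) give $p\in C^{(1+\beta)/2,1+\beta}$. This improves the spatial regularity of $p$ to Lipschitz, so the numerator in Step 2 becomes $C^{\alpha_0}$ in $x$. We iterate once more to reach every $\theta<\alpha_0$, which gives~\eqref{eq:fr-parabolic-regularity} and hence~\eqref{eq:compact-bounds}. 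The strict inequality $\theta<\alpha_0$ leaves room for the time-H\"older exponent of the conditional expectation, which is controlled through interpolation.

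\textbf{Main obstacle.} I expect the hardest part to be the \emph{uniformity} over arbitrary (possibly singular) $P_0$ in Step 1, including the lower bound. The drift is only measurable at first, and $P_0$ may be a Dirac mass. I would rely on Gaussian two-sided heat kernel bounds for bounded measurable drift, which depend only on $\|\Drift\|_\infty$, combined with the restart-at-$t_0-\tau$ argument. The remaining steps are routine once this uniform $L^\infty$, lower bound and H\"older base is in place.
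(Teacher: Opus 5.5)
Your proposal is correct and follows essentially the same bootstrap as the paper: Aronson-type two-sided bounds, H\"older continuity of $p$, then H\"older continuity of the conditional expectations and of the flux $p\,\Drift[P_t]$, and finally divergence-form Schauder estimates. The one real difference is that the paper uses the Bogachev--Krylov--R\"ockner--Shaposhnikov estimates to get a $C^{\beta/2,\beta}$ bound for every $\beta<1$ at once, so a single Schauder step suffices, whereas your De Giorgi--Nash--Moser exponent $\theta_1$ may be small and costs one extra iteration; separately, your restart on windows $[t_0-\tau,t_0+1]$ makes explicit the uniformity in $t\ge\tau$ and $P_0$ that the paper leaves implicit.
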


\begin{proof}
By Proposition~\ref{prop:well-posedness}, equation \eqref{eq:dynamics} admits a unique weak solution, which also admits a density.
First, the lower and upper bounds are consequences of the usual estimates for Kolmogorov--Fokker--Planck equations; see for instance \cite{Aronson}, or Theorems $8.1.2$ and $8.1.3$ of \cite{BogachevKrylovRocknerShaposhnikov2015}.

Choose $\theta<\beta<1$, since the diffusion matrix
is $\epsilon I$, the drift is bounded, and the nonnegative density has
finite spatial mass, the estimates
\cite[Corollary~6.4.5 and Theorem~6.2.2(ii)]{BogachevKrylovRocknerShaposhnikov2015}
give a locally uniform $C^{\beta/2,\beta}$ bound for $p$, in the sense that 
$$
\|p\|_{C^{\beta/2,\beta}([\tau,T]\times\mathbb T^d)}
\le C_{\tau,T,\beta,\epsilon},
\qquad 0<\tau<T.
$$
Consequently, the conditional expectations, seen as Borel functions : 
\[
 A_p(t,x):=\E^{P_t}[\nabla_xc\mid X=x]
 =\frac{\int_\Y\nabla_xc(x,y)p_t(x,y)\,dy}
        {\int_\Y p_t(x,y)\,dy}
\]
also has a locally uniform $C^{\theta/2,\theta}$ bound.
The same argument applies to the conditional expectation in $y$.
Since $c,U,V\in C^{1,\alpha_0}$, the drift
$(s,z)\mapsto\Drift[P_s](z)$, and hence the flux
$p_s\Drift[P_s]$, therefore have uniform
$C^{\theta/2,\theta}$ bounds.

Applying the divergence-form Schauder estimates
\cite[Theorem~2.1 and Section~4.1, with $m=1$]{DZ}
to
\[
 \partial_sp-\epsilon\Delta p
 =-\operatorname{div}(p\Drift[P_s])
\]
on these intervals gives
\[
 \sup_{t\ge\tau}
 \|p\|_{C^{(1+\theta)/2,1+\theta}([t,t+1]\times\Z)}
 \le C_{\tau,\theta,\epsilon}.
\]
Together with the lower and upper bounds, this proves
\eqref{eq:compact-bounds}.
\end{proof}

\subsection{Weak chain rule for entropy}

We record the following chain-rule identity for the relative entropy~\eqref{eq:relative-entropy} between two
solutions of linear Fokker--Planck equations with possibly different, merely
H\"older-continuous drifts. A version of this identity, in a different setting,
is discussed in Section $4.4$ of \cite{CLP}.

\begin{lemma}\label{lem:weak-chain-rule}
Let $\theta,\theta'\in(0,1)$ and let $b^1,b^2:[s,t]\times\Z\to\mathbb R^{d_x+d_y}$
be bounded, $\theta$-H\"older continuous in the space variable uniformly in
$u\in[s,t]$, and locally $\theta'$-H\"older continuous in time, uniformly in
$z\in\Z$ -- exactly the regularity established for the field $-\Drift[P_t]$ in the
proof of Lemma~\ref{lem:asymptotic-compactness}. Let $Q^1$ and $Q^2$, with positive densities $q^1,q^2$, solve
\[
 \partial_tq^i=\epsilon\Delta q^i+\operatorname{div}(q^ib^i),\qquad i=1,2,
\]
on $[s,t]\times\Z$ for some $0<s\le t$, enjoying the regularity conclusions
of Lemma~\ref{lem:asymptotic-compactness} (positivity, uniform upper and lower
bounds, and $C^{1,\theta}(\Z)$ regularity in space, uniformly on $[s,t]$). Then
\begin{equation}
\begin{aligned}
 \Ent(Q^1_t\mid Q^2_t)
 ={}&
 \Ent(Q^1_s\mid Q^2_s)
 -
 \int_s^t\int_\Z
 (b^1-b^2 + \epsilon\nabla_z \log ( \frac{q^1_u}{q^2_u} ) ) (u,z)\cdot\nabla_z\log\Bigl(\frac{q^1_u}{q^2_u}\Bigr)\,q^1_u\,dz\,du
\end{aligned}
 \label{eq:weak-chain-rule}
\end{equation}
\end{lemma}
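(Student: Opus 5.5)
The plan is to prove \eqref{eq:weak-chain-rule} by mollifying in time and space, applying the classical computation to the smoothed densities, and passing to the limit. First I record the formal computation, assuming $q^1,q^2$ are smooth. Set $h_u:=\log(q^1_u/q^2_u)$. Since $\int\partial_u q^1_u\,dz=0$, we have
\[
 \frac{d}{du}\Ent(Q^1_u\mid Q^2_u)=\int h_u\,\partial_uq^1_u\,dz-\int \frac{q^1_u}{q^2_u}\,\partial_uq^2_u\,dz .
\]
Now substitute the equations and integrate by parts on the torus, where there are no boundary terms. The first term gives $-\int\nabla h_u\cdot(\epsilon\nabla q^1_u+q^1_ub^1)\,dz$. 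The second gives $+\int\nabla(q^1_u/q^2_u)\cdot(\epsilon\nabla q^2_u+q^2_ub^2)\,dz$. Use the identities $\nabla(q^1/q^2)=(q^1/q^2)\nabla h$ and $\nabla q^1/q^1-\nabla q^2/q^2=\nabla h$. The integrand then collapses to $-(b^1-b^2+\epsilon\nabla h_u)\cdot\nabla h_u\,q^1_u$, and integrating in $u$ gives \eqref{eq:weak-chain-rule}.

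To make this rigorous without second derivatives, I will not differentiate the equations pointwise. Instead I will use their weak (distributional) form in space--time together with a Steklov/time-mollification argument. The key observation is that every term in the final identity involves only $q^i$, $\nabla q^i$, $b^i$ and the lower bound of $q^i$. All of these are bounded and continuous on $[s,t]\times\Z$ by the assumed $C^{1,\theta}$ regularity uniformly in $u$ and the two-sided density bounds. So $h_u$ and $q^1_u/q^2_u$ are admissible test functions in $C^{1}$ in space.

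The steps are as follows.
\begin{enumerate}[label=(\arabic*)]
 \item Regularize in time. For small $\delta>0$, consider the increment $\Ent(Q^1_{u+\delta}\mid Q^2_{u+\delta})-\Ent(Q^1_u\mid Q^2_u)$. Use convexity of $(a,b)\mapsto a\log(a/b)$ to sandwich it between $\int \partial_a f(q_u)(q^1_{u+\delta}-q^1_u)+\partial_b f(q_u)(q^2_{u+\delta}-q^2_u)$ and the same expression evaluated at $u+\delta$.
 \item Rewrite the differences. Express $q^i_{u+\delta}-q^i_u$ through the weak formulation against the $C^1$ test functions $h_u$ and $q^1_u/q^2_u$ (resp.\ at time $u+\delta$). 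This yields $\int_u^{u+\delta}\int\nabla(\text{test})\cdot(\epsilon\nabla q^i_r+q^i_rb^i_r)\,dz\,dr$.
 \item Sum and pass to the limit. Sum over a partition of $[s,t]$ and let $\delta\to0$, using the uniform continuity in $(r,z)$ of $q^i$, $\nabla q^i$ and $b^i$. Both Riemann-type sums converge to the same integral, namely the one appearing in \eqref{eq:weak-chain-rule}.
\end{enumerate}

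Time continuity of $\nabla q^i$ is supplied by the parabolic bound \eqref{eq:fr-parabolic-regularity}. The Hölder continuity of the $b^i$ in time gives the continuity needed in the flux terms.

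The main obstacle is step (1)–(3): justifying that the weak formulation can be tested against the time-dependent function $h_u$ without any time derivative of $h$. This needs the joint continuity of $\nabla q^i$ in $(u,z)$, not just bounds uniform in $u$. If the convexity sandwich proves awkward, I will first try mollifying $q^i$ in space by a kernel $\rho_\eta$. The mollified densities solve the equation with a commutator error in the flux of the form $(q^ib^i)*\rho_\eta-(q^i*\rho_\eta)b^i$. For them the computation above is legitimate, and the error tends to $0$ uniformly as $\eta\to0$ because $q^i$ and $b^i$ are continuous and bounded. Passage to the limit in the dissipation integral then uses the uniform lower bounds and uniform convergence of the gradients.
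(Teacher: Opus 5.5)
Your argument is correct, but your primary route differs from the paper's. The paper's proof is essentially your fallback. It mollifies only in space, $q^{i,\delta}=q^i*\rho_\delta$, and notes that for fixed $\delta$ these are Lipschitz in time with smooth spatial values. It then differentiates $\Ent(Q^{1,\delta}_u\mid Q^{2,\delta}_u)$ directly, bounds the commutator $r^{i,\delta}=(q^ib^i)*\rho_\delta-q^{i,\delta}b^i$ uniformly, and lets $\delta\to0$ using only the uniform spatial $C^{1,\theta}$ bounds and the lower bounds.

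Your convexity sandwich instead works with the unmollified densities:
\begin{enumerate}
\item Joint convexity of $(a,b)\mapsto a\log(a/b)$ squeezes each increment between the first-order terms at its two endpoints.
\item The weak formulation, tested against the frozen, time-independent $C^1$ functions $h_u$ and $q^1_u/q^2_u$, turns these into space--time flux integrals.
\item The telescoped sums converge to the same limit.
\end{enumerate}

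The obstacle you flag does not actually arise in this scheme. The test functions are frozen on each subinterval, so no time derivative of $h$ ever appears. The only input beyond the spatial bounds is uniform-in-time continuity of $u\mapsto\nabla h_u$ in sup norm. Time continuity of $b^i$ is not needed, since the bounded fluxes are merely integrated against the frozen gradients.

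You take that continuity from \eqref{eq:fr-parabolic-regularity}. It also follows from the parenthetical hypotheses alone:
\begin{itemize}
\item the weak equation with bounded flux makes $u\mapsto q^i_u$ Lipschitz in the dual of $C^1$;
\item a $C^{1,\theta}$-bounded set is compact in $C^1$, and on it that dual norm induces the $C^1$ topology uniformly.
\end{itemize}

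So your route trades the commutator estimate and the differentiation of a mollified entropy for a time-continuity input and a Riemann-sum limit. It is closer in spirit to the abstract chain rule for convex functionals along curves. The paper's route needs no time regularity of $\nabla q^i$ at all.
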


\begin{proof}
As discussed in the proof of Lemma $4.1$ of \cite{CLP}, we first regularize the density with respect to space.
Let $q^{i,\delta}=q^i*\rho_\delta$ and thanks to the bounds provided by Lemma \ref{lem:asymptotic-compactness}
\[
 r^{i,\delta}=(q^ib^i)*\rho_\delta-q^{i,\delta}b^i,
 \qquad \|r^{i,\delta}\|_\infty\le C\delta^\theta.
\]
For fixed $\delta$, the equation makes $q^{i,\delta}$ absolutely
continuous in time with smooth spatial values. Differentiating its
entropy, and integrating by parts only in space, gives
\[
 \begin{aligned}
 \frac d{du}\Ent(Q^{1,\delta}_u\mid Q^{2,\delta}_u)
 ={}&-\int_\Z
 (b^1-b^2+\epsilon\nabla R^\delta)\cdot\nabla R^\delta
                                      \,q^{1,\delta}\,dz+E_\delta(u),\\
 E_\delta(u)={}&-\int_\Z r^{1,\delta}\cdot\nabla R^\delta\,dz
 +\int_\Z\frac{q^{1,\delta}}{q^{2,\delta}}
                      r^{2,\delta}\cdot\nabla R^\delta\,dz,
 \qquad R^\delta=\log(q^{1,\delta}/q^{2,\delta}).
 \end{aligned}
\]
The lower bounds and uniform spatial $C^{1,\theta}$ bounds imply
$q^{i,\delta}\to q^i$ and $\nabla q^{i,\delta}\to\nabla q^i$
uniformly on $[s,t]\times\Z$, and $\sup_u|E_\delta(u)|\to0$.
Integrating the identity on any subinterval and passing to the limit
proves~\eqref{eq:weak-chain-rule} and absolute continuity.
No extension beyond $[s,t]$ is used.
\end{proof}

\subsection{Wasserstein-to-entropy regularization}
\label{sec:wasserstein-initialization}

We now establish an estimate which is a consequence of the smoothing effect of the Markov semigroup.

\begin{proposition}[Wasserstein-to-entropy regularization]
\label{prop:wasserstein-smoothing}
Let $\pi$ be the entropic optimal coupling of \eqref{eq:pi-form}, and for
$1\le a\le\infty$ set
\[
 \lambda_a:=\frac12+\frac{d(a-1)}{2a},
 \qquad\text{with }\lambda_\infty:=\frac{1+d}{2}.
\]
For every $1\le a\le\infty$ there exists a constant
$C_a=C_a(\varepsilon,U,V,c)>0$ such that
\[
 \|p_t-\pi\|_{\mathbb L^a(\Z)}
 \le C_a\,t^{-\lambda_a}\,\Wass{1}(P_0,\pi),
 \qquad 0<t\le1 .
\]
Consequently, for every $0 < \tau < 1$ there exists a constant
$C_\tau=C(\tau,\varepsilon,U,V,c)$ such that
\begin{equation}
 \Ent(P_\tau\mid\pi)
 \le C_\tau\,\Wass{1}(P_0,\pi)^2
 \le C_\tau\,\Wass{2}(P_0,\pi)^2,
 \label{eq:ws-entropy-smoothing}
\end{equation}
and one may take $C_\tau=C\,\tau^{-(1+d/2)}$ for $0<\tau\le1$.
\end{proposition}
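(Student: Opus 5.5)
We will prove the Wasserstein-to-entropy regularization bound by comparing the nonlinear flow $P_t$ with the stationary flow started at $\pi$, treating the difference as a solution of a linear Fokker--Planck equation with a small source, and using heat-kernel smoothing. Here $d=d_x+d_y$.

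\textbf{Stationarity of $\pi$.} The first step is to check that $\pi$ is stationary for \eqref{eq:dynamics}. The Gibbs form \eqref{eq:pi-form} gives $\nabla_x\log\pi=\nabla\phi-\nabla_xc/\epsilon-\nabla U$. Since $\pi\in\Pi(\mu,\nu)$, we have $\E^\pi[\nabla_x\log\pi\mid X]=\nabla_x\log\mu=-\nabla U$, which forces $\epsilon\nabla\phi=\E^\pi[\nabla_xc\mid X]$; the analogous identity holds in $y$. Hence $\Drift[\pi]=\epsilon\nabla\log\pi$, so the flux $\epsilon\nabla\pi-\pi\Drift[\pi]$ vanishes and $\pi$ is stationary. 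The regularity of $\phi,\psi$ follows from the Schr\"odinger system together with $c\in C^{1,\alpha_0}$. Uniqueness from Proposition~\ref{prop:well-posedness} then gives $P^\pi_t\equiv\pi$ for the solution started at $\pi$.

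\textbf{Difference equation.} Set $w_t=p_t-\pi$. It satisfies
\[
\partial_tw=\epsilon\Delta w-\nabla\cdot\bigl(w\,\Drift[P_t]\bigr)-\nabla\cdot\bigl(\pi(\Drift[P_t]-\Drift[\pi])\bigr).
\]
The drift difference is
\[
\Drift_\X[P_t]-\Drift_\X[\pi]=\E^{P_t}[c_x\mid X]-\E^{\pi}[c_x\mid X],
\]
which depends nonlinearly on $p_t$ through the ratio defining the conditional expectation. To estimate it we bound the numerator and the denominator separately. By Lemma~\ref{lem:autonomous-marginals} the marginals $\mu_t,\nu_t$ solve autonomous linear equations with stationary states $\mu,\nu$, so their deviations from $\mu,\nu$ are controlled by standard linear smoothing. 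The lower bound for $\mu_t$ near $t=0$ is a problem, however, and I would avoid it by working in duality. Concretely, I would prove the $L^1$-type bound $\|p_t-\pi\|_{L^1}\le C t^{-1/2}\Wass{1}(P_0,\pi)$ as follows. Take the (time-inhomogeneous) linear Fokker--Planck evolution with frozen drift $\Drift[P_t]$. Its fundamental solution $\Gamma(t,z;0,z')$ satisfies Aronson/Schauder-type gradient bounds $\|\nabla_{z'}\Gamma(t,\cdot;0,z')\|_{L^1}\le Ct^{-1/2}$, since the drift is bounded (Lemma~\ref{lem:asymptotic-compactness} and \cite{BogachevKrylovRocknerShaposhnikov2015}). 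Then $p_t-\pi$ equals the propagation of $P_0-\pi$ by $\Gamma$, plus a Duhamel term driven by $\pi(\Drift[P_s]-\Drift[\pi])$.

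\textbf{Source term and Gronwall.} For the first piece, Kantorovich--Rubinstein duality with the Lipschitz bound on $z'\mapsto\Gamma(t,z;0,z')$ gives $L^1$ and $L^\infty$ bounds of order $t^{-1/2}$ and $t^{-(1+d)/2}$ respectively, times $\Wass{1}(P_0,\pi)$. For the source, I would bound $|\E^{P_s}[c_x\mid X]-\E^\pi[c_x\mid X]|$ in $L^1(\pi)$ by $C\|p_s-\pi\|_{L^1}$. This holds because $\pi$ has positive marginals bounded below: writing the conditional expectation as a ratio, the error is controlled by $\int|p_s-\pi|\,dy$ and $|\mu_s-\mu|$. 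The Duhamel term then contributes $\int_0^t(t-s)^{-1/2}\|p_s-\pi\|_{L^1}\,ds$. A singular Gronwall (Volterra) argument on $[0,1]$ closes the estimate $\|p_t-\pi\|_{L^1}\le Ct^{-1/2}\Wass{1}(P_0,\pi)$. The $L^\infty$ bound follows by a second iteration of the same Duhamel representation, and interpolating between $a=1$ and $a=\infty$ yields $\lambda_a$.

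The main obstacle is precisely this nonlinear source, because $\Drift[P_s]$ needs pointwise control of $p_s$ near $s=0$. I would handle it by bounding the source only in $L^1$ against the bounded weight $\pi$, and by using the boundedness of the drift, which is uniform in the law, for the transport term.

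\textbf{Entropy bound.} Finally, $\Ent(P_\tau\mid\pi)\le\chi^2(P_\tau\mid\pi)=\int(p_\tau-\pi)^2/\pi\,dz\le(\inf\pi)^{-1}\|p_\tau-\pi\|_{L^1}\|p_\tau-\pi\|_{L^\infty}$. Using $\lambda_1+\lambda_\infty=1+d/2$, this gives $C\tau^{-(1+d/2)}\Wass{1}(P_0,\pi)^2$. The last inequality in \eqref{eq:ws-entropy-smoothing} is $\Wass{1}\le\Wass{2}$.
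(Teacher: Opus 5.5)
Your route is sound but linearizes differently from the paper. The paper fixes the time-homogeneous semigroup $S_h$ generated by $\epsilon\Delta-\operatorname{div}(b_*\,\cdot\,)$ with $b_*=\Drift[\pi]$, and puts the whole nonlinearity into the flux $F[p_t]=(\Drift[P_t]-b_*)p_t$, for which it proves $\|F[p_t]\|_a\le C\|p_t-\pi\|_a$ for every $a$. It gets $L^1$ from Duhamel plus Henry--Gronwall. It then climbs from $L^a$ to $L^{a+\delta}$ by restarting Duhamel at $t/2$, with increments small enough that $(t-s)^{-1/2-\eta_a}$ stays integrable. It concludes by interpolation and $\chi^2\le\pi_*^{-1}\|p_\tau-\pi\|_2^2$.

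You instead absorb the transport term $w\,\Drift[P_t]$ into the true, solution-dependent evolution. Then $p_t=\Gamma_{t,0}P_0$ holds exactly, and the source $\pi(\Drift[P_t]-\Drift[\pi])$ carries the fixed weight $\pi$. The identity $\mu(A_s-A_\pi)=\int_\Y(\nabla_xc-A_s)(p_s-\pi)\,dy$ then bounds this source in $L^1$ and $L^\infty$ by $\|p_s-\pi\|_1$ and $\|p_s-\pi\|_\infty$, with no lower bound on $\mu_s$. That is a tidy way around the degeneration you flagged. Your duality treatment of the initial term ($t^{-1/2}$ in $L^1$, $t^{-(1+d)/2}$ in $L^\infty$) is also the accurate form of the paper's Step~1.

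The price is that near $t=0$ the drift $\Drift[P_t]$ is only bounded and measurable, with no H\"older control. You therefore need backward-variable gradient bounds for the fundamental solution of a time-dependent bounded measurable drift, uniform in the sup norm of the drift, as in \cite{MenozziPesceZhang2021}. Neither Lemma~\ref{lem:asymptotic-compactness} nor Schauder theory supplies these. The paper only needs such bounds for the fixed H\"older drift $b_*$. The Duhamel identity for $\pi$ under your non-autonomous evolution also needs the usual weak justification.

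The one step you leave too vague is the passage to $L^\infty$, and as written it does not close. If you run Duhamel from time $0$ and estimate the source in $L^1$, the kernel is $(t-s)^{-(1+d)/2}$, which is not integrable at $s=t$. If you estimate the source in $L^\infty$ instead, you meet $\int_0^t(t-s)^{-1/2}s^{-(1+d)/2}\,ds=\infty$ at $s=0$. Restart at $t/2$:
\[
 \|p_t-\pi\|_\infty\le Ct^{-d/2}\|p_{t/2}-\pi\|_1+C\int_{t/2}^t(t-s)^{-1/2}\|p_s-\pi\|_\infty\,ds .
\]
Then close with a weighted-sup argument for $f(t):=t^{(1+d)/2}\|p_t-\pi\|_\infty$, first on a short interval and then up to $t=1$. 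This needs the a priori finiteness of $\sup_{t\le1}f(t)$, which follows from the Gaussian bound $p_t\le Ct^{-d/2}$. Alternatively, climb through intermediate $L^a$ spaces as the paper does. With this fix, your interpolation between $a=1$ and $a=\infty$ and the bound $\chi^2\le(\inf\pi)^{-1}\|w\|_1\|w\|_\infty$ reproduce the exponents $\lambda_a$ and $\tau^{-(1+d/2)}$.
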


\begin{proof}
Throughout, $\|\cdot\|_a$ denotes the norm of $\mathbb L^a(\Z)$, and $C$
denotes a constant depending only on $(\varepsilon,U,V,c)$, and possibly on
$a$, whose value may change from line to line. Set
\[
 w_t:=p_t-\pi,\qquad e_a(t):=\|w_t\|_a,\qquad
 W:=\Wass{1}(P_0,\pi),\qquad b_*:=\Drift[\pi].
\]
Since $\pi$ is a stationary solution of \eqref{eq:dynamics}, $w$ solves
\[
 \partial_t w_t=\varepsilon\Delta w_t-\operatorname{div}\bigl(b_*w_t+F[p_t]\bigr),
 \qquad F[p_t]:=\bigl(\Drift[p_t]-b_*\bigr)p_t .
\]

\smallskip
We first claim that
\begin{equation}\label{eq:flux-bound}
 \|F[p_t]\|_a\le C\,\|p_t-\pi\|_a,
 \qquad 1\le a\le\infty,
\end{equation}
with $C$ independent of $a$ and $t$. Let
$A_t:=\E^{p_t}[\nabla_xc\mid X]$, $A_\pi:=\E^{\pi}[\nabla_xc\mid X]$,
$m_t:=(p_t)_\X$ and $\mu:=\pi_\X$, where measures are identified with their
densities. The $x$-component of $F[p_t]$ is $(A_t-A_\pi)p_t$, and a direct
computation gives
\[
 \begin{aligned}
 m_t\,(A_t-A_\pi)&=\int_\Y(\nabla_xc-A_\pi)(p_t-\pi)\,dy,\\
 \mu\,(A_t-A_\pi)&=\int_\Y(\nabla_xc-A_t)(p_t-\pi)\,dy.
 \end{aligned}
\]
Since $\nabla_xc$ is bounded, the first identity yields
$\|(A_t-A_\pi)p_t\|_1=\|m_t(A_t-A_\pi)\|_{\mathbb L^1(\X)}
\le 2\|\nabla_xc\|_\infty\|p_t-\pi\|_1$.
For general $a$, we write
\[
 (A_t-A_\pi)p_t=(A_t-A_\pi)(p_t-\pi)+\frac{\pi}{\mu}\int_\Y(\nabla_xc-A_t)(p_t-\pi)\,dy .
\]
The first term is bounded pointwise by $2\|\nabla_xc\|_\infty|p_t-\pi|$.
For the second, $\pi/\mu$ is bounded since $\pi$ is bounded and $\mu$ is
bounded below by a positive constant, and H\"older's inequality
gives
\[
 \Bigl|\int_\Y(\nabla_xc-A_t)(p_t-\pi)\,dy\Bigr|
 \le 2 C_a \|\nabla_xc\|_\infty\
 \Bigl(\int_\Y|p_t-\pi|^a\,dy\Bigr)^{1/a}.
\]
This yields the bound in $\mathbb L^a$
uniformly in $a$. The $y$-component is treated in the same way, which
proves~\eqref{eq:flux-bound}.

\smallskip
\noindent\emph{Semigroup estimates.} Let $(S_h)_{h\ge0}$ be the semigroup
generated by $\varepsilon\Delta-\operatorname{div}(b_*\,\cdot\,)$. The Gaussian
kernel bounds and the gradient estimates in the initial variable of
\cite[Theorem~1.2(i)--(ii)]{MenozziPesceZhang2021}, combined with Young's
inequality, give, for $1\le a\le r\le\infty$ and $0<h\le1$,
\begin{equation}\label{eq:semigroup-lp}
 \|S_hf\|_r\le C_{a,r}\,h^{-\eta}\|f\|_a,\qquad
 \|S_h\operatorname{div}A\|_r\le C_{a,r}\,h^{-1/2-\eta}\|A\|_a,
 \qquad
 \eta:=\frac d2\Bigl(\frac1a-\frac1r\Bigr).
\end{equation}

\smallskip
\noindent\emph{Strategy.} For $1\le a\le\infty$, we denote by $(E_a)$ the
statement
\[
 (E_a)\qquad \text{there exists } C_a>0 \text{ such that }\;
 e_a(t)\le C_a\,t^{-\lambda_a}\,W
 \quad\text{for all } 0<t\le1 \text{ and all } P_0,
\]
and we set
\[
 I:=\{a\in[1,\infty] : (E_a)\text{ holds}\}.
\]
The first claim of the proposition is that $I=[1,\infty]$.

\smallskip
\noindent\emph{Step 0: $I$ is an interval.} Let $a_0<a<a_1$ with
$a_0,a_1\in I$, and let $\theta\in(0,1)$ be such that
$\frac1a=\frac{\theta}{a_0}+\frac{1-\theta}{a_1}$. By H\"older's inequality,
\[
 e_a(t)\le e_{a_0}(t)^{\theta}\,e_{a_1}(t)^{1-\theta}
 \le C\,t^{-(\theta\lambda_{a_0}+(1-\theta)\lambda_{a_1})}\,W
 = C\,t^{-\lambda_a}\,W,
\]
where the last equality holds because $\lambda_a=\frac{1+d}{2}-\frac{d}{2a}$
is affine in $1/a$. Hence $a\in I$.

\smallskip
\noindent\emph{Step 1: $1\in I$.}
For $z \in \Z$,
$$
 S_t (P_0 - \pi) (z )
 =
 \int S_t ( \delta_{w}) ( z ) P_0(\mathrm{d}w)
-
 \int S_t ( \delta_{w}) ( z ) \pi (\mathrm{d}w)
 =
  \int S_t ( \delta_{w}) ( z ) (P_0 - \pi )(\mathrm{d}w)
 .
$$
By the regularization property of the Markov semigroup (see the introduction of \cite{MenozziPesceZhang2021} or Theorem $1.1$ of \cite{ChenHuXieZhang2017}), $w \mapsto S_t ( \delta_{w}) ( z )$ is $\frac{C}{\sqrt{t}}$-Lipschitz, hence, with $\Wass{1}$ as in~\eqref{eq:wasserstein-distances},
$$
 \vert S_t (P_0 - \pi) (z ) \vert 
\leq
\frac{C}{\sqrt{t}} \Wass{1}(P_0, \pi)=: W.
$$
By Duhamel's formula,
\[
 w_t=S_t(P_0-\pi)-\int_0^t S_{t-s}\operatorname{div}F[p_s]\,ds,
\]
so that \eqref{eq:semigroup-lp} with $a=r=1$ and \eqref{eq:flux-bound} yield
\[
 e_1(t)\le C\,t^{-1/2}\,W+C\int_0^t(t-s)^{-1/2}\,e_1(s)\,ds,
 \qquad 0<t\le1.
\]
Now, by the Henry--Gronwall inequality of Theorem $3$ of \cite{Medved1997}, which applies since $1/\sqrt t$ is integrable on $(0, 1)$, we get
\begin{equation}\label{eq:l1-regularization}
 e_1(t)\le C\,t^{-1/2}\,W,\qquad 0<t\le1.
\end{equation}

\smallskip
\noindent\emph{Step 2: there exists $\delta>0$ such that $a\in I$ implies
$a+\delta\in I$ for every finite $a\ge1$.} Set
\[
 \delta:=\frac1{2d-1},\qquad r:=a+\delta,\qquad
 \eta_a:=\frac d2\Bigl(\frac1a-\frac1r\Bigr)=\frac{d\delta}{2a(a+\delta)} .
\]
Since $a\ge1$, we have
$\eta_a\le\frac{d\delta}{2(1+\delta)}=\frac14$, and moreover
$\lambda_a+\eta_a=\lambda_r$. Assume $(E_a)$. Writing Duhamel's formula
between $t/2$ and $t$ and applying \eqref{eq:semigroup-lp} and
\eqref{eq:flux-bound}, we obtain, for $0<t\le1$,
\[
 \begin{aligned}
 e_r(t)&\le C\,t^{-\eta_a}\,e_a(t/2)
       +C\int_{t/2}^t(t-s)^{-1/2-\eta_a}\,e_a(s)\,ds\\
 &\le C\,W\bigl(t^{-\lambda_a-\eta_a}+t^{1/2-\eta_a-\lambda_a}\bigr)
 \le C\,t^{-\lambda_r}\,W,
 \end{aligned}
\]
where we used $e_a(s)\le C_a(t/2)^{-\lambda_a}W$ for $s\in[t/2,t]$, the
integrability of $(t-s)^{-1/2-\eta_a}$ (as $\eta_a\le\frac14$), and $t\le1$.
Hence $(E_r)$ holds, with a constant depending only on the fixed data and
on $C_a$.

\smallskip
\noindent\emph{Conclusion: $I=[1,\infty]$.} 
In particular, there exists $C_2$ such that
\begin{equation}\label{eq:l2-regularization}
 e_2(t)\le C\,t^{-1/2-d/4}\,W,\qquad 0<t\le1.
\end{equation}
Let $\pi_*:=\min_\Z\pi>0$. Since
$\log u\le u-1$ and $\int_\Z p_t\,dz=\int_\Z\pi\,dz=1$,
\[
 \Ent(P_t\mid\pi)
 \le\int_\Z p_t\Bigl(\frac{p_t}{\pi}-1\Bigr)dz
 =\int_\Z\frac{(p_t-\pi)^2}{\pi}\,dz
 \le\pi_*^{-1}\,e_2(t)^2 .
\]
Together with \eqref{eq:l2-regularization}, this proves
\eqref{eq:ws-entropy-smoothing} for $0<\tau\le1$, with
$C_\tau=C\tau^{-(1+d/2)}$. Finally, since $\Wass{1}\le\Wass{2}$, we get the desired inequality.
\end{proof}

\section{Local estimates}
\label{sec:local-static-estimates}
This section provides an upper bound of $\Ent(P_t \mid \pi)$, whenever $P_t$ is in a neighborhood of $\pi$, in terms of the projected Fisher information $\PFI(P_t \mid \pi)$ from~\eqref{eq:projected-fisher} and the relative entropies $\Ent(\mu_t \mid \mu)+\Ent(\nu_t \mid \nu)$. The central idea is the decomposition
\begin{align*}
    \Ent(P_t \mid \pi)=\Ent(P_t \mid \Bridge{\mu_t}{\nu_t})+\Ent( \Bridge{\mu_t}{\nu_t} \mid \pi), \qquad \PFI(P_t \mid \pi)= \PFI(P_t \mid \Bridge{\mu_t}{\nu_t})
\end{align*}
where $\Bridge{\mu_t}{\nu_t}$, constructed in~\eqref{eq:bridge-scaling}, is the minimizer of
\begin{align*}
    \min_{Q \in \Pi(\mu_t,\nu_t)}\Ent(Q \mid \pi). 
\end{align*}
By \cite[Theorem~2.1]{Nutz2022}, the minimizer $\Bridge{\mu_t}{\nu_t}$ is of the Gibbs form $$d\Bridge{\mu_t}{\nu_t}=e^{a(x)+b(y)} d\pi.$$ We call such couplings $\Bridge{\mu_t}{\nu_t}$ \emph{local bridges}. In particular, we show in Lemma~\ref{lem:bridge-smooth} that whenever $$\big\|\frac{d\mu_t}{d\mu} -1 \big\|_{C^1}+\big\|\frac{d\nu_t}{d\nu}-1 \big\|_{C^1}$$ is small, the corresponding potentials $a,b$ are regular enough to prove the projected log-Sobolev inequality in Corollary~\ref{cor:nearby-plsi}, i.e. 
\begin{align*}
    \Ent(P_t \mid \Bridge{\mu_t}{\nu_t}) \leq \frac{1}{2\rho} \PFI(P_t \mid \Bridge{\mu_t}{\nu_t}). 
\end{align*}
This inequality plays a crucial role in the compactness argument of
Lemma~\ref{lem:compact-coercivity}.

\subsection{Existence and properties of bridges}

Fix $0<\sigma<\alpha_0$ and let $\alpha=h\mu$ and $\beta=k\nu$ be
probability measures with $(h,k)$ close to $(1,1)$ in $C^{1,\sigma}$ or $L^\infty$ norm.
We use $(a\oplus b)(x,y):=a(x)+b(y)$ and set
\begin{equation}
 r_\pi:=\frac{d\pi}{d(\mu\otimes\nu)},\qquad r_*:=\min_\Z r_\pi>0.
 \label{eq:bridge-reference-density}
\end{equation}
We first construct a coupling by multiplicative scaling,
\begin{equation}
 d\Bridge{\alpha}{\beta}=e^{a(x)+b(y)}d\pi,
 \label{eq:bridge-scaling}
\end{equation}
and then identify it as the unique minimizer of $\Ent(\cdot\mid\pi)$ over
$\Pi(\alpha,\beta)$.  The construction in the $C^{1,\sigma}$ norm provides sufficient regularity 
used for proving the local projected logarithmic Sobolev inequality, and its
$L^\infty$ version suffices for estimating $\Ent(\Bridge{\alpha}{\beta} \mid \pi)$.  

\begin{lemma}\label{lem:bridge-smooth}
Let $\mathcal{S}$ be either $C^{1,\sigma}(\X) \times C^{1,\sigma}(\Y)$ or $L^\infty(\X) \times L^\infty(\Y)$ and take closed subsets
\begin{align*}
    \mathcal E_{\mathrm{marg}}:= \left\{(h,k) \in \mathcal{S}: \, \int h \, d\mu=\int k \, d\nu  \right\}, \quad \mathcal{R}:=\left\{(a,b) \in \mathcal{S}: \, \int a \, d\mu=0 \right\}
\end{align*}
There exists a neighborhood $\mathcal{U} $ of $(1,1)$ in $\mathcal E_{\mathrm{marg}}$ and a smooth functional $$\mathcal{G}: \mathcal{U} \to \mathcal{R};\, \, (h,k) \mapsto (a,b) $$  such that          $e^{a\oplus b} \pi \in \Pi(h\mu, k \nu)$. In particular, the coupling $\Bridge{h\mu}{k\nu}$ in \eqref{eq:bridge-scaling} is well-defined for $(h,k) \in \mathcal{U}$. 
\end{lemma}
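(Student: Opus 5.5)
The plan is to apply the implicit function theorem to the Schrödinger (scaling) system around the trivial solution $(a,b)=(0,0)$, $(h,k)=(1,1)$. Define
\[
 \mathcal F(a,b;h,k):=\Bigl(\int_\Y e^{a(x)+b(y)}r_\pi(x,y)\,\nu(dy)-h(x),\;
 \int_\X e^{a(x)+b(y)}r_\pi(x,y)\,\mu(dx)-k(y)\Bigr),
\]
so that $e^{a\oplus b}\pi\in\Pi(h\mu,k\nu)$ iff $\mathcal F(a,b;h,k)=0$. Since $\pi\in\Pi(\mu,\nu)$, we have $\mathcal F(0,0;1,1)=0$.

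\emph{Function spaces and smoothness.} The map $\mathcal F$ should be viewed as going from $\mathcal R\times\mathcal E_{\mathrm{marg}}$ into $\mathcal T:=\{(f,g)\in\mathcal S:\int f\,d\mu=\int g\,d\nu\}$. This target is the correct one: integrating the two components of $\mathcal F$ against $\mu$ and $\nu$ gives the same total mass $\int e^{a\oplus b}d\pi$, minus $\int h\,d\mu$ and $\int k\,d\nu$ respectively, and these two agree on $\mathcal E_{\mathrm{marg}}$.

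Smoothness of $\mathcal F$ follows from two facts:
\begin{itemize}
\item Composition with $\exp$ is a smooth Nemytskii operator on the Banach algebras $C^{1,\sigma}$ and $L^\infty$.
\item The integral operators $f\mapsto\int_\Y f(x,y)r_\pi(x,y)\nu(dy)$ and its $x$-analogue are bounded linear maps $C^{1,\sigma}(\Z)\to C^{1,\sigma}(\X)$, and likewise on $L^\infty$.
\end{itemize}
For the second point, \eqref{eq:pi-form} gives $r_\pi=e^{\phi\oplus\psi-c/\epsilon}$. So $r_\pi\in C^{1,\alpha_0}$ in $x$, uniformly in $y$, provided the Schrödinger potentials $\phi,\psi$ are $C^{1,\alpha_0}$. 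This holds because they satisfy $e^{-\phi(x)}=\int e^{\psi(y)-c(x,y)/\epsilon}\nu(dy)$, which inherits the regularity of $c$. Differentiation under the integral is then justified since $\nu$ is a finite measure and $\sigma<\alpha_0$.

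\emph{Linearization.} At $(0,0;1,1)$ the partial derivative in $(a,b)$ is
\[
 D\mathcal F(\dot a,\dot b)=\bigl(\dot a+K\dot b,\;K^*\dot a+\dot b\bigr),\qquad
 K\dot b(x)=\E^\pi[\dot b(Y)\mid X=x],\quad K^*\dot a(y)=\E^\pi[\dot a(X)\mid Y=y].
\]
We must show this is an isomorphism from $\mathcal R$ onto $\mathcal T$; this is the main obstacle.

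\emph{Injectivity.} If $\dot a+K\dot b=0$ and $K^*\dot a+\dot b=0$, then
\[
 \E^\pi[(\dot a(X)+\dot b(Y))^2]=\int\dot a(\dot a+K\dot b)\,d\mu+\int\dot b(K^*\dot a+\dot b)\,d\nu=0.
\]
Since $r_\pi>0$ makes $\pi$ equivalent to $\mu\otimes\nu$, this forces $\dot a\oplus\dot b=0$. Hence $\dot a$ and $\dot b$ are opposite constants, and the normalization $\int\dot a\,d\mu=0$ kills both.

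\emph{Surjectivity.} The plan is a Fredholm argument. Write $D\mathcal F=\mathrm{Id}+\mathcal K$ with $\mathcal K(\dot a,\dot b)=(K\dot b,K^*\dot a)$. For the $C^{1,\sigma}$ case, $K$ maps $L^\infty$ boundedly into $C^{1,\alpha_0}$, because the kernel $r_\pi$ is $C^{1,\alpha_0}$ in the output variable. Therefore $\mathcal K$ is compact on $C^{1,\sigma}$ by Arzelà–Ascoli. On $L^\infty$, $K$ maps into $C^{0,\alpha_0}$ and is again compact.

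By the Fredholm alternative, $\mathrm{Id}+\mathcal K$ on the full space $\mathcal S$ has closed range of codimension equal to the dimension of its kernel. From the injectivity computation (done without the normalization), that kernel is the one-dimensional span of $(1,-1)$. So the range has codimension one and is contained in $\mathcal T$, which itself has codimension one; hence the range equals $\mathcal T$. Restricting the domain to $\mathcal R$ removes the kernel, so $D\mathcal F:\mathcal R\to\mathcal T$ is a bijection, and bounded inverse follows by the open mapping theorem.

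\emph{Conclusion.} The implicit function theorem on Banach spaces now yields a neighborhood $\mathcal U$ of $(1,1)$ in $\mathcal E_{\mathrm{marg}}$ and a smooth map $\mathcal G:(h,k)\mapsto(a,b)\in\mathcal R$ with $\mathcal F(\mathcal G(h,k);h,k)=0$. This is exactly $e^{a\oplus b}\pi\in\Pi(h\mu,k\nu)$. Well-definedness of $\Bridge{h\mu}{k\nu}$ follows, with identification as the entropy minimizer deferred to the Gibbs characterization in \cite[Theorem~2.1]{Nutz2022}.
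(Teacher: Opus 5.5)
Your proof is correct. Its overall frame matches the paper's: an implicit/inverse function theorem at $(0,0)$, the same linearization $(u,v)\mapsto(u+Kv,\,v+K^*u)$, and the same target, since your $\mathcal T$ is exactly the paper's $\mathcal E_{\mathrm{marg}}$. Treating $(h,k)$ as a parameter instead of inverting $\mathcal F:\mathcal R\to\mathcal E_{\mathrm{marg}}$ directly is only cosmetic.

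Where you genuinely differ is the invertibility of $D\mathcal F_{(0,0)}$.
\begin{itemize}
\item The paper reduces $D\mathcal F_{(0,0)}(u,v)=(\xi,\eta)$ to the single equation $u-KK^*u=\xi-K\eta$ on mean-zero $L^2(\mu)$. It obtains the contraction $\|KK^*u\|_{L^2(\mu)}\le(1-r_*)\|u\|_{L^2(\mu)}$ from the Doeblin-type splitting $r_\pi=r_*+(1-r_*)\tilde r$, where $\tilde r\,\mu\otimes\nu\in\Pi(\mu,\nu)$. It then solves by Banach's fixed point theorem and bootstraps $u$ into $C^{1,\sigma}$ via $KK^*:L^2(\mu)\to C^{1,\sigma}(\X)$.
\item You get injectivity from the energy identity $\E^\pi[(\dot a\oplus\dot b)^2]=\langle\dot a,\dot a+K\dot b\rangle_{L^2(\mu)}+\langle\dot b,K^*\dot a+\dot b\rangle_{L^2(\nu)}$. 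Surjectivity comes from Riesz--Schauder theory: $\mathrm{Id}+\mathcal K$ has index zero, and you identify its range by counting codimension against the kernel $\mathrm{span}(1,-1)$.
\end{itemize}

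Each route buys something different.
\begin{itemize}
\item The paper's route is constructive and quantitative. It gives the explicit bound $r_*\|u\|_{L^2(\mu)}\le\|\xi-K\eta\|_{L^2(\mu)}$ on the inverse. At the $L^2$ level it needs only $r_*>0$, and in the $L^\infty$ case mere boundedness of $r_\pi$ suffices for the bootstrap.
\item Your route is softer and structural. Injectivity needs only $r_\pi>0$, and surjectivity comes for free from index zero, with no Doeblin trick or reduction to one equation. The price is that it relies on compactness of $K$ in each space. That requires H\"older regularity of $r_\pi$ in the output variable even for $\mathcal S=L^\infty$. You do supply this, and you also justify $r_\pi\in C^{1,\alpha_0}$ through the Schr\"odinger equations, which the paper takes for granted. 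It also yields boundedness of the inverse only through the open mapping theorem, with no explicit constant.
\end{itemize}
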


\begin{proof}
We prove the result by Banach implicit function theorem. Let us first construct the inverse of $\mathcal{G}$. Recall $r_\pi,r_*$ from~\eqref{eq:bridge-reference-density} and define, for any $(u,v)\in\mathcal S$,
\begin{align}\label{eq:conditional-operators}
    Kv(x):=& \E^{\pi}[v(Y) \,\vert \,X=x]=\int_{\Y} r_{\pi}(x,y)v(y) \, \nu(dy), \\
    K^*u(y):=& \E^{\pi}[u(X) \,\vert \,Y=y]=\int_{\X} r_{\pi}(x,y)u(x) \, \mu(dx). \notag
\end{align}
For any $(a,b) \in \mathcal{R}$, let $\mathcal{F}(a,b)$ be the marginal densities of $e^{a \oplus b}\pi$, i.e. 
\begin{equation*}
 \mathcal F(a,b)=\left(\frac{ (e^{a \oplus b} \pi)_{\X}}{d\mu}, \frac{(e^{a \oplus b} \pi)_{\Y}}{d \nu} \right) =
 \bigl(e^aK(e^b),\ e^bK^*(e^a)\bigr).
\end{equation*}
The functional $\mathcal{F}:\mathcal{R} \to \mathcal E_{\mathrm{marg}}$ is smooth in the sense of Fr\'{e}chet differentiability. At $(0,0)$, for any direction $(u,v) \in \mathcal{R}$,
\begin{equation}
    D\mathcal{F}_{(0,0)}(u,v)=\lim\limits_{t \to 0} \frac{\mathcal{F}(tu,tv)-\mathcal{F}(0,0)}{t}
 =(u+Kv,\ v+K^*u).
 \label{eq:linearized-scaling}
\end{equation}

It remains to prove that $D\mathcal{F}_{(0,0)}: \mathcal{R} \to \mathcal E_{\mathrm{marg}}$ is a bounded linear isomorphism with bounded inverse. Then according to \cite[Corollary 15.1]{Deimling1985}, there exists an inverse $\mathcal{G}$ of $\mathcal{F}$ that satisfies the desired properties. 

As $r_{\pi} \in C^{1,\alpha_0}(\Z)$, $D\mathcal{F}_{(0,0)}$ is a bounded linear operator. If $r_*=1$, then $r_{\pi} \equiv 1$. Note that 
\begin{align*}
u(x)+Kv(x)=&u(x)+\int_{\Y} v(y) \, \nu(dy), \\
v(y)+ K^*u(y)=& v(y) + \int_{\X} u(x) \, \mu(dx)=v(y).
\end{align*}
Therefore, according to \eqref{eq:linearized-scaling}, $D\mathcal{F}_{(0,0)}$ is bijective with bounded inverse. 

If $r_* \in (0,1)$, write $\tilde r= \frac{r_{\pi}-r_*}{1-r_*}$ and $\tilde \pi= \tilde r \, (\mu \otimes \nu)$. By Jensen's inequality and equation \eqref{eq:conditional-operators}, $\lVert Kv \rVert_{L^2(\mu)}\leq \lVert v \rVert_{L^2(\nu)}$. Moreover, since $\int u \, d \mu=0$, 
\begin{align*}
    K^*u(y)= (1-r_*) \int_{\X} \tilde r (x,y) u(x) \, \mu(dx)= (1-r_*) \E^{\tilde \pi}[u(X) \,\vert \,Y=y],
\end{align*}
and hence $\lVert K^* u \rVert_{L^2(\nu)} \leq (1-r_*) \lVert u \rVert_{L^2(\mu)}$.

Take an arbitrary $(\xi,\eta) \in \mathcal E_{\mathrm{marg}}$. We claim that there exists a unique pair $(u,v)\in\mathcal{R}$ such that $D\mathcal{F}_{(0,0)}(u,v)=(\xi,\eta)$. According to \eqref{eq:linearized-scaling}, it is equivalent to finding $u \in \mathcal{B}:=\{ u \in L^2(\mu): \,\int_{\X} u \, d\mu=0 \}$ such that 
\begin{align}\label{eq:scaling-linear-equation}
    u-KK^*u=\xi-K\eta.
\end{align}
It is clear that $T(u):= KK^*u+\xi-K\eta$ is a contraction on the Banach space $\mathcal{B}$. Therefore, there exists a unique solution to \eqref{eq:scaling-linear-equation} in $\mathcal{B}$ thanks to the Banach fixed point theorem. Since $r_\pi \in C^{1,\alpha_0}(\Z)$, $KK^*u \in C^{1,\sigma}(\X)$ for any $u \in \mathcal{B}$. It follows that the solution $u \in C^{1,\sigma}(\X)$ as well, and hence
 $D\mathcal{F}_{(0,0)}$ is a bijection. 

Let us show that the inverse of $D\mathcal{F}_{(0,0)}$ is bounded. Suppose $u$ satisfies \eqref{eq:scaling-linear-equation}. Then it holds that 
\begin{align*}
    r_* \lVert u \rVert_{L^2(\mu)} \leq \lVert u - KK^* u \rVert_{L^2(\mu)} \leq \lVert \xi-K\eta \rVert_{L^2(\mu)} \leq C (\lVert \xi \rVert_{C^{1,\sigma}(\X)}+\lVert \eta \rVert_{C^{1,\sigma}(\Y)}),
\end{align*}
where $C>0$ is a constant that is allowed to change from line to line. Since $r_{\pi} \in C^{1,\alpha_0}(\Z)$, $KK^*:L^2(\mu) \to C^{1,\sigma}(\X)$ is bounded. Therefore 
\begin{align*}
    \lVert u \rVert_{C^{1,\sigma}(\X)} &\leq \lVert KK^*u \rVert_{C^{1,\sigma}(\X)}+C (\lVert \xi \rVert_{C^{1,\sigma}(\X)}+\lVert \eta \rVert_{C^{1,\sigma}(\Y)}) \\
    &\leq C \lVert u \rVert_{L^2(\mu)}+C (\lVert \xi \rVert_{C^{1,\sigma}(\X)}+\lVert \eta \rVert_{C^{1,\sigma}(\Y)}) \\
    & \leq C (\lVert \xi \rVert_{C^{1,\sigma}(\X)}+\lVert \eta \rVert_{C^{1,\sigma}(\Y)}),
\end{align*}
which completes the proof. The argument for the boundedness in $L^\infty$ norm is the same. 
\end{proof}

\begin{lemma}\label{lem:fiber-projection}
Let $\mathcal{U}$ be the open neighborhood given in Lemma~\ref{lem:bridge-smooth} when $\mathcal{S}=L^\infty(\X) \times L^\infty(\Y)$, with the bridge defined in~\eqref{eq:bridge-scaling}. For any $(h,k) \in \mathcal{U}$, it holds that
\begin{equation}
\Ent(P\mid\pi)=\Ent(P\mid\Bridge{h\mu}{k\nu})
 +\Ent(\Bridge{h\mu}{k\nu}\mid\pi), \quad \forall P \in \Pi(h\mu,k\nu).
 \label{eq:pythagorean}
\end{equation}
In particular, we have that  $$\Bridge{h\mu}{k\nu}=\arg\min_{P \in \Pi(h\mu,k\nu)} \Ent(P \mid \pi).$$
Furthermore, for the projected Fisher information~\eqref{eq:projected-fisher}, if $(h,k) \in C^1(\X)\times C^1(\Y)$, then for any $P \in \Pi(h\mu,k\nu)$ with $\frac{dP}{d\pi} \in C^1(\Z)$
\begin{equation}
 \PFI(P\mid\pi)=
 \PFI(P\mid\Bridge{h\mu}{k\nu}).
 \label{eq:I-invariance}
\end{equation}
\end{lemma}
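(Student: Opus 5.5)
The plan is to exploit the Gibbs form $d\Bridge{h\mu}{k\nu}=e^{a\oplus b}\,d\pi$ with $(a,b)=\mathcal G(h,k)$ supplied by Lemma~\ref{lem:bridge-smooth}; in the $L^\infty$ setting, $a$ and $b$ are bounded. For $P\in\Pi(h\mu,k\nu)$, we have $P\ll\pi$ if and only if $P\ll\Bridge{h\mu}{k\nu}$, because the two reference measures have a bounded, strictly positive density with respect to each other. So both sides of \eqref{eq:pythagorean} are $+\infty$ simultaneously, and it suffices to treat the case $\Ent(P\mid\pi)<\infty$.

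In that case we write
\[
\log\frac{dP}{d\pi}=\log\frac{dP}{d\Bridge{h\mu}{k\nu}}+a(x)+b(y)
\]
and integrate against $P$. Since $a,b$ are bounded and $P$ has marginals $h\mu$ and $k\nu$, we get
\[
\int(a\oplus b)\,dP=\int a\,h\,d\mu+\int b\,k\,d\nu .
\]
This quantity depends only on the marginals, so it coincides with $\int(a\oplus b)\,d\Bridge{h\mu}{k\nu}=\Ent(\Bridge{h\mu}{k\nu}\mid\pi)$, where we used $\log(d\Bridge{h\mu}{k\nu}/d\pi)=a\oplus b$. This proves \eqref{eq:pythagorean}. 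The argmin statement follows because $\Ent(P\mid\Bridge{h\mu}{k\nu})\ge0$, with equality only when $P=\Bridge{h\mu}{k\nu}$; uniqueness therefore comes from strict positivity of relative entropy.

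For \eqref{eq:I-invariance}, set $R=\log(dP/d\pi)$ and $R'=\log(dP/d\Bridge{h\mu}{k\nu})=R-a\oplus b$. Then
\[
\nabla_xR'=\nabla_xR-\nabla a(x),\qquad \nabla_yR'=\nabla_yR-\nabla b(y).
\]
Since $\nabla a(X)$ is $\sigma(X)$-measurable, it cancels in $\nabla_xR-\E^P[\nabla_xR\mid X]$, and likewise $\nabla b(Y)$ cancels in the $y$-component. The two projected Fisher informations in \eqref{eq:projected-fisher} are therefore equal.

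The main obstacle is the regularity needed for this computation. We must know that $a\in C^1(\X)$ and $b\in C^1(\Y)$ when $(h,k)\in C^1$, since Lemma~\ref{lem:bridge-smooth} only guarantees $L^\infty$ potentials. To settle this, we will use the fixed-point form of the marginal equations:
\[
e^{a}=\frac{h}{K(e^b)},\qquad e^{b}=\frac{k}{K^*(e^a)} .
\]
Because $r_\pi\in C^{1,\alpha_0}$ and $e^a,e^b$ are bounded above and below, $K(e^b)$ and $K^*(e^a)$ are $C^1$ and bounded away from zero. Hence $a=\log h-\log K(e^b)$ is $C^1$ wherever $h\in C^1$ is positive, and the same argument gives $b\in C^1$. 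Positivity of $h$ and $k$ follows from $(h,k)\in\mathcal U$ being close to $(1,1)$ in $L^\infty$, provided the neighborhood is taken small enough.

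With $dP/d\pi\in C^1$ positive, $R'$ is then $C^1$, the conditional expectations are well defined, and the cancellation above is rigorous. If $dP/d\pi$ is only nonnegative, both Fisher informations are interpreted on $\{dP/d\pi>0\}$, and the same pointwise identity applies there.
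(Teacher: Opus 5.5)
Your proposal is correct and follows the paper's proof. For \eqref{eq:pythagorean} you use the same splitting $\log\frac{dP}{d\pi}=\log\frac{dP}{d\Bridge{h\mu}{k\nu}}+a\oplus b$, integrated against $P$ with the equal marginals; for \eqref{eq:I-invariance} you use the same cancellation of the $X$-measurable $\nabla a$ and the $Y$-measurable $\nabla b$ under conditional centering. Your extra steps make explicit what the paper uses without comment: boundedness of $a\oplus b$ for the infinite-entropy case, and $C^1$ regularity of $a,b$ via $e^a=h/K(e^b)$, $e^b=k/K^*(e^a)$ with $r_\pi\in C^{1,\alpha_0}$; the paper writes $\nabla_x a$ even though Lemma~\ref{lem:bridge-smooth} in the $L^\infty$ setting only supplies bounded potentials.
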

\begin{proof}
Denote $\alpha=h\mu$, $\beta=k\nu$. By direct computation of relative entropy
\begin{align*}
  \Ent(P \mid \pi)&= \int \log \frac{dP}{d\pi}  \, dP= \int \log \frac{dP}{d\Bridge{\alpha}{\beta} } + \log \frac{d\Bridge{\alpha}{\beta}}{d\pi}   \, dP \\
  &=\Ent(P \mid \Bridge{\alpha}{\beta})+ \int (a(x)+b(y)) \, dP\\
  &=\Ent(P \mid \Bridge{\alpha}{\beta})+\Ent(\Bridge{\alpha}{\beta} \mid \pi),
\end{align*}
where we used the structure of $\Bridge{\alpha}{\beta}$ and the fact that $P, \Bridge{\alpha}{\beta}$ have the same marginals.  This equality still holds when $\Ent(P \mid \pi)=+\infty$. 

Thanks to the equality $$\nabla_x \log \frac{dP}{d\pi}= \nabla_x \log \frac{dP}{d\Bridge{\alpha}{\beta}}+\nabla_x \log \frac{d \Bridge{\alpha}{\beta}}{d \pi}=\nabla_x \log \frac{dP}{d\Bridge{\alpha}{\beta}}+\nabla_x a(x),$$
it holds that 
$$ \nabla_x\log\frac{dP}{d\pi}
 -\E^{P}\!\left[\nabla_x\log\frac{dP}{d\pi}\vert X\right]=\nabla_x\log\frac{dP}{d\Bridge{\alpha}{\beta}}
 -\E^{P}\!\left[\nabla_x\log\frac{dP}{d\Bridge{\alpha}{\beta}}\vert X\right],$$
which yields \eqref{eq:I-invariance}.
\end{proof}

For stability of Schr\"odinger potentials and optimal couplings under
perturbations of the marginals, see \cite{NutzWiesel2023}.
The following proposition gives the local relative-entropy estimate needed here.

\begin{proposition}
\label{prop:bridge-stability}
There exist $\eta>0$ and $C<\infty$ such that, whenever
$\alpha=h\mu$ and $\beta=k\nu$ are probability measures with
\begin{equation}
 h \in L^\infty(\X), \qquad k \in L^\infty(\Y), \qquad \|h-1\|_\infty\le\eta,
 \qquad
 \|k-1\|_\infty\le\eta,
 \label{eq:bridge-neighborhood}
\end{equation}
the coupling $\Bridge{\alpha}{\beta}$ is well-defined and satisfies
\begin{equation}
\Ent(\Bridge{\alpha}{\beta}\mid\pi)
 \le C
\bigl(\Ent(\alpha\mid\mu)+\Ent(\beta\mid\nu)\bigr).
 \label{eq:bridge-stability}
\end{equation}
\end{proposition}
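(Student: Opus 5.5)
The plan is to use the $L^\infty$ version of Lemma~\ref{lem:bridge-smooth}, which makes the bridge well-defined, and then do a second-order comparison. The goal is to show that both sides of~\eqref{eq:bridge-stability} are comparable to the squared deviations $\|h-1\|^2+\|k-1\|^2$, measured in $L^2(\mu)$ and $L^2(\nu)$.

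\emph{Well-definedness and the smooth map.} I choose $\eta$ so small that $(h,k)$ lies in the neighborhood $\mathcal U$ of Lemma~\ref{lem:bridge-smooth} with $\mathcal S=L^\infty\times L^\infty$; note $\int h\,d\mu=\int k\,d\nu=1$, so $(h,k)\in\mathcal E_{\mathrm{marg}}$. Then $(a,b)=\mathcal G(h,k)$ satisfies $\|a\|_\infty+\|b\|_\infty\le C(\|h-1\|_\infty+\|k-1\|_\infty)\le C\eta$. By Lemma~\ref{lem:fiber-projection}, $\Bridge{\alpha}{\beta}=e^{a\oplus b}\pi$ is the entropy minimizer over $\Pi(\alpha,\beta)$.

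\emph{Upper bound on the bridge entropy.} Using the marginal constraints,
\[
\Ent(\Bridge{\alpha}{\beta}\mid\pi)=\int a\,d\alpha+\int b\,d\beta=\int a\,(h-1)\,d\mu+\int b\,(k-1)\,d\nu+\int a\,d\mu+\int b\,d\nu .
\]
Since $\int e^{a\oplus b}d\pi=1$ and $e^s\ge1+s$, the last two terms satisfy $\int a\,d\mu+\int b\,d\nu=\int(a\oplus b)\,d\pi\le0$. By Cauchy--Schwarz the first two terms are at most $\|a\|_{L^2(\mu)}\|h-1\|_{L^2(\mu)}+\|b\|_{L^2(\nu)}\|k-1\|_{L^2(\nu)}$.

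\emph{The linear $L^2$ estimate.} This is the key step. I need $\|a\|_{L^2(\mu)}+\|b\|_{L^2(\nu)}\le C(\|h-1\|_{L^2(\mu)}+\|k-1\|_{L^2(\nu)})$. Linearization gives $D\mathcal F_{(0,0)}(a,b)=(h-1,k-1)-\mathcal N(a,b)$ with a quadratic remainder $|\mathcal N(a,b)|\le C(\|a\|_\infty+\|b\|_\infty)(|a|+|b|+K|b|+K^*|a|)$ pointwise. The inverse of $D\mathcal F_{(0,0)}$ is bounded in $L^2$ by the contraction argument with constant $r_*^{-1}$ from the proof of Lemma~\ref{lem:bridge-smooth}, which already worked in $L^2(\mu)$. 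So
\[
\|(a,b)\|_{L^2}\le C\|(h-1,k-1)\|_{L^2}+C\eta\|(a,b)\|_{L^2},
\]
and the last term is absorbed for $\eta$ small. There is a subtlety in the normalization $\int a\,d\mu=0$ in $\mathcal R$: since $\int a\,d\mu=0$, the constant-shift ambiguity between $a$ and $b$ is fixed, and this $L^2$ inversion is exactly where that normalization is used. This yields $\Ent(\Bridge{\alpha}{\beta}\mid\pi)\le C(\|h-1\|^2_{L^2(\mu)}+\|k-1\|^2_{L^2(\nu)})$.

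\emph{Comparison with marginal entropies.} It remains to show $\|h-1\|^2_{L^2(\mu)}\le C\,\Ent(\alpha\mid\mu)$ when $\|h-1\|_\infty\le\eta<1/2$, and likewise for $k$. The function $\phi(s)=s\log s-s+1$ satisfies $\phi(s)\ge c(s-1)^2$ for $s\in[1/2,3/2]$, and $\Ent(\alpha\mid\mu)=\int\phi(h)\,d\mu$ because $\int h\,d\mu=1$. This gives the comparison and concludes~\eqref{eq:bridge-stability}.
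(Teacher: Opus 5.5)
Your proof is correct, but it takes a genuinely different route from the paper's. The paper never estimates the potentials. It writes down the explicit competitor $\tilde P=(r_\pi+u\oplus v)\,\mu\otimes\nu$ with $u=h-1$, $v=k-1$, which is a nonnegative element of $\Pi(\alpha,\beta)$ once $\eta<r_*/4$. With $z=(u\oplus v)/r_\pi$, a second-order Taylor bound gives $\Ent(\tilde P\mid\pi)\le\int z^2\,d\pi\le 2r_*^{-1}\bigl(\|u\|_{L^2(\mu)}^2+\|v\|_{L^2(\nu)}^2\bigr)$. The paper then concludes by the minimality of $\Bridge{\alpha}{\beta}$ from Lemma~\ref{lem:fiber-projection}; its lower bound on $\Ent(\alpha\mid\mu)$ is the same as your last step.

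You instead bound the bridge itself. The identity $\Ent(\Bridge{\alpha}{\beta}\mid\pi)=\int a\,d\alpha+\int b\,d\beta$ and $\int(a\oplus b)\,d\pi\le0$ reduce everything to $L^2$ control of $(a,b)$. You obtain that control from the gap $\|KK^*u\|_{L^2(\mu)}\le(1-r_*)\|u\|_{L^2(\mu)}$ on mean-zero $u$, together with absorption of the quadratic remainder using the $L^\infty$-Lipschitz bound on $\mathcal G$. Your pointwise remainder bound should be read componentwise, as $|\mathcal N_1|\le C\eta(|a|+K|b|)$ and $|\mathcal N_2|\le C\eta(|b|+K^*|a|)$, which is exactly what the absorption needs.

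The paper's argument is shorter, gives the explicit constant $C=4(1+\eta)/r_*$, and uses the implicit function theorem only to know that the bridge exists. Yours never uses minimality. As a by-product it yields the potential stability estimate $\|a\|_{L^2(\mu)}+\|b\|_{L^2(\nu)}\le C\bigl(\|h-1\|_{L^2(\mu)}+\|k-1\|_{L^2(\nu)}\bigr)$, at the cost of constants depending on the size of $\mathcal U$ and on $\|D\mathcal G\|$.
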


\begin{proof}
Thanks to Lemma~\ref{lem:bridge-smooth}, there exists $\eta>0$ small enough so that $ \Bridge{\alpha}{\beta}$ is well-defined. With $r_*$ from~\eqref{eq:bridge-reference-density}, we choose $\eta<r_*/4$. For any $h,k$ satisfying \eqref{eq:bridge-neighborhood}, set $u=h-1$, $v=k-1$, and
\begin{align*}
    d\tilde P(x,y)=\left(r_\pi(x,y)+u(x)+v(y) \right) \, d\mu(x) \, d\nu(y).
\end{align*}
Clearly, $\tilde P$ is a nonnegative measure on $\Z$. For any $f \in C_b(\X)$, 
\begin{align*}
    \int_{\Z} f(x) \, d \tilde P (x,y)&= \int_\X f(x) \, d\mu(x)+ \int_\X f(x) u(x) \, d\mu(dx)  \\
    &=\int_\X f(x) \, d \alpha(x),
\end{align*}
where we used $\pi \in \Pi(\mu,\nu)$ and $\int_\Y v(y) \, d\nu(y)=0$. Hence, the $\X$-marginal of $\tilde P$ is $\alpha$. Similarly, its $\Y$-marginal is $\beta$, i.e. $\tilde P \in \Pi(\alpha,\beta)$. 

By Taylor expansion, it can be verified that for any $\delta \in (0,1)$
\begin{align}\label{eq:entropy-taylor-bounds}
    \frac{z^2}{2(1+\delta)} \leq (1+z)\log(1+z)-z \leq \frac{z^2}{2(1-\delta)}, \qquad |z|\leq \delta. 
\end{align}
Note that $\frac{d\tilde P}{d\pi}=1+\frac{u\oplus v}{r_{\pi}}$ with  $\frac{u\oplus v}{r_{\pi}} \in [-1/2,1/2]$. Applying \eqref{eq:entropy-taylor-bounds} with $z=\frac{u\oplus v}{r_{\pi}}$ $\delta=1/2$,
\begin{align*}
    \Ent(\tilde P \mid \pi)&= \int_{\Z}(1+z) \log (1+z) \, d\pi= \int_\Z (1+z) \log(1+z) -z \, d\pi \leq \int_\Z z^2 \, d \pi  \\
    &=\int_\Z \frac{(u(x)+v(y))^2}{r_\pi(x,y)} \, d\mu(x) \, d\nu(y)\leq 2 r_*^{-1}  (\lVert u \rVert_{L^2(\mu)}^2+\lVert v \rVert_{L^2(\nu)}^2).
\end{align*}
On the other hand, applying \eqref{eq:entropy-taylor-bounds} with $\delta=\eta$, we obtain that 
\begin{align*}
    \Ent(\alpha \mid \mu)&= \int_\X (1+u) \log(1+u) -u \, d\mu \geq \frac{1}{2(1+\eta)} \lVert u \rVert_{L^2(\mu)}^2, \\
     \Ent(\beta \mid \nu)&= \int_\Y (1+v) \log(1+v) -v \, d\nu \geq \frac{1}{2(1+\eta)} \lVert v \rVert_{L^2(\nu)}^2.
\end{align*}
Choosing $C=4(1+\eta)/r_*$, we complete the proof. 
\end{proof}

\subsection{A projected log-Sobolev inequality}

\begin{proposition}\label{prop:global-plsi}
Fix $\underline q,\overline q,L>0$. There exists $C<\infty$, depending only on $\underline q,\overline q,L$ and
the fixed torus, such that whenever $Q \in \Pc(\Z)$ satisfying
\begin{equation}
 q:=\frac{dQ}{dz} \in C^1(\Z), \qquad 0< \underline q \le q\le \overline q,\qquad \|\nabla\log q\|_\infty\le L
 \label{eq:plsi-reference-class}
\end{equation}
it holds that 
\begin{equation}\label{eq:global-plsi}
 \Ent(P\mid Q)\le C\,\PFI(P\mid Q), \quad \forall \, P\in\Pi(Q_\X,Q_\Y).
\end{equation}

\end{proposition}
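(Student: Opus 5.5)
The plan is to reduce \eqref{eq:global-plsi} to an ordinary logarithmic Sobolev inequality for a tilted reference measure $Q_{a,b}:=Z^{-1}e^{a\oplus b}Q$. The first step is a variational comparison that uses only $P\in\Pi(Q_\X,Q_\Y)$. Since $\int(a\oplus b)\,dP=\int(a\oplus b)\,dQ$ for any bounded $a,b$,
\[
 \Ent(P\mid Q)=\Ent(P\mid Q_{a,b})+\int(a\oplus b)\,dQ-\log\int e^{a\oplus b}\,dQ\le \Ent(P\mid Q_{a,b}),
\]
where the inequality is Jensen's. So it suffices to find $(a,b)$, depending on $P$, such that:
\begin{enumerate}[label=(\roman*)]
\item $Q_{a,b}$ satisfies an LSI with a constant controlled by $\underline q,\overline q,L$;
\item $\E^P|\nabla R-\nabla(a\oplus b)|^2\le C\,\PFI(P\mid Q)$, where $R=\log(dP/dQ)$.
\end{enumerate}
As in Lemma~\ref{lem:weak-chain-rule}, I would work with $dP/dQ\in C^1$ and positive, and remove this by mollification at the end.

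The second step is to identify the conditional means appearing in \eqref{eq:projected-fisher}. Because $p_\Y=q_\Y$, integrating $\nabla_y\log p-\nabla_y\log q$ against $p(\cdot,y)\,dx$ gives
\[
 \E^P[\nabla_yR\mid Y]=\E^{Q}[\nabla_y\log q\mid Y]-\E^{P}[\nabla_y\log q\mid Y],
\]
and symmetrically for $x$. Hence both conditional means are bounded by $2L$ pointwise, by \eqref{eq:plsi-reference-class}. By Pinsker and the chain rule for entropy,
\[
 \E^P\bigl|\E^P[\nabla_yR\mid Y]\bigr|^2\le 2L^2\,\Ent(P\mid Q).
\]
This shows the conditional-mean terms are small when the entropy is small. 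However, absorbing them directly into a fiberwise LSI is circular, since the constant $2L^2$ is not small.

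The third step is the construction of $(a,b)$, and I expect it to be the main obstacle. Let $g(x):=\E^P[\nabla_xR\mid X=x]$ and $h(y):=\E^P[\nabla_yR\mid Y=y]$, both bounded by $2L$.
\begin{itemize}
\item Take $\nabla a$ to be the weighted Helmholtz projection of $g$ onto gradients in $L^2(Q_\X)$, and $\nabla b$ the analogous projection of $h$ in $L^2(Q_\Y)$.
\item Elliptic theory on the torus for $\nabla\cdot(q_\X\nabla a)=\nabla\cdot(q_\X g)$, whose coefficients are bounded above and below, should give $\operatorname{osc}a\le C(L,\underline q,\overline q)$, and likewise for $b$.
\item With this oscillation bound, the Holley--Stroock perturbation of the flat LSI (transferred to $Q$ through $\underline q\le q\le\overline q$) yields (i).
\end{itemize}
The delicate point for (ii) is the non-gradient remainder $g-\nabla a$, and its $y$-analogue. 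Since $\nabla_xR$ is a gradient in $x$, the remainder equals minus the projection, off the gradients, of the fluctuation $\nabla_xR-g$ averaged against $P_{\Y\mid x}$. I would try to bound its $L^2$ norm by the $x$-fluctuation part of $\PFI(P\mid Q)$ through this duality. If the weighted averaging does not close the estimate, my fallback is a compactness-contradiction argument, assuming a sequence with $\Ent(P_n\mid Q_n)=1$ and $\PFI(P_n\mid Q_n)\to0$; the bounds \eqref{eq:plsi-reference-class} give compactness of the $Q_n$.
\begin{itemize}
\item In the limit, $\nabla_xR$ would depend only on $x$ and $\nabla_yR$ only on $y$, so $R=a\oplus b$.
\item The Pythagorean identity of Lemma~\ref{lem:fiber-projection}, applied with $Q$ in place of $\pi$, then forces $P=Q$, contradicting $\Ent=1$.
\item This fallback requires separate handling of small entropies, where the second step's linearized bound applies.
\end{itemize}

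Combining the three steps gives
\[
 \Ent(P\mid Q)\le \Ent(P\mid Q_{a,b})\le C\,\E^P|\nabla R-\nabla(a\oplus b)|^2\le C\,\PFI(P\mid Q),
\]
with $C$ depending only on $\underline q,\overline q,L$ and the torus.
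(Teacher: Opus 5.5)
There is a genuine gap, and it sits exactly where you expect the main obstacle. Your reduction $\Ent(P\mid Q)\le\Ent(P\mid Q_{a,b})$ and item (i) are fine, but estimate (ii) is never established, and the duality argument does not close. Write $f:=\nabla_xR-g$. The only way to turn $g(x)=\int\nabla_xR(x,y)\,P_{\Y\mid x}(dy)$ into a gradient is to replace $P_{\Y\mid x}$ by an $x$-independent density $\rho$. This shows that $g-\nabla a$ is minus the projection off gradients in $L^2(Q_\X)$ of $\int f(\cdot,y)\rho(y)\,dy$. But bounding that quantity by $\E^P[|f|^2\mid X]^{1/2}$ costs $\sup_x\int\rho(y)^2/p(y\mid x)\,dy$, where $p(\cdot\mid x)$ is the density of $P_{\Y\mid x}$; in other words, you need a pointwise lower bound on $dP/dQ$. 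No such bound is available. Take $Q$ uniform and $dP/dQ$ vanishing on a thin tube $\{|x_1-y_1|<\delta\}$; this is compatible with the marginal constraints and gives $\Ent(P\mid Q)=O(\delta)$, yet every $\rho$ charges the zero set of $p(\cdot\mid x)$ for a positive-measure set of $x$. Assuming $dP/dQ$ positive and $C^1$ before mollifying does not help, since the constant would then depend on $\min dP/dQ$. Note also that the tilt buys no smallness: the non-gradient part of $g$ is generically of the same order as $g$. So (ii) needs the same kind of control as the untilted bound $\E^P|\CondScore{P}{Q}|^2\le C\,\PFI(P\mid Q)$, which is what you get with $a=b=0$.

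Your fallback covers only the easy regimes. Large entropies are harmless: $|\CondScore{P}{Q}|\le2\sqrt2L$ and the LSI for $Q$ give $\Ent(P\mid Q)\le C(\PFI(P\mid Q)+8L^2)$, so the normalization $\Ent(P_n\mid Q_n)=1$ is neither available nor needed. Entropies bounded below can be handled by compactness, provided you justify convergence of the entropies and rule out degeneration of $dP/dQ$ in the limit before writing $R=a\oplus b$ (the paper uses $|\nabla\log(u+\delta)|\le\sqrt2L$). The real content lies in the small-entropy regime, and there your second-step bound is the circular one you already rejected. The missing idea is a second, renormalized compactness argument. It is most naturally run in Hellinger distance, which is what the $H^1$ bound on $u=\sqrt{dP/dQ}$ controls.

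With $m=\CondScore{P}{Q}$, the paper writes $m_\X=\E^Q[(1-u^2)\nabla_x\log q\mid X]$ and applies conditional Cauchy--Schwarz to get $\|m\|_{L^2(Q)}^2\le8L^2\Hell^2(P,Q)$, the Hellinger analogue of your Pinsker bound. It then proves $\Hell^2(P,Q)\le C_H\,\PFI(P\mid Q)$ by contradiction. After showing $u_n\to1$, it passes to the limit in $v_n=(u_n-1)/\eta_n$ with $\eta_n=\Hell(P_n,Q_n)$. The marginal constraints give $\E^Q[v\mid X]=\E^Q[v\mid Y]=0$. Meanwhile $\PFI(P_n\mid Q_n)/\eta_n^2\to0$ and $\|m_n\|_{L^2(Q_n)}\le2\sqrt2L\,\eta_n$ force $v=A(x)+D(y)$. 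Hence $\int v^2\,dQ=0$, contradicting $\int v^2\,dQ=1$. Finally, the plain LSI for $Q$, combined with $4\int|\nabla u|^2\,dQ=\PFI(P\mid Q)+\|m\|^2_{L^2(Q)}$, finishes the proof with no tilting at all.
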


\begin{proof}
\noindent\emph{Step 1.}
Set
\begin{equation}
 u:=\sqrt{\frac{dP}{dQ}},\qquad \Hell^2(P,Q):=\int(u-1)^2\,dQ.
 \label{eq:plsi-density-hellinger}
\end{equation}
Use the conditional mean from~\eqref{eq:relative-score-fields}:
\begin{align}\label{eq:conditionalmean}
 m(x,y):=\CondScore{P}{Q}(x,y)=(m_\X(x),m_\Y(y)).
\end{align}
Since $P$ and $Q$ have the same marginals,
\begin{equation}\label{eq:plsi-marginal-constraints}
 \E^{Q}[u^2\vert X]=\E^{Q}[u^2\vert Y]=1.
\end{equation}
In particular, $\int u^2\,d Q=1$ and $0\le \Hell^2(P,Q)\le2$. By direct computation, 
\begin{align*}
    \PFI(P\mid Q)=\int|2\nabla u-um|^2\,d Q.
\end{align*}
Due to the identities $ \int  (2\nabla u-um) u m  \, dQ=0$, and $\int |um|^2 \,d Q= \int |m|^2 \, dP= \int |m|^2 \, dQ$, it holds that 
\begin{equation}\label{eq:plsi-sqrt-identities}
 4\int|\nabla u|^2\,d Q
 =\PFI(P\mid Q)+\|m\|_{L^2(Q)}^2.
\end{equation}

\smallskip
\noindent\emph{Step 2.} We provide a uniform bound for $m$. Let $q_\X(x)=\int_\Y q(x,y)\,dy$. The identity
$\int_\Y u(x,y)^2q(x,y)\,dy=\allowbreak q_\X(x)$ can be differentiated weakly.
Indeed, $u\in H^1(Q)$ implies $u^2q\in W^{1,1}(\Z)$, and integration in $y$
commutes with weak differentiation. Thus
\[
 \nabla_x q_\X(x)
 =\int_\Y\bigl(2u\nabla_xu\,q+u^2\nabla_xq\bigr)(x,y)\,dy.
\]
Since $P,Q$ have the same marginals, and $u^2=\frac{dP}{dQ}$, it can be directly computed that 
\begin{align*}
    m_{\X}(x)=\frac{1}{q_\X(x)}\int_\Y 2u(x,y)\nabla_xu(x,y)q(x,y)\,dy,
\end{align*}
and hence 
\begin{align*}
     \nabla_x\log q_\X(x)
 &=m_{\X}(x)
   +\frac{1}{q_\X(x)}\int_\Y u(x,y)^2\nabla_xq(x,y)\,dy\\
 &=m_{\X}(x)+\E^{P}[\nabla_x\log q\vert X=x].
\end{align*}
Together with the equality $\nabla_x \log q_{\X}(x)= \E^{Q}[\nabla_x \log q \vert X=x]$, $\E^{P}[\nabla_x \log q \vert X=x]=\allowbreak\E^{Q} [u^2 \nabla_x \log q \vert X=x]$, we obtain 
\begin{equation*}
 \begin{aligned}
 m_{\X}
 &=\nabla_x\log q_\X-\E^{P}[\nabla_x\log q\vert X]\\
 &=\E^{Q}[(1-u^2)\nabla_x\log q\vert X]
 \end{aligned}
\end{equation*}
and similarly 
\begin{align*}
    m_{\Y}=\E^{Q}[(1-u^2) \nabla_y \log q \vert Y].
\end{align*}

It follows from $\|\nabla\log q\|_\infty\le L$ that $\|m_{\X}\|_{\infty}\le2L$ and $\|m_{\Y}\|_{\infty}\le2L$.
Moreover, conditional Cauchy--Schwarz and
\eqref{eq:plsi-marginal-constraints} yield
\[
 \begin{aligned}
 |m_{\X}|^2
 &\le L^2\E^{Q}[(u-1)^2\vert X]\,
              \E^{Q}[(u+1)^2\vert X]\\
 &\le4L^2\E^{Q}[(u-1)^2\vert X],
 \end{aligned}
\]
where we use $\E^{Q}[u\vert X]\le \sqrt{\E^{Q}[u^2\vert X]}=1$. Integrating against $Q_\X$ and repeating
the argument for $m_{\Y}$, with $\Hell$ as in~\eqref{eq:plsi-density-hellinger}, we get that
\begin{equation}\label{eq:plsi-score-bounds}
 \|m\|_\infty\le 2\sqrt{2}L,\qquad
 \|m\|_{L^2(Q)}^2\le 8L^2 \Hell^2(P,Q).
\end{equation}

\smallskip
\noindent\emph{Step 3.}
We establish a uniform  estimate:
\begin{equation}\label{eq:plsi-hellinger-gap}
 \Hell^2(P,Q)\le C_H\,\PFI(P\mid Q),
\end{equation}
where $C_H$ depends only on $\underline q,\overline q,L$ and the torus.
Suppose that no such constant exists. 
Take a sequence of $Q_n \in \Pc(\Z)$ satisfying~\eqref{eq:plsi-reference-class} and $P_n \in \Pi((Q_n)_\X,(Q_n)_\Y)$, such that
\begin{equation}
\eta_n:=\Hell(P_n,Q_n)>0,\quad \PFI_n:=\PFI(P_n\mid Q_n)<\infty,
 \quad u_n:=\sqrt{dP_n/dQ_n},\quad \frac{\PFI_n}{\eta_n^2}\to 0.
 \label{eq:plsi-contradiction-sequence}
\end{equation}
Since $\eta_n^2\le2$, we have $\PFI_n\to0$. Denote the density of $Q_n$ by $q_n$. Equations
\eqref{eq:plsi-sqrt-identities}, \eqref{eq:plsi-score-bounds}, together with
$\underline q\le q_n\le \overline q$, show that $(u_n)$ is bounded in $H^1(\Z)$.
Since $|\nabla q_n|\le\overline q L$, according to Arzel\`a--Ascoli and Rellich compactness, we obtain 
\[
 \begin{aligned}
 q_n&\to q &&\text{uniformly on }\Z,\\
 u_n&\to u &&\text{strongly in }L^2(\Z),\\
 u_n&\rightharpoonup u &&\text{weakly in }H^1(\Z).
 \end{aligned}
\]
The limiting density $q$ is Lipschitz, satisfies $\underline q\le q\le \overline q$, and has
total mass $1$. Denote $Q:=q\,dz$. We have that $u\ge0$ and $\int u^2\,d Q=1$.
Furthermore, $u_n^2q_n\to u^2q$ in $L^1(\Z)$, so that $u^2Q$ and $Q$ have the
same marginals.

Define $m_n$ similarly as in \eqref{eq:conditionalmean} with $P,Q$ replaced by $P_n,Q_n$. By the uniform $L^\infty$ bound in \eqref{eq:plsi-score-bounds}, we extract a convergent subsequence
 $m_n\rightharpoonup^*\widehat m$ in $L^\infty(\Z)$, where $\widehat m_{\X}$ depends
only on $x$ and $\widehat m_{\Y}$ only on $y$. These coordinate subspaces are
preserved under the indicated weak limits. Since $\PFI_n \to 0$, 
$2\nabla u_n-u_nm_n\to0$ in $L^2(\Z)$.
According to the strong convergence of $u_n$, and the uniform bound on $m_n$ \eqref{eq:plsi-score-bounds}, we have that $u_nm_n\rightharpoonup u\widehat m$ in $L^2(\Z)$, and hence
\begin{equation}\label{eq:plsi-nonlinear-zero}
 2\nabla u=u\widehat m.
\end{equation}

We next show that $u$ is bounded from above and below away from zero.
For each $\delta>0$, due to chain rule
\[
 \left|\nabla\log(u+\delta)\right|
 =\frac{u|\widehat m|}{2(u+\delta)}\le \sqrt{2}L.
\]
The function $\log(u+\delta)$ is in $L^2(\Z)$ and has bounded weak gradient.
It therefore has a Lipschitz version on the torus. Connectedness
and compactness imply
\[
 \operatorname*{ess\,sup}(u+\delta)
 \le C_{\mathrm{Har}}\operatorname*{ess\,inf}(u+\delta),
 \qquad
 C_{\mathrm{Har}}=\exp\!\left(\sqrt{2}L \operatorname{diam}(\Z)\right),
\]
independently of $\delta$. Letting $\delta\to 0$, and using
$\int u^2\,d Q=1$, gives $C_{\mathrm{Har}}^{-1}\le u\le C_{\mathrm{Har}}$ almost everywhere.
Hence $\log u^2\in W^{1,\infty}$, and \eqref{eq:plsi-nonlinear-zero} yields
\[
 \nabla_x\log u^2=\widehat m_{\X}(x),\qquad
 \nabla_y\log u^2=\widehat m_{\Y}(y).
\]
It follows that $\log u^2=A(x)+D(y)$ for some functions $A \in L^\infty(\X)$ and $D \in L^\infty(\Y)$.

Since $u^2Q$ and $Q$ have the same marginals, and $A,D$ are bounded,
\[
 \int (u^2-1)\log u^2\,d Q
 =\int (u^2-1)(A(x)+D(y))\,d Q=0.
\]
The integrand on the left is nonnegative and vanishes only when $u=1$.
Thus $u=1$ almost everywhere. Uniform convergence of $q_n$ and strong
convergence of $u_n$ now imply
\begin{equation}\label{eq:plsi-hellinger-to-zero}
 \eta_n^2=\int_{\Z} (u_n-1)^2 \, dQ_n\to0.
\end{equation}

To derive a contradiction, with $\eta_n$ as in~\eqref{eq:plsi-contradiction-sequence}, let us take
\(
 v_n=\frac{u_n-1}{\eta_n}.
\)
Then $\int v_n^2\,d Q_n=1$, while according to
\eqref{eq:plsi-sqrt-identities}, \eqref{eq:plsi-score-bounds},
\[
 4\int|\nabla v_n|^2\,d Q_n
 \le\frac{\PFI_n}{\eta_n^2}+8L^2,
 \qquad
 \left\|\frac{m_n}{\eta_n}\right\|_{L^2(Q_n)}\le 2\sqrt{2}L.
\]
Thus $(v_n)$ is bounded in $H^1$. After a further extraction,
\[
 v_n\to v\ \text{strongly in }L^2(\Z),\qquad
 v_n\rightharpoonup v\ \text{weakly in }H^1(\Z),\qquad
 \int v^2\,d Q=1.
\]
As $\E^{Q_n}[u_n^2 \vert X]=1$ and $u_n=\eta_nv_n+1$, we have
\[
 2\E^{Q_n}[v_n\vert X]+\eta_n\E^{Q_n}[v_n^2\vert X]=0,
\]
and similarly for the conditional expectation w.r.t. $Y$. Since $\eta_n \to 0$ in \eqref{eq:plsi-hellinger-to-zero},
$v_n \to v$ in $L^2(\Z)$, and $q_n \to q$ uniformly, letting $n\to \infty$ in the equality above  we get that 
\begin{equation}\label{eq:plsi-limit-constraints}
 \E^{Q}[v\vert X]=\E^{Q}[v\vert Y]=0.
\end{equation}

As $\frac{\PFI_n}{\eta_n^2} \to 0$ and $q_n \geq \underline q$, by direct computation
\[
 2\nabla v_n-\frac{m_n}{\eta_n}-v_nm_n
 \to 0\quad\text{in }L^2(\Z).
\]
The product term converges to zero in $L^1(\Z)$, 
\[
 \int|v_nm_n|\,dz
 \le\frac{1}{\underline q}\|v_n\|_{L^2(Q_n)}\|m_n\|_{L^2(Q_n)}
 \le\frac{2\sqrt{2}L}{\underline q} \eta_n\to 0.
\]
Therefore $m_n/\eta_n$ is bounded in $L^2(\Z)$. Extract a weak $L^2(\Z)$ limit $\tilde m$ of $m_n/\eta_n$. It can be easily verified that $\tilde m_{\X}$ and $\tilde m_{\Y}$ depend only on $x$ and $y$ respectively, and the same for $\nabla_xv$ and $\nabla_yv$. Hence  $v=A_v(x)+D_v(y)$, with
$A_v\in H^1(\X)$ and $D_v\in H^1(\Y)$.
Using \eqref{eq:plsi-limit-constraints}, we obtain
\[
 \int v^2\,d Q
 =\int vA_v(x)\,d Q+\int vD_v(y)\,d Q=0.
\]
This contradicts $\int v^2\,d Q=1$ and proves
\eqref{eq:plsi-hellinger-gap}.

\smallskip
\noindent\emph{Step 4.} With the definition of relative entropy and Hellinger divergence~\eqref{eq:relative-entropy} and~\eqref{eq:plsi-density-hellinger}, the  logarithmic Sobolev inequality on the fixed torus yields
\[
 \Ent(P\mid Q)\le C_{\mathrm{LS}}\,4\int|\nabla u|^2\,d Q,
\]
where the constant $C_{\mathrm{LS}}$ only depends on $\underline q,\overline q$. Combining this inequality with \eqref{eq:plsi-sqrt-identities},
\eqref{eq:plsi-score-bounds}, and
\eqref{eq:plsi-hellinger-gap} yields
\[
 \begin{aligned}
 \Ent(P\mid Q)
 &\le C_{\mathrm{LS}}\bigl(\PFI(P\mid Q)+8L^2\Hell^2(P,Q)\bigr)\\
 &\le C_{\mathrm{LS}}(1+8L^2C_H)\,\PFI(P\mid Q).
 \end{aligned}
\]
This proves \eqref{eq:global-plsi} with
$C=C_{\mathrm{LS}}(1+8L^2C_H)$.
\end{proof}

The following result is a direct consequence of~\eqref{eq:global-plsi}.
\begin{corollary}\label{cor:nearby-plsi}
There exist $\eta\in(0,1)$ and $\rho>0$, depending only on $\pi$ and
the fixed torus, such that for every probability measure $Q \in \Pc(\Z)$ satisfying
\[
 \left\|\frac{dQ}{d\pi}-1\right\|_{C^1}\le\eta,
\]
it holds that 
\begin{equation}
 \Ent(P\mid Q)\le\frac1{2\rho}\PFI(P\mid Q),
 \qquad \forall \, P\in\Pi(Q_\X,Q_\Y).
 \label{eq:nearby-plsi}
\end{equation}
\end{corollary}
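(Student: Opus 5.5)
The plan is to deduce the corollary from Proposition~\ref{prop:global-plsi}, applied with constants $\underline q,\overline q,L$ chosen once and for all in terms of $\pi$ and $\eta$. The task is to check that every $Q$ with $\|dQ/d\pi-1\|_{C^1}\le\eta$ belongs to the reference class~\eqref{eq:plsi-reference-class}, with constants that do not depend on $Q$. After that, \eqref{eq:global-plsi} gives $\Ent(P\mid Q)\le C\,\PFI(P\mid Q)$ for all $P\in\Pi(Q_\X,Q_\Y)$, and we set $\rho:=1/(2C)$.

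\textbf{Bounds on $\pi$.} First record the regularity of $\pi$. By~\eqref{eq:pi-form}, its Lebesgue density is $\pi(x,y)=\exp(\phi(x)+\psi(y)-c(x,y)/\epsilon-U(x)-V(y))$. The Schr\"odinger potentials satisfy the fixed-point relations
\[
\phi(x)=-\log\int_\Y e^{\psi(y)-c(x,y)/\epsilon}\,\nu(dy),
\]
and symmetrically for $\psi$. Since $c\in C^{1,\alpha_0}$, this makes $\phi$ and $\psi$ bounded and $C^1$, with $\nabla\phi(x)=\E^{\pi}[\nabla_x c\mid X=x]/\epsilon$ bounded, and similarly for $\nabla\psi$. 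Hence $\pi$ has a $C^1$ density on the compact torus, with
\[
0<\pi_*:=\min_\Z\pi\le\pi\le\pi^*:=\max_\Z\pi,\qquad L_\pi:=\|\nabla\log\pi\|_\infty<\infty.
\]
This positivity and upper bound are the same ones already used for $\pi$ in Proposition~\ref{prop:wasserstein-smoothing}.

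\textbf{Verifying the class.} Write $g:=dQ/d\pi$, so that $q=g\pi$. Fix $\eta\le1/2$; then $\|g-1\|_\infty\le\eta$ and $\|\nabla g\|_\infty\le\eta$. This yields
\[
q\in C^1(\Z),\qquad \tfrac12\pi_*\le q\le\tfrac32\pi^*,
\]
and, using $g\ge1/2$,
\[
|\nabla\log q|\le|\nabla\log\pi|+\frac{|\nabla g|}{g}\le L_\pi+2\eta\le L_\pi+1.
\]
So we take $\underline q=\pi_*/2$, $\overline q=3\pi^*/2$ and $L=L_\pi+1$. These depend only on $\pi$, so the constant $C$ from Proposition~\ref{prop:global-plsi} depends only on $\pi$ and the torus. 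The same holds for $\rho$, and any $\eta\in(0,1/2]$ works.

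\textbf{Main obstacle.} The only nontrivial point is justifying that $\pi$ has a $C^1$, strictly positive density with $\nabla\log\pi$ bounded. This requires the Schr\"odinger potentials $\phi,\psi$ to be $C^1$, which I would obtain from the integral representation above by differentiating under the integral sign, which is allowed since $\nabla_x c$ is bounded and continuous. Everything else is bookkeeping.
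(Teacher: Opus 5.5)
Your proposal is correct and follows the paper's route: the paper presents the corollary as a direct consequence of Proposition~\ref{prop:global-plsi}, and you make explicit the check that the $C^1$-neighborhood of $\pi$ lies in the class \eqref{eq:plsi-reference-class}, with $\underline q=\pi_*/2$, $\overline q=3\pi^*/2$ and $L=L_\pi+1$ depending only on $\pi$. Your derivation of the $C^1$ regularity and two-sided bounds on $\pi$ from the Schr\"odinger fixed-point relations is sound, and it supplies a fact that the paper uses implicitly.
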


\section{Exponential convergence}
\label{sec:convergence-rate}

We now prove the exponential convergence rate given by~\eqref{eq:main-wasserstein-bound}.
In Subsection~\ref{sec:entropy-differential}, we first rewrite the drift of the Fokker--Planck equation~\eqref{eq:extended} and use this new writing to derive the entropy estimate~\eqref{eq:global-entropy-differential}.
Once this bound is obtained, we then move on to the proof of Theorem~\ref{thm:main}, which will be a consequence of the log-Sobolev inequality~\eqref{eq:compact-coercivity} along our flow (only valid on a positive-time class, meaning that for each $s>0$ the estimate holds for all $t\geq s$, with $\kappa_s>0$ depending only on $s$ and the model data).
Throughout this section, Assumption~\ref{ass:regularity} holds. Hence, all results of Sections~\ref{sec:preliminaries} and~\ref{sec:local-static-estimates} apply. We use the notation of Subsection~\ref{sec:notation}.

For laws with positive densities and logarithmic gradient in $L^2(P)$, the ordinary relative Fisher information is
\begin{equation}
 \FI(P\mid Q):=\int\left|\nabla\log\frac{dP}{dQ}\right|^2\,dP.
 \label{eq:fisher-information}
\end{equation}
For a law $P$ with positive regular density, define the marginal entropy, marginal Fisher information, and total dissipation by
\begin{equation}
 \begin{aligned}
 \MargEnt(P)&:=\Ent(P_\X\mid\mu)+\Ent(P_\Y\mid\nu),\\
 \MargFI(P)&:=\FI(P_\X\mid\mu)+\FI(P_\Y\mid\nu),\\
 \Diss(P)&:=\PFI(P\mid\pi)+\MargFI(P).
 \end{aligned}
 \label{eq:marginal-functionals}
\end{equation}
Along the flow, we abbreviate
\begin{equation}
 \begin{aligned}
 \Ent(t)&:=\Ent(P_t\mid\pi),\qquad \MargEnt(t):=\MargEnt(P_t),\\
 \MargFI(t)&:=\MargFI(P_t),\qquad \Diss(t):=\Diss(P_t).
 \end{aligned}
 \label{eq:flow-entropies}
\end{equation}
Using the generic fields from~\eqref{eq:relative-score-fields}, set
\begin{equation}
 \begin{aligned}
 R_t&:=\RelScore{P_t}{\pi},\qquad m_t:=\CondScore{P_t}{\pi},\\
 v_t&:=\nabla R_t-m_t,\qquad q_t:=\Drift[P_t]-\Drift[\pi],\\
 s_t&:=\bigl(\nabla\log(d\mu_t/d\mu),\nabla\log(d\nu_t/d\nu)\bigr).
 \end{aligned}
 \label{eq:flow-fields}
\end{equation}

\subsection{An integral inequality for the entropy}
\label{sec:entropy-differential}

We start with a decomposition of the drift of equation~\eqref{eq:dynamics}. Its proof is deferred to Subsection~\ref{sec:proofs-lemmas}.

\begingroup
\setlength{\abovedisplayskip}{4pt plus 1pt minus 1pt}
\setlength{\belowdisplayskip}{4pt plus 1pt minus 1pt}
\setlength{\abovedisplayshortskip}{2pt plus 1pt}
\setlength{\belowdisplayshortskip}{2pt plus 1pt}
\begin{lemma}
\label{lem:extended}
Use $R_t,m_t,v_t,s_t$ as defined in~\eqref{eq:flow-fields}. For $t>0$:
\begin{enumerate}[label=(\roman*),ref=(\roman*)]
\item\label{item:extended-decomposition} The Fokker--Planck equation of~\eqref{eq:dynamics} reads
\begin{equation}
 \partial_tp_t=\epsilon\Delta p_t+\nabla\!\cdot(p_tb_t)=\epsilon\nabla\!\cdot\bigl(p_t(v_t+s_t)\bigr),
 \label{eq:extended}
\end{equation}
where $b_t:=-\Drift[P_t]=\epsilon(s_t-m_t-\nabla\log\pi)$.
Here $\Drift$ is the SDE drift~\eqref{eq:drift-definition}, and
\begin{equation}
 m_t=s_t+q_t/\epsilon.
 \label{eq:score-decomposition}
\end{equation}

\item\label{item:extended-orthogonality} The fields $v_t$ and $s_t$ are orthogonal in $L^2(P_t)$, and $\MargFI(t)=\int_\Z|s_t|^2\,dP_t$.
\end{enumerate}
\end{lemma}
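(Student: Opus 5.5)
The plan is a direct computation. The only structural input is the Gibbs form \eqref{eq:pi-form} of $\pi$ together with the identity ``conditional mean of the score equals the score of the marginal''. Fix $t>0$. By Lemma~\ref{lem:asymptotic-compactness}, $p_t$ is strictly positive and $C^{1,\theta}$ on $\Z$. Hence $\mu_t,\nu_t$ have positive $C^1$ densities, obtained by integrating $p_t$ in one variable. Differentiation under the integral sign is justified by the uniform $C^1$ bound on the compact torus.

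For $\pi$, I will use \eqref{eq:pi-form} in the form $\log\pi=\phi(x)+\psi(y)-c/\epsilon-U(x)-V(y)$. The Schr\"odinger equations $\phi(x)=-\log\int_\Y e^{\psi(y)-c(x,y)/\epsilon}\,\nu(dy)$, and symmetrically for $\psi$, show that $\phi,\psi\in C^{1,\alpha_0}$. So $\pi$ is a positive $C^1$ density, and $R_t,m_t,v_t,s_t$ are well defined and continuous.

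\emph{Step 1: score identity.} For any positive $C^1$ density $p$ with marginal $p_\X$, differentiating $p_\X(x)=\int_\Y p(x,y)\,dy$ gives $\nabla_x\log p_\X=\E^{P}[\nabla_x\log p\mid X]$. This is the same computation as in Step~2 of Proposition~\ref{prop:global-plsi}. Applying it to $P_t$ gives $\E^{P_t}[\nabla_x\log p_t\mid X]=\nabla\log\mu_t$. Applying it to $\pi$, whose $\X$-marginal is $\mu=e^{-U}$, gives
\[
-\nabla U=\nabla\phi-\tfrac1\epsilon\E^{\pi}[\nabla_xc\mid X]-\nabla U,
\qquad\text{that is,}\qquad
\nabla\phi=\tfrac1\epsilon\E^{\pi}[\nabla_xc\mid X].
\]
Identifying $\nabla\phi$ in this way is the key step; the analogous statement holds for $\nabla\psi$.

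\emph{Step 2: \eqref{eq:score-decomposition}.} By definition, $m_{t,\X}=\E^{P_t}[\nabla_x\log p_t\mid X]-\E^{P_t}[\nabla_x\log\pi\mid X]$. Substituting Step~1 and using that $\nabla\phi$ and $\nabla U$ depend only on $x$, this equals
\[
\nabla\log\mu_t-\nabla\phi+\tfrac1\epsilon\E^{P_t}[\nabla_xc\mid X]+\nabla U
=\nabla\log\frac{\mu_t}{\mu}+\tfrac1\epsilon\bigl(\E^{P_t}-\E^{\pi}\bigr)[\nabla_xc\mid X].
\]
By \eqref{eq:drift-definition}, the last term is $q_{t,\X}/\epsilon$, since the $\nabla_xc$ and $\nabla U$ terms cancel in $\Drift[P_t]-\Drift[\pi]$. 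The $\Y$-component is handled symmetrically, which proves $m_t=s_t+q_t/\epsilon$.

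Next I check $b_t=-\Drift[P_t]=\epsilon(s_t-m_t-\nabla\log\pi)$. The right-hand side equals $-q_t-\epsilon\nabla\log\pi$. Its $\X$-component is
\[
-\bigl(\E^{P_t}-\E^\pi\bigr)[\nabla_xc\mid X]-\E^\pi[\nabla_xc\mid X]+\nabla_xc+\epsilon\nabla U,
\]
which is exactly $-\Drift_\X[P_t]$. The form of \eqref{eq:extended} then follows algebraically:
\[
\epsilon\Delta p_t+\nabla\cdot(p_tb_t)
=\epsilon\nabla\cdot\bigl(p_t(\nabla\log p_t-\nabla\log\pi-m_t+s_t)\bigr)
=\epsilon\nabla\cdot\bigl(p_t(v_t+s_t)\bigr).
\]
This identity is understood in the weak (distributional) sense in which the Fokker--Planck equation holds; no second derivatives of $p_t$ are needed.

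\emph{Step 3: part~\ref{item:extended-orthogonality}.} The component $s_{t,\X}$ is a function of $x$ alone, and by construction $\E^{P_t}[v_{t,\X}\mid X]=0$. Conditioning on $X$ therefore gives $\int v_{t,\X}\cdot s_{t,\X}\,dP_t=0$. The same argument in $y$ gives $\int v_{t,\Y}\cdot s_{t,\Y}\,dP_t=0$. Finally,
\[
\int|s_{t,\X}|^2\,dP_t=\int_\X\Bigl|\nabla\log\frac{d\mu_t}{d\mu}\Bigr|^2d\mu_t=\FI(\mu_t\mid\mu),
\]
and similarly for the $\Y$-component, which yields $\MargFI(t)=\int_\Z|s_t|^2\,dP_t$.

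The only delicate points are the regularity justifications: $\phi,\psi\in C^1$, differentiation under the integral, and integrability of all fields. These follow from Lemma~\ref{lem:asymptotic-compactness} and Assumption~\ref{ass:regularity}. The main conceptual step is the identification of $\nabla\phi$ in Step~1.
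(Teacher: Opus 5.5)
Your proposal is correct and follows essentially the same route as the paper. The paper's proof uses the identity $\E^{P_t}[\nabla_x\log p_t\mid X]=\nabla_x\log\mu_t$ together with the Gibbs form \eqref{eq:pi-form}, and proves part (ii) by conditioning on $X$ and on $Y$. You additionally make explicit the identification $\nabla\phi=\epsilon^{-1}\E^{\pi}[\nabla_xc\mid X]$, equivalently $\Drift[\pi]=\epsilon\nabla\log\pi$, which the paper leaves implicit in ``together with \eqref{eq:pi-form}''.
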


The marginal equations also give the following entropy estimates.

\begin{lemma}
\label{lem:marginal-dissipation}
\leavevmode We have the following estimates for marginal distributions $(\mu_t,\nu_t)$.
\begin{enumerate}[label=(\roman*),ref=(\roman*)]
\item\label{item:extended-marginal} There is a common logarithmic Sobolev constant $\lambda>0$ such that, for all $0<s\le t$,
\begin{equation}
 \begin{aligned}
 \MargEnt(t)+\epsilon\int_s^t \MargFI(u)\,du = \MargEnt(s),\quad \MargEnt(t)\le e^{-2\epsilon\lambda(t-s)}\MargEnt(s),
 \quad
 \int_s^t \MargFI(u)\,du&\ge2\lambda\int_s^t \MargEnt(u)\,du,
 \end{aligned}
 \label{eq:marginal}
\end{equation}
\item\label{item:extended-weighted} For every $0<r\le\epsilon\lambda$,
\begin{equation}
 \int_s^t e^{-r(t-u)}\MargFI(u)\,du
 \le\frac{2}{\epsilon}e^{-r(t-s)}\MargEnt(s),\qquad 0<s\le t.
 \label{eq:weighted-marginal-dissipation}
\end{equation}
\end{enumerate}
\end{lemma}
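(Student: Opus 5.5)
The marginal flows $\mu_t,\nu_t$ are decoupled from the joint law. By Lemma~\ref{lem:autonomous-marginals}, they solve the autonomous Fokker--Planck equations \eqref{eq:autonomous-marginals-existence}, with stationary laws $\mu$ and $\nu$. So the whole statement reduces to the classical entropy-dissipation theory for overdamped Langevin diffusions on $\X$ and on $\Y$, applied separately and then added.

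\textbf{Step 1 (dissipation identity).} For $u>0$, Lemma~\ref{lem:asymptotic-compactness} makes the marginal densities positive, bounded above and below, and $C^{1,\theta}$ in space. The equation for $\mu_t$ reads $\partial_t\mu_t=\epsilon\nabla_x\cdot\bigl(\mu_t\nabla_x\log(d\mu_t/d\mu)\bigr)$. Applying Lemma~\ref{lem:weak-chain-rule} on $\X$ with $Q^1=\mu_t$, $Q^2=\mu$ (stationary) and $b^1=b^2=\epsilon\nabla U$ gives
\[
 \Ent(\mu_t\mid\mu)+\epsilon\int_s^t\FI(\mu_u\mid\mu)\,du=\Ent(\mu_s\mid\mu).
\]
Equal drifts kill the cross term, and Hölder drifts suffice. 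The same holds on $\Y$, and summing yields the identity in \eqref{eq:marginal}.

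\textbf{Step 2 (log-Sobolev inequality).} Since $U,V$ are bounded on compact tori, $\mu$ and $\nu$ are bounded perturbations of the uniform measure. Holley--Stroock then gives log-Sobolev inequalities $\Ent(\alpha\mid\mu)\le\frac1{2\lambda}\FI(\alpha\mid\mu)$, and likewise for $\nu$, with a common $\lambda>0$ (take the minimum). Summing gives $\MargFI\ge2\lambda\MargEnt$ pointwise in time, which is the third claim of \eqref{eq:marginal} after integration. Combining with Step 1, $u\mapsto\MargEnt(u)$ is absolutely continuous with $\MargEnt'=-\epsilon\MargFI\le-2\epsilon\lambda\MargEnt$ a.e. Gronwall then gives the exponential decay.

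\textbf{Step 3 (weighted bound).} For \ref{item:extended-weighted}, integrate by parts using $\epsilon\MargFI(u)=-\MargEnt'(u)$:
\[
 \epsilon\int_s^te^{-r(t-u)}\MargFI(u)\,du=e^{-r(t-s)}\MargEnt(s)-\MargEnt(t)+r\int_s^te^{-r(t-u)}\MargEnt(u)\,du.
\]
Drop $-\MargEnt(t)$. Then use $\MargEnt(u)\le e^{-2\epsilon\lambda(u-s)}\MargEnt(s)$ to get
\[
 r\int_s^te^{-r(t-u)}e^{-2\epsilon\lambda(u-s)}\,du\,\MargEnt(s)\le r\,e^{-r(t-s)}\int_s^te^{-(2\epsilon\lambda-r)(u-s)}\,du\,\MargEnt(s)\le\frac{r}{2\epsilon\lambda-r}e^{-r(t-s)}\MargEnt(s)\le e^{-r(t-s)}\MargEnt(s),
\]
since $r\le\epsilon\lambda$. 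This yields the factor $2/\epsilon$.

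\textbf{Main obstacle.} The only delicate point is justifying the chain rule (Step 1) on $[s,t]$ with $s>0$ arbitrary. This is where the positive-time regularity of Lemma~\ref{lem:asymptotic-compactness} is needed to apply Lemma~\ref{lem:weak-chain-rule}. That regularity passes to the marginals by integrating $p_t$ in the other variable. One must also check that $\mu$ itself satisfies the hypotheses, which holds since $U\in C^{1,\alpha_0}$.
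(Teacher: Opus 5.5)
Your proposal is correct and follows the paper's proof essentially step for step. Both arguments apply the weak chain rule of Lemma~\ref{lem:weak-chain-rule} to each autonomous marginal equation against its invariant law, use a bounded-perturbation (Holley--Stroock) log-Sobolev inequality with a common constant followed by Gronwall, and, for item (ii), integrate by parts and bound the remainder by $r/(2\epsilon\lambda-r)\le 1$. Your remarks that the positive-time regularity passes to the marginals and that $\mu,\nu$ themselves satisfy the chain-rule hypotheses spell out details the paper leaves implicit.
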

\endgroup

\begin{proof}
\noindent\emph{We prove item~\ref{item:extended-marginal}.} By Lemma~\ref{lem:autonomous-marginals}, $\mu_t$ and $\nu_t$ satisfy~\eqref{eq:autonomous-marginals-existence}. Applying the chain-rule argument of Lemma~\ref{lem:weak-chain-rule} to each marginal and its invariant law, with Fisher information~\eqref{eq:fisher-information}, gives
$$
 \Ent(\mu_t\mid\mu)+\epsilon\int_s^t\FI(\mu_u\mid\mu)\,du=\Ent(\mu_s\mid\mu),
$$
and similarly for $\nu_t$. Adding these identities gives the equality in~\eqref{eq:marginal}. The logarithmic Sobolev inequality~\cite[Propositions~5.7.5(ii) and~5.1.6]{BakryGentilLedoux2014} gives $\MargFI\ge2\lambda\MargEnt$; integration and Gronwall's inequality yield the remaining estimates.

\smallskip
\noindent\emph{We now move on to prove item~\ref{item:extended-weighted}.} Use $\MargEnt'=-\epsilon\MargFI$ and integrate by parts:
$\int_s^t e^{-r(t-u)}\MargFI(u)\,du\allowbreak
=\epsilon^{-1}e^{-r(t-s)}\MargEnt(s)-\epsilon^{-1}\MargEnt(t)+(r/\epsilon)\int_s^t e^{-r(t-u)}\MargEnt(u)\,du$.
By~\eqref{eq:marginal}, the last integral is at most $\frac{e^{-r(t-s)}\MargEnt(s)}{2\epsilon\lambda-r}$. Dropping the nonpositive term $-\epsilon^{-1}\MargEnt(t)$ therefore bounds the left-hand side by $\epsilon^{-1}(1+r/(2\epsilon\lambda-r))e^{-r(t-s)}\MargEnt(s)$. Since $r\le\epsilon\lambda$, this proves~\eqref{eq:weighted-marginal-dissipation}.
\end{proof}

Set the cost-gradient bound
\begin{equation}
 G:=\bigl(\|\nabla_xc\|_\infty^2+\|\nabla_yc\|_\infty^2\bigr)^{1/2}.
 \label{eq:cost-gradient-size}
\end{equation}
Combining the drift decomposition with the marginal dissipation, in the notation~\eqref{eq:marginal-functionals}--\eqref{eq:flow-entropies}, gives the following inequality.

\begin{proposition}
\label{prop:entropy-integral}
For all $0<s\leq t$, it holds that 
\begin{equation}
 \Ent(t)+\epsilon\int_s^t\Diss(u)\,du
 \le \Ent(s)+\sqrt2G\int_s^t\sqrt{\Ent(u)\MargFI(u)}\,du.
 \label{eq:global-entropy-differential}
\end{equation}
\end{proposition}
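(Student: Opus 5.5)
The plan is to apply the weak chain rule, Lemma~\ref{lem:weak-chain-rule}, on $[s,t]$ with $Q^1=P_u$ and $Q^2=\pi$. Take $b^1=-\Drift[P_u]$ and $b^2=-\Drift[\pi]$; the latter is time-independent because $\pi$ is a stationary solution of the Fokker--Planck equation of \eqref{eq:dynamics}. The hypotheses hold for the following reasons.
\begin{itemize}
\item For $P_u$, Lemma~\ref{lem:asymptotic-compactness} gives positivity, two-sided bounds and uniform $C^{1,\theta}$ regularity on $[s,t]$, and the drift has the H\"older regularity shown in its proof.
\item For $\pi$, the Gibbs form \eqref{eq:pi-form} together with Assumption~\ref{ass:regularity} gives a positive $C^{1,\alpha_0}$ density. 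The potentials $\phi,\psi$ inherit $C^{1,\alpha_0}$ regularity from the Schr\"odinger equations $e^{-\phi}=\int e^{\psi-c/\epsilon}\,d\nu$.
\end{itemize}
Since $b^1-b^2=-q_u$, the lemma yields
\[
\Ent(t)=\Ent(s)-\int_s^t\int_\Z(\epsilon\nabla R_u-q_u)\cdot\nabla R_u\,dP_u\,du .
\]

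Next I rewrite the integrand using Lemma~\ref{lem:extended}. From \eqref{eq:score-decomposition} we have $q_u=\epsilon(m_u-s_u)$, hence $\epsilon\nabla R_u-q_u=\epsilon(\nabla R_u-m_u+s_u)=\epsilon(v_u+s_u)$. We also have $\nabla R_u=v_u+m_u$. The $\X$-component of $v_u$ has zero conditional mean given $X$ under $P_u$, while the $\X$-components of $m_u$ and $s_u$ depend only on $x$; the same holds for the $\Y$-components with $Y$. Therefore $v_u$ is $L^2(P_u)$-orthogonal to both $m_u$ and $s_u$, as in Lemma~\ref{lem:extended}\ref{item:extended-orthogonality}. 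This gives
\[
\int(\epsilon\nabla R_u-q_u)\cdot\nabla R_u\,dP_u
=\epsilon\int|v_u|^2dP_u+\epsilon\int s_u\cdot m_u\,dP_u
=\epsilon\,\PFI(P_u\mid\pi)+\epsilon\,\MargFI(u)+\int s_u\cdot q_u\,dP_u ,
\]
where I used $\int|v_u|^2dP_u=\PFI(P_u\mid\pi)$, $m_u=s_u+q_u/\epsilon$ and $\int|s_u|^2dP_u=\MargFI(u)$. So
\[
\Ent(t)+\epsilon\int_s^t\Diss(u)\,du=\Ent(s)-\int_s^t\int s_u\cdot q_u\,dP_u\,du,
\]
and it remains to bound the cross term.

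For the cross term, write $q_{u,\X}(x)=\E^{P_u}[\nabla_xc\mid X=x]-\E^{\pi}[\nabla_xc\mid X=x]$. This is a difference of expectations of a bounded function under the conditional laws $P_{u;\Y\mid x}$ and $\pi_{\Y\mid x}$, so Pinsker's inequality gives
\[
|q_{u,\X}(x)|\le\|\nabla_xc\|_\infty\sqrt{2\,\Ent(P_{u;\Y\mid x}\mid\pi_{\Y\mid x})}.
\]
By Cauchy--Schwarz in $L^2(P_u)$,
\[
\Bigl|\int s_{u,\X}\cdot q_{u,\X}\,dP_u\Bigr|\le\|s_{u,\X}\|_{L^2(P_u)}\,\|\nabla_xc\|_\infty\sqrt{2\,\E^{\mu_u}[\Ent(P_{u;\Y\mid x}\mid\pi_{\Y\mid x})]}.
\]
The chain rule for entropy gives $\E^{\mu_u}[\Ent(P_{u;\Y\mid x}\mid\pi_{\Y\mid x})]=\Ent(P_u\mid\pi)-\Ent(\mu_u\mid\mu)\le\Ent(u)$, and the $\Y$-component is handled in the same way. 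Summing the two components and applying Cauchy--Schwarz in $\mathbb R^2$ gives
\[
\Bigl|\int s_u\cdot q_u\,dP_u\Bigr|\le\sqrt{2\Ent(u)}\;G\,\sqrt{\MargFI(u)},
\]
with $G$ as in \eqref{eq:cost-gradient-size}. Inserting this bound into the previous identity yields \eqref{eq:global-entropy-differential}.

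I expect the main obstacle to be the justification of the chain-rule step rather than the algebra. One must check that $\pi$ meets the regularity hypotheses of Lemma~\ref{lem:weak-chain-rule}, and that all integrands, in particular $s_u$ and $m_u$, are bounded, so that the orthogonality manipulations are legitimate for each $u\in[s,t]$. Both facts should follow from the uniform $C^{1,\theta}$ bounds and positive lower bounds on $[s,t]$, which make $\nabla R_u$, $m_u$ and $s_u$ bounded continuous fields.
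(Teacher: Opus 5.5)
Your proposal is correct and follows essentially the same route as the paper. You apply the weak chain rule against the stationary $\pi$ and use the decomposition $m_u=s_u+q_u/\epsilon$ together with the orthogonality of $v_u$ to fields depending only on $x$ or $y$; this gives the exact identity with cross term $-\int s_u\cdot q_u\,dP_u$, which you then bound by Pinsker's inequality, the entropy chain rule, and Cauchy--Schwarz, exactly as the paper does. The differences are cosmetic: you expand $(\epsilon\nabla R_u-q_u)\cdot\nabla R_u$ through $v_u+s_u$ and $v_u+m_u$, and you apply Cauchy--Schwarz componentwise before summing. Your check that $\pi$ has a positive $C^{1,\alpha_0}$ density spells out a hypothesis of the weak chain rule that the paper leaves implicit.
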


\begin{proof}
\noindent\emph{Step 1.} We first prove the joint entropy identity
\begin{equation*}
 \Ent(t)+\epsilon\int_s^t\Diss(u)\,du
 =\Ent(s)-\int_s^t\int_\Z s_u\cdot q_u\,dP_u\,du.
\end{equation*}
Recall the fields~\eqref{eq:flow-fields}. Apply the weak chain rule~\eqref{eq:weak-chain-rule} of Lemma~\ref{lem:weak-chain-rule} to $P_t$, which satisfies~\eqref{eq:extended}, and the entropic optimal coupling $\pi$ given by~\eqref{eq:pi-form}, which is the stationary solution of $\partial_t\pi=\epsilon\Delta\pi+\nabla\!\cdot(b\pi)=0$, where $b:=-\Drift[\pi]=-\epsilon\nabla\log\pi$ (see the proof of Lemma~4.1 in~\cite{CLP} for similar considerations).
Since $b_u-b=-q_u$, using~\eqref{eq:score-decomposition} gives
$$
\begin{aligned}
 \Ent(t)&=\Ent(s)-\epsilon\int_s^t\int_\Z|\nabla R_u|^2\,dP_u\,du
 +\int_s^t\int_\Z q_u\cdot\nabla R_u\,dP_u\,du\\
 &=\Ent(s)-\epsilon\int_s^t\Diss(u)\,du-\int_s^t\int_\Z s_u\cdot q_u\,dP_u\,du.
\end{aligned}
$$

\smallskip
\noindent\emph{Step 2.} We then prove the estimate
\begin{equation}
 \int_\Z|q_t|^2\,dP_t\leq2G^2\Ent(t).
 \label{eq:drift-difference-bound}
\end{equation}
We prove this for the $x$-coordinate; the $y$-coordinate is analogous. Write $(P_t)_{\Y\mid x}$ and $\pi_{\Y\mid x}$ for the conditional laws given $X=x$, as in~\eqref{eq:conditional-laws}. Writing $q_t=(q_{t,\X},q_{t,\Y})$, Pinsker's inequality gives
$$
 \int_\Z|q_{t,\X}|^2\,dP_t
 \le 2\|\nabla_xc\|_\infty^2\int_\X \Ent\bigl((P_t)_{\Y\mid x}\mid\pi_{\Y\mid x}\bigr)\,d\mu_t(x).
$$
By the chain rule for entropy,
$\Ent(t)=\Ent(\mu_t\mid\mu)+\int_\X \Ent\bigl((P_t)_{\Y\mid x}\mid\pi_{\Y\mid x}\bigr)\,d\mu_t(x)\ge\int_\X \Ent\bigl((P_t)_{\Y\mid x}\mid\pi_{\Y\mid x}\bigr)\,d\mu_t(x)$.
Hence, adding the analogous estimate for $y$,
$$
 \int_\Z|q_t|^2\,dP_t
 \le 2\bigl(\|\nabla_xc\|_\infty^2+\|\nabla_yc\|_\infty^2\bigr)\Ent(t)=2G^2\Ent(t),
$$
which is the desired inequality.

\smallskip
\noindent\emph{Step 3.} Applying the Cauchy--Schwarz inequality to $\int_s^t\int_\Z s_u\cdot q_u\,dP_u\,du$, with $\MargFI$ as in~\eqref{eq:marginal-functionals}, we get from Step~1 that
$$
 \Ent(t)+\epsilon\int_s^t\Diss(u)\,du
 \leq \Ent(s)+\int_s^t\sqrt{\MargFI(u)}\left(\int_\Z|q_u|^2\,dP_u\right)^{1/2}\,du.
$$
Finally, applying the estimate~\eqref{eq:drift-difference-bound} from Step~2 yields the desired inequality.
\end{proof}

\subsection{Proof of Theorem~\ref{thm:main}}
\label{sec:uniform-entropy}

With the previous proposition, we are now ready to move to the proof of Theorem~\ref{thm:main}. We first introduce the following ``coercivity'' result, whose proof is deferred to Subsection~\ref{sec:proofs-lemmas}.

\begin{lemma}
\label{lem:compact-coercivity}
For every $s>0$, there is $\kappa_s>0$, depending only on $s$ and the
model data, such that every global weak solution satisfies
\begin{equation}
 \Diss(t)\ge\kappa_s\Ent(t),\qquad t\ge s.
 \label{eq:compact-coercivity}
\end{equation}
This statement does not require finite initial entropy.
\end{lemma}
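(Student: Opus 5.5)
We argue by compactness and contradiction, combining the local projected log-Sobolev inequality near $\pi$ with a global argument away from $\pi$. Fix $s>0$. By Lemma~\ref{lem:asymptotic-compactness}, the class
\[
\mathcal K_s:=\{P_t:\ t\ge s,\ \text{$P$ a global weak solution}\}
\]
consists of laws with densities obeying $c_s\le p\le C_{s,\theta}$ and $\|p\|_{C^{1,\theta}}\le C_{s,\theta}$. This class is relatively compact in $C^1(\Z)$ by Arzel\`a--Ascoli, and the bounds pass to limits. The same holds for the marginal densities $d\mu_t/d\mu$ and $d\nu_t/d\nu$, since $\mu,\nu$ have $C^{1,\alpha_0}$ positive densities. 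Hence it suffices to prove $\Diss(P)\ge\kappa\,\Ent(P\mid\pi)$ for all $P$ in the $C^1$-closure $\overline{\mathcal K_s}$. We split into two regimes.

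\emph{Near regime.} Suppose $\|p/\pi-1\|_{C^1}\le\delta$ for a small $\delta$. Then the marginal ratios $h=d P_\X/d\mu$ and $k=dP_\Y/d\nu$ are $C^{1}$-close to $1$, and in fact $C^{1,\sigma}$-close after interpolation with the uniform $C^{1,\theta}$ bound. Lemma~\ref{lem:bridge-smooth} then gives the bridge $Q=\Bridge{h\mu}{k\nu}=e^{a\oplus b}\pi$ with $a,b$ small in $C^{1,\sigma}$, so $\|dQ/d\pi-1\|_{C^1}\le\eta$. Using Lemma~\ref{lem:fiber-projection}, Corollary~\ref{cor:nearby-plsi}, and Proposition~\ref{prop:bridge-stability}, we get
\[
\Ent(P\mid\pi)=\Ent(P\mid Q)+\Ent(Q\mid\pi)\le\frac1{2\rho}\PFI(P\mid\pi)+C\,\MargEnt(P).
\]
Finally, the marginal log-Sobolev inequality from the proof of Lemma~\ref{lem:marginal-dissipation} gives $\MargEnt\le\MargFI/(2\lambda)$. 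Together these yield $\Ent\le\max(1/(2\rho),C/(2\lambda))\,\Diss$ with no compactness needed in this regime.

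\emph{Far regime.} Suppose $\|p/\pi-1\|_{C^1}\ge\delta$ for $P\in\overline{\mathcal K_s}$. On this set, $\Ent(P\mid\pi)\le \log(C_{s,\theta}/\min\pi)$ is bounded above, so it suffices to show $\inf\Diss>0$ there. Argue by contradiction: take $P_n$ with $\Diss(P_n)\to0$, and extract a $C^1$-convergent subsequence $P_n\to P_\infty$. Because $\nabla R$, $m$ and the marginal scores are continuous functionals of the density in $C^1$ under the uniform lower bounds, $\Diss$ is continuous along this convergence, so $\Diss(P_\infty)=0$. Then $\MargFI(P_\infty)=0$ forces the marginals to be $\mu,\nu$, and $\PFI(P_\infty\mid\pi)=0$ forces $\nabla_x R=m_\X(x)$ and $\nabla_y R=m_\Y(y)$. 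Hence $R=A(x)+D(y)$, exactly as in Step~3 of Proposition~\ref{prop:global-plsi}. Since $P_\infty\in\Pi(\mu,\nu)$, this gives $\int(e^{R}-1)R\,d\pi=0$, so $P_\infty=\pi$. This contradicts $\|p_\infty/\pi-1\|_{C^1}\ge\delta$. Therefore $\Diss\ge d_0>0$ on the far set, and $\Ent\le (C'/d_0)\Diss$ there.

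Taking $\kappa_s$ as the minimum of the two constants proves \eqref{eq:compact-coercivity}. No initial entropy enters, since everything is controlled by the positive-time bounds. The main obstacle is the bookkeeping in the near regime: closeness of $p$ to $\pi$ in $C^1$ only gives $C^1$-closeness of the marginals, while Lemma~\ref{lem:bridge-smooth} in the $C^{1,\sigma}$ setting wants $C^{1,\sigma}$-smallness. I would handle this by taking $\sigma<\theta$ and using the interpolation $\|f\|_{C^{1,\sigma}}\lesssim\|f\|_{C^1}^{1-\sigma/\theta}\|f\|_{C^{1,\theta}}^{\sigma/\theta}$, together with the uniform $C^{1,\theta}$ bound. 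A secondary point is verifying continuity of $\PFI$ under $C^1$ convergence; this follows from the explicit formula for the conditional expectations as ratios of $y$-integrals of densities that are bounded below.
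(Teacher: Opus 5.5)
Your proposal is correct and follows the paper's proof essentially step for step: compactness of the positive-time class via Lemma~\ref{lem:asymptotic-compactness}; near $\pi$, the bridge decomposition combined with Corollary~\ref{cor:nearby-plsi}, Proposition~\ref{prop:bridge-stability} and the marginal LSI; away from $\pi$, continuity of $\Diss$ together with the characterization $\Diss(P)=0 \iff P=\pi$. The differences are minor: you close in $C^1$ and recover $C^{1,\sigma}$-smallness of the marginal ratios by interpolating against the uniform $C^{1,\theta}$ bound, whereas the paper closes directly in $C^{1,\sigma}$; you argue with a contradicting sequence instead of minimizing $\Diss/\Ent$; and you write out the $R=A(x)+D(y)$ argument that the paper defers to \cite{CLP}.
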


\begin{proof}[Proof of Theorem~\ref{thm:main}]
\noindent\emph{Step 1.} We fix the reference time at $1$ and prove that there exist $A,r>0$, depending only on the model data, such that
\begin{equation}
 \Ent(t)\le A e^{-r(t-s)}\Ent(s),
 \qquad 1\le s\le t.
 \label{eq:uniform-entropy-decay}
\end{equation}
By~\eqref{eq:compact-coercivity}, $\Diss(t)\ge\kappa_1\Ent(t)$ for $t\ge1$. Recall $G$ from~\eqref{eq:cost-gradient-size} and $\lambda$ from~\eqref{eq:marginal}. Applying Young's inequality in~\eqref{eq:global-entropy-differential}, with $\tilde{C}:=G^2/(\epsilon\kappa_1)$ and $r:=\min\{\epsilon\kappa_1/2,\epsilon\lambda\}>0$, gives
$$
 \Ent(t)\le \Ent(s)-r\int_s^t \Ent(u)\,du+\tilde{C} \int_s^t \MargFI(u)\,du.
$$
The restriction $r\le\epsilon\lambda$ allows us to use the weighted marginal estimate~\eqref{eq:weighted-marginal-dissipation}. Gronwall's inequality gives, for $1\le s\le t$,
$$
 \Ent(t)\le e^{-r(t-s)}\Ent(s)+\tilde{C}\int_s^t e^{-r(t-u)}\MargFI(u)\,du.
$$
Using~\eqref{eq:weighted-marginal-dissipation} and $\MargEnt(s)\le2\Ent(s)$, which follows from the entropy chain rule in each coordinate, proves~\eqref{eq:uniform-entropy-decay} with $A:=1+4\tilde{C}/\epsilon$.

\smallskip
\noindent\emph{Step 2.} Fix $\tau>0$. The regularization bound~\eqref{eq:ws-entropy-smoothing}, for the distances~\eqref{eq:wasserstein-distances}, gives
$$
 \Ent(P_\tau\mid\pi)\le B_\tau \Wass{1}(P_0,\pi)^2\le B_\tau \Wass{2}(P_0,\pi)^2.
$$
For $\tau\ge1$, applying~\eqref{eq:uniform-entropy-decay} with $s=\tau$ yields
$$
 \Ent(t)\le A e^{-r(t-\tau)}\Ent(\tau)\le AB_\tau e^{-r(t-\tau)}\Wass{2}(P_0,\pi)^2,\qquad t\ge\tau.
$$
For $0<\tau<1$, it remains to control $[\tau,1]$. Set $S:=\sup_{\tau\le u\le1}\Ent(u)$. Equations~\eqref{eq:global-entropy-differential} and~\eqref{eq:marginal}, together with $\MargEnt(\tau)\le2\Ent(\tau)$, give $S\le\Ent(\tau)+(2G/\sqrt\epsilon)\sqrt{S\Ent(\tau)}$, hence $S\le C_{\mathrm{short}}\Ent(\tau)$ with $C_{\mathrm{short}}:=2+4G^2/\epsilon$. Combining this bound with~\eqref{eq:uniform-entropy-decay} starting at $1$ proves~\eqref{eq:main-wasserstein-bound} for all $\tau>0$, with $C_\tau=A C_{\mathrm{short}}e^r B_\tau$. The fixed reference time makes $r$ independent of $\tau$.
\end{proof}

\subsection{Proofs of Lemmas~\ref{lem:extended} and~\ref{lem:compact-coercivity}}
\label{sec:proofs-lemmas}

\begin{proof}[Proof of Lemma~\ref{lem:extended}]
\noindent\emph{We prove item~\ref{item:extended-decomposition}.}\par
Recall the fields~\eqref{eq:flow-fields}. Writing $\mu_t(x)=\int_\Y p_t(x,y)\,dy$, we have (see Section~2.3 of~\cite{CLP} for similar computations)
$$\E^{P_t}[\nabla_x\log p_t\vert X=x]=\allowbreak\mu_t(x)^{-1}\int_\Y\nabla_xp_t(x,y)\,dy=\nabla_x\log\mu_t(x),$$
and similarly in $y$. 
Together with~\eqref{eq:pi-form}, this gives~\eqref{eq:score-decomposition} and~\eqref{eq:extended}. The differentiations are justified by Lemma~\ref{lem:asymptotic-compactness}, which ensures sufficient differentiability and provides bounds for differentiation under the integral.

\smallskip
\noindent\emph{We prove item~\ref{item:extended-orthogonality}.} 
By definition, $\E^{P_t}[v_{t,\X}\vert X]=\E^{P_t}[v_{t,\Y}\vert Y]=0$. Since $s_{t,\X}$ depends only on $x$ and $s_{t,\Y}$ only on $y$, 
$$
\int_\Z s_t\cdot v_t\,dP_t=\E^{P_t}[s_{t,\X}\cdot\E^{P_t}[v_{t,\X}\vert X]+s_{t,\Y}\cdot\E^{P_t}[v_{t,\Y}\vert Y]]=0.$$ 
The identity for $\MargFI(t)$ follows by integration over the marginals.
\end{proof}

\begin{proof}[Proof of Lemma~\ref{lem:compact-coercivity}]
\noindent\emph{Step 1.} Fix $0<\sigma<\theta<\alpha_0$ and set
\begin{equation}
 \mathcal K_s:=\overline{\{\pi\}\cup\{p_t:t\ge s, P_0 \in \Pc (Z) \}}^{\,C^{1,\sigma}(\Z)}.
 \label{eq:positive-time-class}
\end{equation}
Lemma~\ref{lem:asymptotic-compactness}, in particular~\eqref{eq:compact-bounds}, and Arzel\`a--Ascoli make $\mathcal K_s$ compact, with uniform positive lower and upper bounds.

\smallskip
\noindent\emph{Step 2.} Let $\eta>0$ be sufficiently small and $p\in\mathcal K_s$ satisfy $\|p-\pi\|_{C^{1,\sigma}}<\eta$. The marginal density ratios $dP_\X/d\mu$ and $dP_\Y/d\nu$ are then close to $1$ in $C^{1,\sigma}$. By Lemma~\ref{lem:bridge-smooth}, the bridge $Q:=\Bridge{P_\X}{P_\Y}$ from~\eqref{eq:bridge-scaling} is well-defined and
$$\left\|\frac{dQ}{d\pi}-1\right\|_{C^1}\le C\|p-\pi\|_{C^{1,\sigma}}\le C\eta.$$
Choose $\eta$ small enough that Corollary~\ref{cor:nearby-plsi} and Proposition~\ref{prop:bridge-stability} apply. Lemma~\ref{lem:fiber-projection}, through~\eqref{eq:pythagorean} and~\eqref{eq:I-invariance}, gives
$$\Ent(P\mid\pi)=\Ent(P\mid Q)+\Ent(Q\mid\pi)
\,\,
\text{and} \,\, \PFI(P\mid Q)=\PFI(P\mid\pi).$$
Since $P$ and $Q$ have the same marginals, Corollary~\ref{cor:nearby-plsi}, in the form~\eqref{eq:nearby-plsi} with the projected Fisher information~\eqref{eq:projected-fisher}, gives
$$\Ent(P\mid Q)\le(2\rho)^{-1}\PFI(P\mid Q)=(2\rho)^{-1}\PFI(P\mid\pi).$$
Meanwhile, Proposition~\ref{prop:bridge-stability}, through~\eqref{eq:bridge-stability}, gives
$$
\Ent(Q\mid\pi)\le C\MargEnt(P).
$$ 
Combining these two inequalities, using~\eqref{eq:marginal-functionals} and the marginal LSI from Lemma~\ref{lem:marginal-dissipation}, yields
\begin{equation}
 \begin{aligned}
 \Ent(P\mid\pi)&\le(2\rho)^{-1}\PFI(P\mid\pi)+C\MargEnt(P)\le C_0\Diss(P),\\
 C_0&:=\max\{(2\rho)^{-1},C/(2\lambda)\},\\
 \mathcal V&:=\{p\in\mathcal K_s:\|p-\pi\|_{C^{1,\sigma}}<\eta\}.
 \end{aligned}
 \label{eq:local-coercivity}
\end{equation}

\smallskip
\noindent\emph{Step 3.} If $\PFI(P\mid\pi)+\MargFI(P)=0$, both terms vanish. Connectedness gives $P_\X=\mu$ and $P_\Y=\nu$, hence $P\in\Pi(\mu,\nu)$.
Furthermore, $P\in\Pi(\mu,\nu)$ and $\PFI(P\mid\pi)=0$ imply $P=\pi$ by the argument of Proposition~4.8 of~\cite{CLP}, which simplifies here since the densities are positive and $C^1$.
Hence $\PFI(P\mid\pi)+\MargFI(P)=0$ if and only if $P=\pi$.

\smallskip
\noindent\emph{Step 4.} By~\eqref{eq:local-coercivity}, $\inf_{P\in\mathcal V\setminus\{\pi\}}\Diss(P)/\Ent(P\mid\pi)\ge C_0^{-1}>0$. We now consider $\mathcal K_s\setminus\mathcal V$, with $\mathcal K_s$ defined in~\eqref{eq:positive-time-class}.

Use $R=\RelScore{P}{\pi}$ and $m=\CondScore{P}{\pi}$ from~\eqref{eq:relative-score-fields}. Indeed, we have that $$m_\X(x)=p_\X(x)^{-1}\int_\Y p(x,y)\nabla_xR(x,y)\,dy.$$ The uniform lower bound gives uniform convergence of these conditional means, the relative gradients, and the marginal logarithmic gradients under $C^{1,\sigma}$ convergence. Thus~\eqref{eq:projected-fisher} and~\eqref{eq:fisher-information} give continuity of $\Diss(P)$ on $\mathcal K_s$; entropy is continuous and bounded there as well.

If $\mathcal K_s\setminus\mathcal V$ is nonempty, it is compact by Step~1, and entropy is positive there since $\pi\in\mathcal V$. Hence the ratio $\Diss(P)/\Ent(P\mid\pi)$ attains a minimum $a_s\ge0$ at some $P^*\in\mathcal K_s\setminus\mathcal V$. If $a_s=0$, boundedness of entropy gives $\Diss(P^*)=0$, so Step~3 yields $P^*=\pi$, contradicting $\pi\in\mathcal V$. Therefore $a_s>0$, and $\inf_{P\in\mathcal K_s\setminus\{\pi\}}\Diss(P)/\Ent(P\mid\pi)\ge\min\{C_0^{-1},a_s\}>0$. Taking $\kappa_s:=\min\{C_0^{-1},a_s\}$ proves~\eqref{eq:compact-coercivity}; if the complement is empty, take $\kappa_s:=C_0^{-1}$.
\end{proof}

\bibliographystyle{abbrv}
\bibliography{refs}

\end{document}